\documentclass[10pt]{elsarticle}
\usepackage{standalone}

\usepackage{datetime}
\usepackage{lmodern}

\usepackage{graphicx}

\newif\ifproofs

\proofstrue

\usepackage[paper=a4paper,bottom=2.5cm,left=2.5cm,right=2.5cm,top=2.5cm]{geometry}
\usepackage{amsmath,amsfonts,amscd,amssymb,dsfont} %
\usepackage{cleveref}
\usepackage{bm}
\usepackage{textcomp}

\usepackage[shortlabels]{enumitem}

\usepackage{graphics} %

\usepackage{amsthm}

\usepackage{pgfplots}
\pgfplotsset{compat=1.11}
\usepgfplotslibrary{fillbetween}
\usetikzlibrary{intersections}
\pgfdeclarelayer{bg}
\pgfsetlayers{bg,main}

\newtheorem{theorem}{Theorem}[section]

\newtheorem{proposition}{Proposition}[section]
\newtheorem{lemma}{Lemma}[section]

\newtheorem{assumption}{Assumption}

\theoremstyle{definition}
\newtheorem{definition}{Definition}[section]
\newtheorem*{notation}{Notations}
\newtheorem{remark}{Remark}[section]

\Crefname{corollary}{Corollary}{Corollaries}
\Crefname{equation}{Eq.}{Eqs.}
\Crefname{figure}{Figure}{Figures}
\Crefname{tabular}{Tab.}{Tabs.}
\Crefname{table}{Tab.}{Tabs.}
\Crefname{lemma}{Lemma}{Lemmas}
\Crefname{theorem}{Theorem}{Theorems}
\Crefname{definition}{Definition}{Definitions}
\Crefname{section}{Section}{Sections}
\Crefname{proposition}{Proposition}{Propositions}
\Crefname{assumption}{Assumption}{Assumptions}
\Crefname{example}{Example}{Examples}

\usepackage[disable,colorinlistoftodos,bordercolor=orange,backgroundcolor=orange!20,linecolor=orange,textsize=scriptsize]{todonotes}
\usepackage{multirow}
\usepackage{pbox}
\usepackage{amsfonts}

\newcommand{\norm}[1]{\left\Vert #1\right\Vert}
\newcommand{\tnorm}[1]{\textstyle\left\Vert #1\right\Vert}

\newcommand{\txt}{\textstyle}
\newcommand{\mr}{\mathrm}

\newcommand{\x}{x}
\newcommand{\xx}{\bm{x}}
\newcommand{\y}{y}
\newcommand{\yy}{\bm{y}}

\newcommand{\zz}{\bm{z}}

\newcommand{\hf}{\hat{f}}
\newcommand{\g}{\mathbf{g}}
\newcommand{\gxx}{\mathbf{g}_{\xx}}
\newcommand{\gyy}{\mathbf{g}_{\yy}}
\newcommand{\bh}{\mathbf{h}}

\newcommand{\xag}{X}
\newcommand{\xxag}{\bm{\xag}}
\newcommand{\yag}{Y}
\newcommand{\yyag}{\bm{\yag}}
\newcommand{\zag}{Z}
\newcommand{\zzag}{\bm{\zag}}

\newcommand{\rit}{\mathbb{R}}
\newcommand{\nit}{\mathbb{N}}

\newcommand{\X}{\mathcal{X}}
\newcommand{\FX}{\bm{\X}} %
\newcommand{\FXS}{\FX_S}
\newcommand{\Sxag}{\overline{\X}} %
\newcommand{\Y}{\mathcal{Y}}

\newcommand{\I}{\mathcal{I}}

\newcommand{\GA}{\mathcal{G}(A)}

\newcommand{\M}{\mathcal{M}}

\newcommand{\Gna}{G} %
\newcommand{\GnaA}{G(A)}

\newcommand{\ww}{\bm{w}} 
\newcommand{\wwag}{\bm{W}} 
 
\newcommand{\cc}{\bm{c}} %

\newcommand{\diamX}{R} %

\newcommand{\Bdf}{L_\mathbf{f}}
\renewcommand{\th}{\theta} %
\renewcommand{\t}{t}
\renewcommand{\i}{i}

\newcommand{\thti}{_{\th,\t}}
\def\ub{\underline{b}}
\def\ob{\overline{b}}

\newcommand{\hxx}{\hat{\xx}}

\newcommand{\hxxag}{\hat{\xxag}}

\newcommand{\hz}{\hat{\zz}}

\newcommand{\hzz}{\bm{\hat{z}}}

\newcommand{\sxx}{\xx^*}  %
\newcommand{\sxxag}{\xxag^*}
\newcommand{\syy}{{\yy^*}} 

\newcommand{\sz}{{\zz^*}}

\newcommand{\eqd}{\triangleq}
\newcommand{\dth}{\,\mathrm{d}\th}

\newcommand{\nn}{\bm{n}}
\newcommand{\snu}{\nu} %
\newcommand{\esnu}{^{\snu}}

\newcommand{\dset}{\delta} %
\newcommand{\mdset}{\overline{\delta}} %

\newcommand{\duti}{d} %
\newcommand{\mduti}{\overline{d}} %

\newcommand{\stgccvut}{{\alpha}} %

\newcommand{\disth}{\sigma} %
\newcommand{\cutp}{\upsilon} %

\renewcommand{\ss}{\bm{s}}

\newcommand{\A}{\bm{P}}
\newcommand{\bb}{\bm{b}}
\newcommand{\us}{\underline{s}}
\newcommand{\os}{\overline{s}}
\newcommand{\dimp}{K} %
\renewcommand{\L}{\Lambda}
\newcommand{\mld}{\overline{\lambda}}
\newcommand{\ld}{\lambda}

\newcommand{\rlt}{\operatorname{ri}\,}
\newcommand{\rbd}{\operatorname{rbd}\,}
\newcommand{\spa}{\operatorname{span}\,}
\newcommand{\aff}{\operatorname{aff}\,}
\newcommand{\supk}{^{(k)}}
\newcommand{\bpsi}{{\psi}} %

\newcommand{\rhoz}{\rho^*} %
\newcommand{\rhoZ}{\bar{\rho}} %

\newcommand{\rhomin}{\rho} %

\newcommand{\Symp}{ {L}_\mr{S}} %

\renewcommand{\H}{\mathcal{H}} %
\newcommand{\D}{\mathcal{D}} %

\newcommand{\deriv}{\nabla} %

\journal{Journal of Mathematical Analysis and Applications}

\begin{document}

\begin{frontmatter}

\title{%
Nonatomic Aggregative Games with Infinitely Many Types
\tnoteref{t1}}

\tnotetext[t1]{This work was partially supported  by the PGMO program of \textit{Fondation Math\'ematiques Jacques Hadamard}. }

\author{Paulin Jacquot \corref{cor1}%
\fnref{fn1}}

\ead{paulin.jacquot@polytechnique.edu}

\author{Cheng Wan \fnref{fn2}}

\ead{cheng.wan@edf.fr}

\cortext[cor1]{Corresponding author}

\fntext[fn1]{Paulin Jacquot is with  EDF R\&D OSIRIS, Inria and  Ecole polytechnique, CNRS, Palaiseau, France.}

\fntext[fn2]{Cheng Wan is with EDF R\&D  OSIRIS, Palaiseau, France.}

\begin{abstract}
 We define  and analyze the notion of variational Wardrop equilibrium for nonatomic aggregative games with an infinity of players types. 
 These equilibria are characterized through an infinite-dimensional variational inequality. %
  We show, under monotonicity conditions,  a convergence theorem enables to approximate such an equilibrium with arbitrary precision.
To this end, we introduce  a sequence of nonatomic games with a finite number of players types, which approximates the initial game. We show the existence of a symmetric Wardrop equilibrium in each of these games.
     We prove that those symmetric equilibria converge to an equilibrium of the infinite game, and that they can be computed as solutions of finite-dimensional  variational inequalities. 
 The model is illustrated through an example from smart grids:  the description of a large population of electricity consumers by a parametric distribution gives a nonatomic game with an infinity of different players types, 
with actions subject to coupling constraints.
\end{abstract}

\begin{keyword} nonatomic aggregative game; coupling aggregative constraints; generalized variational inequality; monotone game;  variational equilibrium
 \end{keyword}

\end{frontmatter}

\section{Introduction}
We study the existence and uniqueness of variational Wardrop equilibrium (VWE) in nonatomic aggregative games with coupling aggregative constraints, where a continuum of players have heterogeneous compact convex pure-action sets and cost functions. 
We establish the convergence, to the VWE of such a game, of a sequence of symmetric VWE in auxiliary games with a finite number of types of players.
\medskip

\paragraph{A motivating example} Consider the example of an energy operator studying the flexibility potential between peak and off-peak periods in a large population of energy consumers, for instance all households in France.
 
The operator considers that each household $i$ has a certain quantity of energy $E_i$ that can be balanced between consumption on peak period $\x_{P,i}$ and consumption on off-peak periods $\x_{O,i}$, such that $\x_{O,i}+ \x_{P,i}= E_i$, depending on the cost (per unit of energy) $c_P(\xag_P)$ and $c_O(\xag_O) $ associated with the peak and off peak periods. 
The total on-peak consumption  $X_P= \sum_i \x_{P,i}$ and off-peak consumption $X_O=\sum_i \x_{O,i}$ affect the prices on the energy market and, therefore, change the costs $c_P(X_P)$ and $c_O(X_O)$ set by the operator. 

  The operator wants to compute an equilibrium of this game (for instance to design tariffs). 
  For practical and privacy reasons, it is impossible to have access to the flexibility potential $E_i$ of the thirty millions of French households. 
  However, the operator may have an easier access to a precise parametric, continuous  distribution function of the flexibility potential among the French households.
  
  Then, using the inverse transform sampling method, the game is replicated by modeling the population of households as a continuum $\Theta=[0,1]$ and associating to each $\th \in \Theta$ the flexible energy quantities $E_\th=F_E^{-1}(\th)$ from the inverse of the cumulative distribution function $F_E$. 
 As the distribution is continuous, there is an infinity of different energy quantity $E_\th$  i.e. an infinity of \emph{players types} in the obtained game, where a \emph{type} refers to the definition of a set of feasible actions and a payoff function.
 The operator has two questions: how to characterize an equilibrium of this nonatomic game with an infinity of players types and how to compute such an equilibrium. This paper provides answers to those two questions.

\medskip
The game described above belongs to the class of aggregative games. In such a game, a player's payoff is determined by her own action and the aggregate of all the players' actions \cite{Corchon1994}. The setting of aggregative games is particularly relevant to the study of nonatomic games \cite{schmeidler1973equilibrium}, i.e. games with a continuum of players. There, a player has an interaction with the other players only via an aggregate-level profile of their actions%
, while she has no interest or no way to know the behavior of any particular player or the identity of the player making a certain choice.

Nonatomic games are readily adapted to many situations in industrial engineering or public sectors where a huge number of users, such as traffic commuters and electricity consumers, are involved. 
These users have no direct interaction except through the aggregate congestion or consumption to which they are contributing collectively.
These situations can often be modeled as a congestion game, a special class of aggregative games, both in nonatomic version and finite-player version.
The latter, called atomic congestion game, was formally formulated by Rosenthal in 1973 \cite{rosenthal1973network}, while related research work in transportation and traffic analysis, mostly in the nonatomic version, appeared much earlier \cite{wardrop1952some,Bec56}.
 The theory of congestion games has also found numerous applications in telecommunications \cite{orda1993competitive}, distributed computing \cite{altman2002nash}, energy management \cite{atzeni2013demand}, and so on. 
 
Nonatomic games are mathematical tools adapted to the modeling  of interactions between a very large number of agents. Practical cases exist where a nonatomic model is intuitive and straightforward as when the modeler has an easier access to a description of the population through a parametric distribution of the types, as illustrated in the example. 

As many distributions used in practice (e.g. normal distribution) are continuous, this implies that the nonatomic game obtained using these distributions will have an infinite number of players' types.

The concept of equilibrium in nonatomic games is captured by the so called Wardrop equilibrium (WE) \cite{wardrop1952some}. A nonatomic player neglects the impact of her deviation on the aggregate profile of the whole population's actions, in contrast to a finite player. 

For the computation of WE, existing results are limited to particular classes of nonatomic games, such as population games \cite{MaynardSmith82,HofSig98,San11}, where only a finite number of types of players are considered, each type sharing the same finite number of pure actions and the same payoff function. 

The objective of this paper is to provide a model of nonatomic aggregative games with \emph{infinitely many compact convex pure-action sets and infinitely many payoff functions}---in general a specific action set and a specific payoff function for each nonatomic player---
then introduce a general form of coupling aggregative constraints into these games, define an appropriate notion of equilibrium, study the properties such as existence and uniqueness of these equilibria and, finally, their computation through an approximation.

\medskip

\paragraph{Main results}  
After defining a pure-action profile in a nonatomic game where players have specific compact convex pure-action sets lying in $\rit^T$, and specific cost functions, convex  in their own action variable, %
 \Cref{thm:agg_wardrop} characterizes a WE as a solution to an infinite-dimensional %
 variational inequality (IDVI). 

Using the IDVI formulation, we extend this equilibrium notion to the case of a game with coupling aggregative constraints, by defining variational Wardrop equilibrium (VWE). 
\Cref{th:exist_ve_inf} proves the existence of WE and VWE in monotone nonatomic games by showing the existence of solutions to the characteristic IDVI. 

In \Cref{th:unique_vwe}, we establish the uniqueness of WE and VWE in case of strictly monotone or aggregatively strictly monotone games. The definition of monotone games is an extension of the stable games \cite{HofSan09}, also called dissipative games \cite{sorin2015finite}, in population games with a finite types of nonatomic players to the case with infinitely many types.

 In the case where the nonatomic aggregative game has only a finite number of types of players, we define the notion of symmetric action profiles and symmetric VWE (SVWE), describing situations where all players of the same type play the same action. \Cref{prop:SVWEfiniteChar} shows that SVWEs are characterized as solutions of a \emph{finite}-dimensional VI. Besides, \Cref{prop:exist_svwe} shows that, under monotonicity assumptions, there always exists an SVWE.

\Cref{thm:converge_with_u} is the main result of this paper. 
It shows that, for a sequence of finite-type approximating games, if the finite number of pure-action sets and cost functions converge to those of  the players of a monotone nonatomic aggregative game, and if the aggregative constraint %
converges to the aggregative constraint of the infinite nonatomic game, then any sequence of SVWE associated to the  sequence of approximating games converges in pure-action profile or in aggregate action profile to the VWE of the infinite-type game.
We provide an upper bound on the distance between the approximating SVWE and the VWE, specified as a function of the parameters of the approximating finite-type games and the initial infinite nonatomic game.

This result allows the construction of a sequence of   finite-type approximating games and associated SVWEs %
 so as to approximate the infinite-dimensional VWE in the special class of strongly or aggregatively strongly monotone nonatomic aggregative games, with or without aggregative constraints. 
 Since the resolution of finite-dimensional variational inequalities---characterizing  SVWEs---is computationally tractable \cite{facchinei2007finite}, it follows from our results that a VWE of a nonatomic game with infinitely many types can be approximated with arbitrary precision.  

\Cref{subsec:construction} shows how to construct an finite-type approximating games sequence for two general classes of nonatomic games. 
\ref{app:nonsmooth} gives the main ideas to extend all our results to the case where players have nonsmooth subdifferentiable cost functions: to get easily to the key ideas, we focus on the smooth case in the body of the paper. \ref{app:nash-wardrop} explains how we can use the same arguments to show the convergence Nash equilibria of atomic finite-player games (instead of nonatomic finite-type games) to a VWE of a nonatomic game.

\medskip

\paragraph{Related work} 
Extensive research has been conducted on WE in nonatomic congestion games via their formulation with variational inequalities \cite{MarPat2007}. %
In addition to their existence and uniqueness, the computational and dynamical aspects of equilibria as solutions to variational inequalities have also been studied  \cite{Smith1984a,ZhuMarcotte1994,ZhangNa1997,CPP2002}. 
However, in most cases, the variational inequalities involved have finite dimensions, as opposed to the case of WE in this paper. 
 Marcotte and Zhu \cite{marcotte1997equilibria} consider nonatomic players with continuous types (leading to a characterization of the WE as an IDVI) and studied the equilibrium in an aggregative game with nonatomic players differentiated through a linear parameter in their cost function.

Convergence of some dynamical systems describing the evolution of pure-action distribution in the population of a non atomic game has been established for some particular equilibria in some particular classes such as linear games \cite{TayJon78}, potential games \cite{Bec56,San00} and stable games \cite{smith84stability,HofSan09}. 
Algorithms corresponding to discretized versions of such dynamical systems for the computation of WE have been studied, in particular for congestion games \cite{Friesz1994daytoday,ZhangNa1997}.

In engineering applications of nonatomic games such as the management of traffic flow or energy consumption, individual commuters or consumers often have specific choice sets due to individual constraints, and specific payoff functions due to personal preferences. 
Also, unlike for a transportation user who usually chooses a single path, an electricity consumer as modeled in the example above faces a resource allocation problem where she has to split the consumption of a certain quantity of energy over different time periods.
 Hence, her pure-action set is no longer a finite, discrete set as a commuter but a compact convex set in $\rit^T$ where $T$ is the total number of time periods. 
 Few results exist for the computation of pure-action  WE in the case where players have continuous action sets and in the case where there are infinitely many different types (i.e. action sets and payoff functions) of players. %
  For example, \cite{schmeidler1973equilibrium} shows the existence of equilibrium in nonatomic games with finite action sets. Mas-Colell \cite{MASCOLELL1984} and Carmona and Podzeck \cite{Carmona2009large} consider compact strategy sets and show the existence of mixed strategy equilibria, and do not consider the case of aggregative games and pure-strategy equilibria. In their model, all players share the same actions set. %
   Besides, most of the existing work assumes smooth cost functions of players which is somewhat restrictive in applications, as for instance electricity tariffs or tolls are usually not continuous.

Similarly, the subject of nonatomic games with (aggregative) coupling constraints has only been partially addressed. Coupling  constraints at an aggregative level are to be  considered in many of the above-mentioned applications, as also mentioned in \cite{gramm2017}:  
 for instance, when modeling the electricity consumption (see above example),  some capacity constraints of the network  or ramping constraints on the variation of total energy consumption between time periods are natural to consider from an engineering point of view.  
As seen in this paper, the presence of coupling constraints is not a simple artifact, as it adds non trivial difficulties in the analysis of WE and their computation. Indeed, an appropriate definition of equilibrium is already not obvious. 
  An analog to the so-called generalized Nash equilibrium \cite{harker1991gne} for finite-player games does not exist for nonatomic games because a nonatomic player's behavior has no impact on the aggregative profile. 
  Moreover, dynamical systems and algorithms used to compute WE in population games cannot be straightforwardly extended to this case. 
  Indeed, in these dynamics and algorithms, players adapt their strategies unilaterally in their respective strategy spaces, which can well lead to a new strategy profile violating the coupling constraint.

Several works have quantified the relationship between Nash and Wardrop equilibria, a subject close to this paper, as shown in \ref{app:nash-wardrop}. 
Haurie and Marcotte \cite{haurie1985relationship} show that in a sequence of atomic splittable games where atomic splittable players are replaced by smaller and smaller equal-size players with constant total weight, Nash equilibria converge to the WE of a nonatomic game. 
Their proof is based on the convergence of variational inequalities corresponding to the sequence of Nash equilibria, a technique similar to the one used in this paper. Wan \cite{wan2012coalition} generalizes this result to composite games where nonatomic players and atomic splittable players coexist, by allowing the atomic players to replace themselves by players with heterogeneous sizes.

Gentile et al. \cite{gentile2017nash} consider a specific class of finite-player aggregative games with linear coupling constraints. They use the variational inequality formulations for the unique generalized Nash equilibrium and the unique generalized Wardrop-type equilibrium (which consists in letting each finite player act as if she was nonatomic) of the same finite-player game to show that, when the number of players grows, the former can be approximated by the latter. Their results are different from ours, as we consider nonatomic games with players of infinitely-many different types instead of finite-player games. Consequently, we consider VWE and symmetric VWE instead of generalized equilibria, which do not exist in nonatomic games. In contrast to generalized equilibria, a variational equilibrium is not characterized by a best reply condition for each of the finite or nonatomic players,  as shown in \Cref{sec:approx_games}. We also consider a general form of coupling constraints, and extend our results to nonsmooth cost functions, as shown in \ref{app:nonsmooth} (we focus on the differentiable case in the body of the paper).
 
Milchtaich \cite{milchtaich2000generic} studies finite and nonatomic crowding games (similar to nonatomic aggregative games), where players have finitely many pure actions, and shows that, if each player in an $n$-person game is replaced by $m$ identical replicas with constant total weight, pure Nash equilibria generically converge to the unique equilibrium of the limit nonatomic game as $m$ goes to infinity. His proof is not based on a variational inequality formulation.
\medskip

\paragraph{Structure} The remaining of the paper is organized as follows. %
\Cref{sec:nonatomic} introduces the definitions of nonatomic aggregative games with and without aggregative constraints. After defining WE and VWE equilibrium in the case of an infinite number of players' types, we show, under monotonicity assumptions, the existence and uniqueness of equilibria there via generalized IDVIs. 
In the case of finite-type games, we define the notions of symmetric profiles and SVWE and show their characterization through finite-dimensional VIs and their existence.
In \Cref{sec:approx_games}, we give the definition of a sequence  of finite-type approximating games  for a nonatomic aggregative game with or without coupling constraints, and present the main theorem of the paper on the convergence of the sequence of S(V)WE, associated to the sequence of finite-type approximating games, to the (V)WE of the nonatomic game. 
In \Cref{subsec:construction}, the construction of   sequences of finite-type approximating games is shown for two important classes of nonatomic aggregative games. 
In \Cref{sec:example_energy}, we step back to the flexible energy example given above, and derive our results to the computation of an SVWE in this framework. 

Last, in \ref{app:nonsmooth}, we show how our results extend to the case of nonsmooth cost functions and in \ref{app:nash-wardrop}, we show how the results can be adapted to prove the convergence of Nash equilibria to a VWE of a nonatomic game.
\medskip
 
\paragraph{Notation} Vectors are denoted by a bold font (e.g. $\xx$) as opposed to scalars (e.g. $x$). 

The transpose of vector $\xx$ is denoted by  
 $\xx^\tau$. 

The closed  ball in a metric space, centered at $\xx$ and of radius $\eta$, is denoted by $B_\eta(\xx)$. 

For a nonempty convex set $C$ in a Hilbert space $\mathcal{H}$ (over $\rit$), 
\begin{itemize}
\item  $T_C(\xx)=\{\yy\in \mathcal{H}: \yy=0 \text{ or } \exists (\xx_k)_k \text{ in } C \text{ s.t. } \xx_k\not\equiv \xx, \xx_k\rightarrow \xx, \frac{\xx_k - \xx}{\|\xx_k-\xx\|}\rightarrow \frac{\yy}{\|\yy\|}\}$ is the \emph{tangent cone} of $C$ at $\xx\in C$; 
\item $\spa C=\{\sum_{i=1}^k \alpha_i \xx_i : k\in \nit, \alpha_i\in\rit, \xx_i\in C\}$ is the \emph{linear span} of $C$;
\item  $\aff C=\{\sum_{i=1}^k \alpha_i \xx_i : k\in \nit, \alpha_i\in\rit, \sum_i \alpha_i = 1, \xx_i\in C\}$ is the  affine hull of $C$;
\item $\rlt C=\{\xx\in C: \exists \eta>0 \text{ s.t. } B_\eta(\xx) \cap \aff C \subset C\}$ is the \emph{relative interior} of $C$;
\item $\rbd C$ is the \emph{relative boundary} of $C$ in $\mathcal{H}$, i.e. the boundary of $C$ in $\spa C$.
\end{itemize}

The \emph{inner product} of two points $\xx$ and $\yy$ in any Euclidean space $\rit^T$ is denoted by $\langle \xx, \yy \rangle= \sum_{i=1}^T x_i y_i$. The $l^2$-\emph{norm} of $\xx$ is denoted by $\norm{\xx} \eqd \langle \xx, \xx \rangle^{1/2}$. 

We denote by $L^2([0,1], \rit^T)$ the Hilbert space of  measurable functions from $[0,1]$  (equipped with the Lebesgue measure $\mu$) to $\rit^T$ that are square integrable with respect to the Lebesgue measure. The \emph{inner product} of two vector functions $F$ and $G$ is denoted by $\langle F, G \rangle_2 = \int_{0}^1 \langle F(\th), G(\th) \rangle \dth$. The Hilbert space $L^2([0,1], \rit^T)$ is endowed with \emph{$L^2$-norm}: $\|F\|_{2}=\langle F, F \rangle_{2}^{1/2}$.

The distance between a point $\xx$ and a set $A$ is denoted by $d_m(\xx,A)\eqd\inf_{\yy \in A}\norm{\xx-\yy}_m$, where $m$ is omitted or is equal to $2$, depending on whether we consider an Euclidean space or  $L^2([0,1], \rit^T)$. 

Similarly, the \emph{Hausdorff distance} between two sets $A$ and $B$ is denoted by $ d_{H,m} (A, B )$, which is  defined as $\max \{\sup_{\xx\in A}d_m(\xx,B), \sup_{\yy \in B}d_m(\yy,A)\}$.

For a function $(\xx,\xxag)\mapsto f(\xx,\xxag)$ of two explicit variables, convex in $\xx$, we denote by $\deriv_1 f(\xx, \xxag)$ the differential of function $f(\cdot,\xxag)$ for any fixed $\xxag$, except in \ref{app:nonsmooth} in which $\partial$ (resp. $\partial_1$) is used to denote the (resp. partial) subdifferential.

\section{Monotonicity, Coupling Constraints and Symmetric Equilibrium}\label{sec:nonatomic}
\subsection{Nonatomic aggregative games}
In nonatomic aggregative games considered here, players have compact pure-action sets, and heterogeneous pure-action sets as well as heterogeneous cost function. This model is in line with  Schmeidler's seminal paper \cite{schmeidler1973equilibrium}.
It differs from most of the population games studied in game theory \cite{HofSig98,San11}, in which nonatomic players are grouped into several populations, with players in the same population having the same finite pure-action set  and the same cost function.

\begin{definition}[Nonatomic aggregative game]\label{def:nonatomicGame} 
A \emph{nonatomic aggregative game} $\Gna$ is defined by:
\begin{enumerate}[i),wide]
\item a continuum of players represented by the points on the real interval $\Theta=[0,1]$ endowed with Lebesgue measure;
 \item  a set of feasible pure actions $\X_\th\subset \rit^T$ for each player $\th\in \Theta$,
 with $T\in \nit^*$ a constant; 
\item a cost function $\X_\th\times \rit^T \rightarrow \rit:  (\xx_\th, \xxag)  \mapsto f_\th(\xx_\th, \xxag)$ for each player $\th$,  where $\xxag=(\xag_t)_{t=1}^T$ and $\xag_\t\eqd \int_0^1 \xx_{\th',t}\dth'$ refers to an aggregate-action profile, given action  profile $(\xx_{\th'})_{{\th'}\in\Theta}$ for the population $\Theta$. 
\end{enumerate}

The set of feasible pure-action profiles is defined by:
\begin{equation*} %
\FX\eqd \left\{ \xx \in L^2([0,1],\rit^{T}) \   : \ \forall  \,\th \in \Theta , \xx_\th \in \X_\th \right\}. 
\end{equation*}

Denote the game by $\Gna=(\Theta, \FX, (f_\th)_{\th\in\Theta})$.
\end{definition}
\begin{remark}
The definition of a nonatomic aggregative game asks the pure-action profile $\xx$ to be a measurable and integrable function on $\Theta$ instead of simply being a collection of $\xx_\th\in \X_\th$ for $\th\in \Theta$. In other words, a coupling constraint is inherent in the definition of nonatomic aggregative games and the notion of WE. This is in contrast with finite-player games.
\end{remark}
The set of feasible aggregate actions is $\Sxag\eqd \{ \xxag \in \rit^T  : \exists\, \xx \in \FX \text{ s.t. } \txt\int_0^1 \xx_\th \dth = \xxag \}$.

Further assumptions are needed for $\FX$ to be nonempty and for the existence of an equilibrium.

\begin{assumption}[Nonatomic pure-action sets]\label{ass_X_nonat}
The correspondence $\X: \Theta\rightrightarrows \rit^T, \th \mapsto \X_\th$ has nonempty, convex, compact values. Moreover, for all $\th\in \Theta$,  $\X_\th \subset B_{\diamX}(\mathbf{0})$, with $\diamX>0$ a constant.
\end{assumption}
Under \Cref{ass_X_nonat}, a sufficient condition for $\xx$ to be in $L^2([0,1],\rit^{T})$ is that $\xx$ is measurable.  
\begin{notation}
We denote by $\M=[0,\diamX+1]^T$ the hypercube in $\rit^T$ of edge $R+1$. 
\end{notation}

\begin{assumption}[Measurability] \label{ass_measur}
 The correspondence $\X: \Theta\rightrightarrows \rit^T, \th \mapsto \X_\th$ has a measurable graph $Gr_\X =\{(\th,\xx_\th) \in \rit^{T+1}: \th \in \Theta, \xx_\th \in \X_\th  \}$, i.e. $Gr_\X$ is a Borel subset of $\rit^{T+1}$.
 
 The function $Gr_\X \rightarrow \rit^T:  (\th,\xx_\th) \mapsto  f_\th(\xx_\th, \yyag)$ is measurable for each $\yyag \in \rit^T$.
\end{assumption}

\begin{assumption}[Nonatomic convex cost functions] \label{ass_ut_nonat}  
For all $\th$, $f_\th$ is defined on $(\M')^2$, where $\M'$ is a neighborhood of  $\M$, and:
\begin{enumerate}[i),wide]
\item for each $\th\in \Theta$, function $f_\th$ is continuous. In particular, $f_\th$ is bounded on $\M^2$;
\item  for each $\th\in \Theta$ and each  aggregate profile $\yyag\in \M$, $\xx \mapsto f_\th(\xx, \yyag)$ is differentiable and convex on $\M'$;
\item there is $\Bdf>0$ such that $\tnorm{\deriv _1 f_\th(\xx_\th,\yyag)} \leq \Bdf$ for each $\xx_\th\in \M$, each $\yyag\in \M$, and each $\th\in \Theta$.
\end{enumerate}

\end{assumption}
\begin{remark}
\Cref{ass_ut_nonat}.iii) implies that $f_\th(\cdot,\cdot)$'s are Lipschitz in the first variable with a uniform Lipschitz constant $\Bdf$ on $\M^2$ for all $\th$. 
\end{remark}

We also need the continuity of the derivative of cost functions in the second (aggregate) variable:
\begin{assumption} \label{assp:gradientContinuity}
For each $\th\in \Theta$ and  each $\xx_\th\in\M$, the function $ \yyag \mapsto \deriv_1 f_\th(\xx_\th, \yyag)$ is continuous on $\M$.
\end{assumption}

Wardrop equilibrium extends the notion of Nash equilibrium in the framework of nonatomic games, where a single player of measure zero has a negligible impact on the others.
\begin{definition}[Wardrop Equilibrium (WE), \cite{wardrop1952some}]
 
A pure-action profile $\sxx\in \FX$ is a pure \emph{Wardrop equilibrium} of nonatomic aggregative game $\Gna$ if we have, with $\sxxag= \int_{\th\in\Theta} \sxx_\th \dth$:
\begin{equation*}
 f_\th (\sxx_\th, \sxxag) \leq f_\th (\xx_\th, \sxxag), \quad\forall \xx_\th \in \X_\th , \ \forall \, a.e.\,  \th \in \Theta\ .
\end{equation*}
\end{definition}
All the actions and equilibria in this paper are pure, hence from now on, we no longer emphasize it.  
\smallskip

Before characterizing WE by infinite-dimensional VI (IDVI), let us introduce some notions and a technical assumption ensuring that the IDVI is well-defined.

\begin{lemma}\label{lm:bestreply} 
 For all $\xx\in L^2([0,1],\M)$, the function  $\g_{\xx}$ defined from $\Theta$ to $\rit^T$ by
\begin{equation}\label{eq:sousgra}
\g_{\xx}(\th)\eqd \deriv_1 f_\th(\xx_\th, \txt\int \xx)\, , \quad \forall \th\in \Theta \, ,\; \forall \xx\in L^2([0,1],\M) 
\end{equation}
is measurable on $\Theta$. Consequently, $\g_{\cdot}$ is a mapping from $L^2([0,1],\M)$ to $L^2([0,1],\rit^T)$.
\end{lemma}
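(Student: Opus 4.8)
The plan is to obtain measurability of $\g_{\xx}$ by writing each of its components as a pointwise limit of measurable difference quotients, and then to deduce membership in $L^2$ from the uniform gradient bound. I would begin by checking that the expression is well posed. For $\xx \in L^2([0,1],\M)$ the aggregate $\xxag \eqd \int_0^1 \xx_\th \dth$ is well defined, since an $L^2$ function on the finite measure space $([0,1],\mu)$ is integrable; and as $\M$ is convex and compact with $\xx_\th \in \M$ for a.e.\ $\th$, the average $\xxag$ lies in $\M$ as well. By \Cref{ass_ut_nonat}.ii), $\zz \mapsto f_\th(\zz,\xxag)$ is then differentiable on $\M'$, so $\g_{\xx}(\th)=\deriv_1 f_\th(\xx_\th,\xxag)$ is well defined for each $\th$.

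The heart of the argument is to reduce measurability of $\g_{\xx}$ to joint measurability of a difference quotient. Fix a coordinate $j \in \{1,\dots,T\}$ and denote by $\mathbf{e}_j$ the $j$-th canonical vector. Since $\M'$ is a neighborhood of $\M$, there is $n_0$ such that $\zz + \tfrac1n \mathbf{e}_j \in \M'$ for all $\zz \in \M$ and $n \ge n_0$, and differentiability of $f_\th(\cdot,\xxag)$ gives, for every $\th$,
\begin{equation*}
\big(\deriv_1 f_\th(\zz,\xxag)\big)_j = \lim_{n\to\infty} n\Big[ f_\th\big(\zz + \tfrac1n \mathbf{e}_j,\, \xxag\big) - f_\th(\zz,\xxag) \Big].
\end{equation*}
A pointwise limit of measurable maps being measurable, it suffices to show that $(\th,\zz)\mapsto f_\th(\zz,\xxag)$ is jointly measurable on $\Theta \times \M'$; then the finite differences above are jointly measurable, so $(\th,\zz)\mapsto \deriv_1 f_\th(\zz,\xxag)$ is jointly measurable on $\Theta\times\M'$, and composing with the measurable map $\th \mapsto (\th,\xx_\th)$ yields measurability of $\th \mapsto \g_{\xx}(\th)$.

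The main obstacle lies in this joint measurability, and specifically in the measurability in $\th$ of $f_\th(\zz,\xxag)$ at a point $\zz \in \M'$ that may fall outside $\X_\th$: the shifted selection $\th\mapsto \xx_\th + \tfrac1n\mathbf{e}_j$ need not remain in $\X_\th$, so it runs off the graph $Gr_\X$ on which \Cref{ass_measur} provides measurability. I would handle this by viewing $(\th,\zz)\mapsto f_\th(\zz,\xxag)$ as a Carath\'eodory function on $\Theta\times\M'$: it is continuous in $\zz$ for each fixed $\th$ by \Cref{ass_ut_nonat}.i), and measurable in $\th$ for each fixed $\zz$ upon reading the measurability hypothesis of \Cref{ass_measur} on the full domain $\Theta\times\M'$ (equivalently, interpreting $f$ as a normal integrand). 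A Carath\'eodory function is jointly measurable, which is exactly what the previous step requires; this identification of the correct measurability statement, valid off the graph $Gr_\X$, is the delicate point of the proof.

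Finally, the $L^2$ conclusion is immediate. By \Cref{ass_ut_nonat}.iii) one has $\norm{\g_{\xx}(\th)} = \tnorm{\deriv_1 f_\th(\xx_\th,\xxag)} \le \Bdf$ for a.e.\ $\th$, so $\g_{\xx}$ is an essentially bounded measurable function on the finite measure space $([0,1],\mu)$ and hence belongs to $L^2([0,1],\rit^T)$. Therefore $\g_{\cdot}$ is a well-defined mapping from $L^2([0,1],\M)$ to $L^2([0,1],\rit^T)$.
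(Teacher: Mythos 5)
Your proof takes essentially the same route as the paper's: both write $\g_{\xx}$ as the pointwise limit of difference quotients $n\big(f_\th(\xx_\th+\tfrac1n\mathbf{e}_j,\int\xx)-f_\th(\xx_\th,\int\xx)\big)$, invoke \Cref{ass_measur} for measurability of each quotient, and conclude by stability of measurability under pointwise limits, with the $L^2$ membership following from the uniform bound $\Bdf$. You are in fact more careful than the paper's one-line argument, since you correctly flag that the shifted point $\xx_\th+\tfrac1n\mathbf{e}_j$ may leave the graph $Gr_\X$ on which \Cref{ass_measur} is literally stated, and you patch this by reading the measurability hypothesis on $\Theta\times\M'$ and using the Carath\'eodory (jointly measurable) property — a legitimate refinement of the intended argument.
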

\begin{proof}[Proof of \Cref{lm:bestreply}]
For $n \in \nit$ large enough, $\g_{\xx,n} \eqd \th \mapsto n \big({f_\th(\xx_\th + \frac{1}{n}, \int\xx) -f_\th(\xx_\th,\int\xx) } \big)$ is well defined. It is measurable according to \Cref{ass_measur}. Thus $ \g_{\xx} = \lim_{n} \g_{\xx,n}$ is also measurable as a limit of measurable functions.

 \end{proof}

\begin{theorem}[IDVI formulation of WE]\label{thm:agg_wardrop}%
Under \Cref{ass_X_nonat,ass_ut_nonat,ass_measur}, $\sxx \in \FX$ is a WE of nonatomic aggregative game $\Gna$ if and only if either of the following two equivalent conditions is true:
\begin{subequations}
\begin{align}
\forall \,a.e. \, \th \in \Theta, \quad & \langle \deriv_1 f_\th(\sxx_\th, \sxxag), \xx_\th - \sxx_\th \rangle \geq 0\, , \quad \forall \xx_\th \in \X_\th  \ , \label{cond:ind_opt}\\
 & \int_{\Theta} \langle \g_{\sxx}(\th), \xx_\th - \sxx_\th \rangle \dth \geq 0\, , \quad \forall \xx\in \FX\ . \label{cond:agg_eq}
 \end{align}
\end{subequations}
\end{theorem}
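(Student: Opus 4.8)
The plan is to split the double equivalence into two links: first \textbf{WE} $\iff$ \eqref{cond:ind_opt}, which is a pointwise (in $\th$) statement, and then \eqref{cond:ind_opt} $\iff$ \eqref{cond:agg_eq}, which passes between the pointwise inequalities and their integrated form. Throughout I would use that $\g_{\sxx}(\th) = \deriv_1 f_\th(\sxx_\th, \sxxag)$, so that the integrand in \eqref{cond:agg_eq} is exactly the pointwise bracket in \eqref{cond:ind_opt}, and that $\g_{\sxx}$ is well defined and measurable by \Cref{lm:bestreply}.

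For the first link I would fix $\th$ and note that by \Cref{ass_ut_nonat}.ii) the map $\xx_\th \mapsto f_\th(\xx_\th, \sxxag)$ is convex and differentiable on the neighbourhood $\M'$ of $\M$, while $\X_\th$ is convex and compact by \Cref{ass_X_nonat}. For such a convex program, the standard first-order optimality characterization states that $\sxx_\th$ minimizes $f_\th(\cdot, \sxxag)$ over $\X_\th$ if and only if $\langle \deriv_1 f_\th(\sxx_\th, \sxxag), \xx_\th - \sxx_\th \rangle \geq 0$ for all $\xx_\th \in \X_\th$. Applying this equivalence for a.e.\ $\th$ converts the defining inequality of a WE into \eqref{cond:ind_opt} and back.

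The implication \eqref{cond:ind_opt} $\implies$ \eqref{cond:agg_eq} is immediate: for any $\xx \in \FX$ one has $\xx_\th \in \X_\th$ for every $\th$, so the integrand $\langle \g_{\sxx}(\th), \xx_\th - \sxx_\th \rangle$ is $\geq 0$ for a.e.\ $\th$, and integrating over $\Theta$ preserves the sign. The converse \eqref{cond:agg_eq} $\implies$ \eqref{cond:ind_opt} is the crux, and I would argue by contradiction. Suppose \eqref{cond:ind_opt} fails, i.e.\ the set
\[
D \eqd \{\th \in \Theta : \exists\, \yy \in \X_\th,\ \langle \g_{\sxx}(\th), \yy - \sxx_\th \rangle < 0 \}
\]
has positive measure. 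Consider the marginal function $h(\th) \eqd \min_{\yy \in \X_\th} \langle \g_{\sxx}(\th), \yy - \sxx_\th \rangle$, whose minimum is attained since $\X_\th$ is compact and the bracket is continuous in $\yy$; then $D = \{\th : h(\th) < 0\}$. I would define a competing profile $\xx$ by selecting, for $\th \in D$, a minimizer $\xx_\th \in \argmin{\yy \in \X_\th}\langle \g_{\sxx}(\th), \yy - \sxx_\th \rangle$, and setting $\xx_\th \eqd \sxx_\th$ for $\th \notin D$. For this $\xx$, the integral in \eqref{cond:agg_eq} equals $\int_D h(\th)\dth < 0$, contradicting \eqref{cond:agg_eq}, so $\mu(D) = 0$.

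The main obstacle is measurability in this last step: I must ensure both that $D$ is measurable (equivalently, that $h$ is a measurable function) and that the selected profile $\xx$ lies in $\FX$, i.e.\ is a measurable, hence $L^2$, element. This is where \Cref{ass_measur} enters: the measurable graph of $\X$ together with the measurability of $\g_{\sxx}$ from \Cref{lm:bestreply} makes $h$ measurable and lets me invoke a measurable selection theorem (of Kuratowski--Ryll-Nardzewski / Aumann type) to produce a measurable minimizing selection on $D$. Boundedness of $\X_\th$ by $\diamX$ (\Cref{ass_X_nonat}) then guarantees $\xx \in L^2([0,1], \rit^T)$, so $\xx \in \FX$ and the contradiction is legitimate.
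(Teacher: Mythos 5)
Your proposal is correct and follows essentially the same route as the paper: first-order optimality for the convex program gives WE $\iff$ \eqref{cond:ind_opt}, the forward integration is immediate, and the converse is a contradiction argument resting on a measurable selection (the paper invokes the measurable maximum theorem for the Carath\'eodory function $(\th,\zz)\mapsto f_\th(\zz,\sxxag)$, exactly the role your Kuratowski--Ryll-Nardzewski/Aumann step plays). The only cosmetic difference is that you select a measurable minimizer of the linearized bracket $\yy\mapsto\langle \g_{\sxx}(\th),\yy-\sxx_\th\rangle$ over $\X_\th$, whereas the paper selects a minimizer of $f_\th(\cdot,\sxxag)$ and uses the convexity gradient inequality to conclude that the bracket is strictly negative there; both yield the same contradiction.
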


\begin{proof}[Proof of \Cref{thm:agg_wardrop}]
Given $\sxxag$, \eqref{cond:ind_opt} is a necessary and sufficient condition for $\sxx_\th$ to minimize the convex function $f_\th (., \sxxag)$ on $ \X_\th$. Condition \eqref{cond:ind_opt} implies condition \eqref{cond:agg_eq} because of \Cref{lm:bestreply}.

For the converse, suppose that $\sxx\in \FX$ satisfies condition \eqref{cond:agg_eq} but not \eqref{cond:ind_opt}. 
Then there must be a subset $\Theta'$ of $\Theta$ with strictly positive measure such that for each $\th \in \Theta'$, 
$\sxx_\th \notin \Y_\th \eqd \arg\min_{\X_\th}f_\th(\cdot,\sxxag)$.
 In particular, for any $\yy_\th \in \Y_\th$, $\langle \g_{\sxx}(\th), \, \yy_\th - \sxx_\th \rangle <f_\th(\yy_\th,\sxxag) - f_\th(\sxx_\th,\sxxag)< 0$. 
A consequence of \Cref{ass_ut_nonat,ass_measur} is that the function $\Theta \times \M \rightarrow \rit: (\theta, \zz) \mapsto f_\th(\zz,\sxxag)$ is a Carath\'eodory function, that is, (i) $f_\cdot(\zz,\sxxag)$ is measurable on $\Theta$ for each $\zz\in \M$, and (ii) $f_\th(\cdot,\sxxag)$ is continuous on $\M$ for each $\th\in \Theta$.  Thus, according to the measurable maximum theorem \cite[Thm. 18.19]{aliprantis2006infinite} applied to $f_\cdot(\cdot,\sxxag)$, there exists a selection $\yy_\th\in \arg\min_{\X_\th} f_\th(\cdot, \sxxag)$ for $\th\in \Theta'$ such that $\Theta' \rightarrow \rit^T: \th \mapsto \yy_\th$ is a measurable function. 
 By defining $\yy_\th = \sxx_\th$ for $\th\notin\Theta'$, one has $\Theta \rightarrow \rit^T: \th \mapsto \yy_\th$ is measurable and hence belongs to $\FX$. However, $\int_{\Theta} \langle \g_{\sxx}(\th), \yy_\th - \sxx_\th \rangle \dth= \int_{\Theta'} \langle \g_{\sxx}(\th), \yy_\th - \sxx_\th \rangle \dth <0  $, contradicting \eqref{cond:agg_eq}. 
\end{proof}

\begin{remark}\label{rm:tangent_infinitedim}
Condition \eqref{cond:ind_opt} is equivalent to $\langle \g_{\sxx}(\th), \yy_\th \rangle \geq 0$ for all $\yy_\th \in T_{\X_\th}(\sxx_\th)$ for each $\th$. It means that no unilateral deviation is profitable. However, since each nonatomic player has measure zero, when considering a deviation in the action profile, one must let players in a set of strictly positive measure deviate: \eqref{cond:agg_eq} means that the collective deviation of players of any set of strictly positive measure increases their cost.%
\end{remark}

The existence of WE is obtained by an equilibrium existence theorem for nonatomic games.
\begin{theorem}[Existence of a WE,  \cite{rath1992direct}]\label{thm:exist_we}
Under \Cref{ass_X_nonat}, \Cref{ass_measur} and \Cref{ass_ut_nonat}.i), if for all $\th$ and all $\yyag\in \M$, $f_\th(\cdot, \yyag)$ is continuous on $\M$, then the nonatomic aggregative game $\Gna$ admits a WE.
\end{theorem}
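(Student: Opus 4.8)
The plan is to realize a WE as a pure-strategy Cournot--Nash equilibrium of $\Gna$ and to obtain it from a fixed point at the \emph{aggregate} level, exploiting the convexification that nonatomicity provides. The key observation is that, although each individual cost $f_\th(\cdot,\xxag)$ is here only assumed continuous (\Cref{ass_ut_nonat}.i), and in particular \emph{not} convex, the set of feasible aggregates and the integral of the best-reply correspondence are automatically convex by a Lyapunov-type argument; this is exactly what makes a \emph{pure} equilibrium exist even in the absence of convex costs.

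First I would set up the aggregate domain. By \Cref{ass_X_nonat} the correspondence $\X$ has nonempty convex compact values contained in $B_\diamX(\mathbf{0})$, and by \Cref{ass_measur} it has a measurable graph; hence it admits a measurable selection (Kuratowski--Ryll-Nardzewski / Aumann), so $\FX\neq\emptyset$ and $\Sxag=\int_0^1\X_\th\dth$ is nonempty. Since the $\X_\th$ are uniformly bounded and measurable, Aumann's theorem on the integral of a correspondence yields that $\Sxag$ is convex and compact, with $\Sxag\subset\M$.

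Next, for a fixed aggregate $\xxag\in\M$, define the individual best-reply set $\B_\th(\xxag)\eqd\arg\min_{\xx_\th\in\X_\th}f_\th(\xx_\th,\xxag)$, which is nonempty and compact by \Cref{ass_ut_nonat}.i) and compactness of $\X_\th$. Exactly as in the proof of \Cref{thm:agg_wardrop}, $(\th,\zz)\mapsto f_\th(\zz,\xxag)$ is a Carath\'eodory function, so the measurable maximum theorem \cite[Thm. 18.19]{aliprantis2006infinite} makes $\th\mapsto\B_\th(\xxag)$ a measurable correspondence admitting measurable selections. I would then define the aggregate best-reply correspondence $\Phi:\Sxag\rightrightarrows\rit^T$ by
\[
\Phi(\xxag)\eqd\int_0^1 \B_\th(\xxag)\dth=\Big\{\txt\int_0^1\yy_\th\dth : \yy\text{ a measurable selection of }\B_\cdot(\xxag)\Big\}.
\]
Aumann's theorem again makes $\Phi(\xxag)$ nonempty, convex and compact, and since $\B_\th(\xxag)\subset\X_\th$ we get $\Phi(\xxag)\subset\Sxag$, so $\Phi$ maps the convex compact set $\Sxag$ into (subsets of) itself.

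The crux is to show that $\Phi$ has closed graph (equivalently, is upper hemicontinuous). I would take $\xxag^k\to\xxag$ in $\Sxag$ and $\zzag^k\in\Phi(\xxag^k)$ with $\zzag^k\to\zzag$, and prove $\zzag\in\Phi(\xxag)$. Pointwise, Berge's maximum theorem together with the joint continuity of $f_\th$ (continuity in the first variable by \Cref{ass_ut_nonat}.i) and in the aggregate by the theorem's hypothesis that $f_\th(\cdot,\yyag)$ is continuous) gives that each $\B_\th(\cdot)$ is u.h.c. in $\xxag$; the difficulty is to transfer this pointwise u.h.c. to the \emph{integrated} correspondence. I expect this integral-convergence step to be the main obstacle, and I would handle it with a multidimensional Fatou-type lemma for sequences of integrable selections (Aumann/Artstein), extracting a measurable selection $\yy_\th\in\B_\th(\xxag)$ with $\int_0^1\yy_\th\dth=\zzag$. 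Once closed-graphness is established, Kakutani's fixed point theorem applied to $\Phi$ on the convex compact set $\Sxag\subset\rit^T$ yields $\sxxag\in\Phi(\sxxag)$, i.e.\ a measurable selection $\sxx_\th\in\B_\th(\sxxag)$ with $\int_0^1\sxx_\th\dth=\sxxag$. By construction $\sxx\in\FX$ and each $\sxx_\th$ minimizes $f_\th(\cdot,\sxxag)$ over $\X_\th$, which is precisely the WE condition.
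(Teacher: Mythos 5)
Your argument is correct and is essentially the same as the paper's: the paper proves this theorem by citing Rath (1992), and Rath's direct proof is precisely the construction you give — the aggregate best-reply correspondence $\Phi$ on the convex compact set $\Sxag$, made nonempty/convex/compact-valued by Aumann's theorem on integrals of correspondences, shown u.h.c.\ via Berge plus a Fatou-type lemma for integrals, and closed by Kakutani. (One small slip: joint continuity of $f_\th$ needed for Berge comes from \Cref{ass_ut_nonat}.i), not from splitting separate continuity in each variable between that assumption and the theorem's hypothesis.)
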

\begin{proof} 
The conditions required in Remark 8 in Rath's 1992 paper \cite{rath1992direct} on the existence of WE in aggregate games are satisfied. 
\end{proof}
\begin{remark}
No convexity of $f_\th(\cdot,\yyag)$'s are needed for the existence.
\end{remark}

\subsection{Monotone Nonatomic Aggregative Games}\label{subsec:monotone}
For the  uniqueness of WE and the existence of equilibrium notion to be introduced in the next subsection for the case with coupling constraints, let us consider the following notions of \emph{monotonicity} in  nonatomic aggregative games, also sometimes referred to as \emph{stability}.

\begin{definition}\label{def:mono_non}
With notation $\gxx(\th) = \deriv_1 f_\th(\xx_\th, \txt\int \xx)$, for any $\th\in\Theta$ and any  $\xx, \yy \in L^2([0,1],\M)$, we say that the nonatomic aggregative game $\Gna$ is 
\begin{enumerate}[i),wide]
\item \emph {monotone} if
\begin{equation}\label{cd:mono_non}
\int_{\Theta} \langle \gxx(\th)  - \gyy(\th), \xx_\th - \yy_\th \rangle \dth \geq 0, \quad \forall \xx, \yy \in L^2([0,1],\M)  \ .
\end{equation}

\item \emph{strictly monotone} if the equality in \eqref{cd:mono_non} holds if and only if $\xx=\yy$ almost everywhere.

\item \emph{aggregatively strictly monotone} if the equality in \eqref{cd:mono_non} holds if and only if $\int \xx=\int \yy$.

\item \emph{strongly monotone} with modulus $\alpha$ if
\begin{equation}\label{cd:strong_mono_non}
\int_{\Theta} \langle \gxx(\th)- \gyy(\th), \xx_\th - \yy_\th \rangle \dth \geq \alpha\|\xx-\yy\|^2_2, \; \forall \xx,\yy \in L^2([0,1],\M) \ .
\end{equation}

\item \emph{aggregatively strongly monotone} with modulus $\beta$ if
\begin{equation}\label{cd:strong_agg_mono_non}
\int_{\Theta} \langle \gxx(\th)- \gyy(\th), \xx_\th - \yy_\th \rangle \dth \geq \beta\|\txt\int \xx-\txt\int \yy\|^2, \;\forall \xx,\yy \in L^2([0,1],\M)  \ .
\end{equation}
\end{enumerate}
\end{definition}
\begin{remark}
A recent paper of Hadikhanloo \cite{Hadikhanloo2017} generalizes the notion of stable games in population games \cite{HofSan09} to monotone games in anonymous games, an extension of population games with players having heterogeneous compact action sets but the same payoff function. 
 He defines the notion of monotonicity directly on the distribution of actions among the players instead of action profile as we do. The two approaches are compatible.
\end{remark}

\paragraph{Example of public products games.} An interesting example of aggregative games are given by cost functions of the form:
\begin{equation}\label{eq:common_form_cost}
f_\th(\xx_\th,\xxag)= \langle \xx_\th , \cc(\xxag) \rangle - u_\th(\xx_\th) \ ,
\end{equation}
where $\cc(\xxag)$ specifies the per-unit cost (or negative of per-unit utility) of each of the $T$ ``public products'', which is a function of  the aggregative contribution $\xxag$ to each of the ``public products''. Player $\th$'s cost (resp. negative of utility) associated to these products is scaled by her own contribution $\xx_\th$.  The function $u_\th(\xx_\th)$ measures the private utility of player $\th$ (resp. negative of private cost) for the contribution $\xx_\th$. 

For instance, in a public goods game, $-c_t (\xag_t)$ is the common per-unit payoff for using public good $t$, determined by the total contribution $\xag_t$, while $-u_\th(\xx_\th)$ is player $\th$'s private cost of supplying $\xx_\th$ to the public goods;  in a Cournot competition, $-c_t (\xag_t)$ is the common market price for product $t$, determined by its total supply $\xag_t$, while $-u_\th(\xx_\th)$ is player $\th$'s private cost of producing $x\thti$ unit of product $t$ for each product $t$; in a congestion game, $c_t(\xag_t)$ is the common per-unit cost for using arc $t$ in a network, determined by the aggregate load $\xag_t$ on arc $t$, while $u_\th(\xx_\th)$ is player $\th$'s private utility of her routing or energy consuming choice $\xx_\th$.

\begin{proposition}\label{prop:monotonemap}
Under \Cref{ass_X_nonat,ass_measur,ass_ut_nonat}, in a \emph{public products} game $\Gna$ (i.e. with cost functions of form \eqref{eq:common_form_cost}), assume that $\cc$ is monotone on $\M$ and, for each $\th$, $u_\th$ is a concave function on $\M$. Then:
\begin{enumerate}[i),wide,labelindent=4pt]
\item  $\Gna$  is a monotone game.
\item If $u_\th$ is strictly concave on $\M$ for all $\th \in \Theta$, then $\Gna$ is a strictly monotone game.
\item If $\cc$ is strictly monotone on $\M$, then $\Gna$ is an aggregatively strictly monotone game.
\item If $u_\th$ is strongly concave on $\M$ with modulus $\alpha_\th$ for each $\th \in \Theta$ and $\inf_{\th\in\Theta}\alpha_\th= \alpha>0$, then $\Gna$ is a strongly monotone game with modulus $\alpha$.
\item If  $\cc$ is strongly monotone on $\M$ with $\beta$, then $\Gna$ is an aggregatively strongly monotone game with modulus $\beta$.
\end{enumerate}
\end{proposition}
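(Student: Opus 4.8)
The plan is to reduce all five claims to a single exact identity for the monotonicity integral, obtained from the separable structure of the public-products cost. First I would observe that, for a fixed second argument, the partial differential of $f_\th(\xx_\th,\xxag)=\langle\xx_\th,\cc(\xxag)\rangle-u_\th(\xx_\th)$ is $\deriv_1 f_\th(\xx_\th,\xxag)=\cc(\xxag)-\deriv u_\th(\xx_\th)$, the differentiability of $u_\th$ being inherited from \Cref{ass_ut_nonat} applied to $f_\th$. Hence, for $\xx\in L^2([0,1],\M)$ with $\xxag\eqd\int\xx$ (and likewise $\yyag\eqd\int\yy$ for $\yy$), one has $\gxx(\th)=\cc(\xxag)-\deriv u_\th(\xx_\th)$. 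Substituting into the left-hand side of \eqref{cd:mono_non} and splitting the inner product gives
\[
\int_{\Theta} \langle \gxx(\th) - \gyy(\th),\, \xx_\th - \yy_\th \rangle \dth
= \langle \cc(\xxag) - \cc(\yyag),\, \xxag - \yyag \rangle
- \int_{\Theta} \langle \deriv u_\th(\xx_\th) - \deriv u_\th(\yy_\th),\, \xx_\th - \yy_\th \rangle \dth .
\]
The key simplification is that $\cc(\xxag)-\cc(\yyag)$ is independent of $\th$, so it pulls out of the first integral against $\int_{\Theta}(\xx_\th-\yy_\th)\dth=\xxag-\yyag$, which is exactly the first term on the right. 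Denote this first term by $A$ and the negative of the second term by $B$, so that the whole integral equals $A+B$.

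Everything then reduces to two elementary estimates handled separately. On the aggregate side, monotonicity (resp. strict, resp. $\beta$-strong monotonicity) of $\cc$ gives $A\ge0$ (resp. $A>0$ whenever $\xxag\ne\yyag$, resp. $A\ge\beta\norm{\xxag-\yyag}^2$). On the pointwise side, concavity of each $u_\th$ is equivalent to monotonicity of $-\deriv u_\th$, which yields the pointwise inequality $\langle\deriv u_\th(\xx_\th)-\deriv u_\th(\yy_\th),\,\xx_\th-\yy_\th\rangle\le0$ and hence $B\ge0$; strict concavity makes this pointwise inequality strict off the diagonal, so $B=0$ if and only if $\xx_\th=\yy_\th$ almost everywhere; and $\alpha_\th$-strong concavity makes $-u_\th$ be $\alpha_\th$-strongly convex, giving $\langle\deriv u_\th(\xx_\th)-\deriv u_\th(\yy_\th),\,\xx_\th-\yy_\th\rangle\le-\alpha_\th\norm{\xx_\th-\yy_\th}^2$, which I would integrate using $\alpha_\th\ge\alpha=\inf_\th\alpha_\th$ to obtain $B\ge\alpha\int_{\Theta}\norm{\xx_\th-\yy_\th}^2\dth=\alpha\norm{\xx-\yy}_2^2$. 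Combining these, i) is just $A+B\ge0$; ii) follows because $A+B=0$ with $A,B\ge0$ forces $B=0$, hence $\xx=\yy$ a.e., the converse being immediate since both terms vanish when $\xx=\yy$ a.e.; iii) follows because $A+B=0$ forces $A=0$, hence $\xxag=\yyag$ by strict monotonicity of $\cc$; iv) is $A+B\ge B\ge\alpha\norm{\xx-\yy}_2^2$; and v) is $A+B\ge A\ge\beta\norm{\xxag-\yyag}^2$.

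None of the individual steps is deep; the substance lies entirely in the decomposition $A+B$ and in keeping straight which hypothesis controls which term, namely that the aggregate term $A$ sees only $\cc$ while the pointwise term $B$ sees only the $u_\th$. The one genuine subtlety, which I expect to be the main point to get right, is the passage from pointwise to integral bounds in iv): the strong-monotonicity modulus must be the \emph{uniform} infimum $\alpha=\inf_{\th}\alpha_\th>0$, since without a uniform positive lower bound the pointwise strong-convexity estimates would not integrate to a genuine $L^2$ lower bound. This is precisely why the hypothesis $\inf_\th\alpha_\th=\alpha>0$ is imposed, and it is what licenses the final step $\int_{\Theta}\alpha_\th\norm{\xx_\th-\yy_\th}^2\dth\ge\alpha\norm{\xx-\yy}_2^2$.
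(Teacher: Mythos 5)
Your proof is correct and follows the same route as the paper: write $\gxx(\th)-\gyy(\th)=\cc(\xxag)-\cc(\yyag)-(\deriv u_\th(\xx_\th)-\deriv u_\th(\yy_\th))$, pull the $\th$-independent aggregate term out of the integral against $\xxag-\yyag$, and let monotonicity of $\cc$ control that term while concavity of the $u_\th$ controls the pointwise term. The paper only writes out case i) and omits ii)--v); your handling of those cases, including the uniform modulus $\alpha=\inf_\th\alpha_\th$ in iv), is exactly the intended completion.
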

\begin{proof}
i) Let $\xx,\yy\in \FX$ and $\xxag=\int \xx$, $\yyag=\int \yy$. For each $\th$, $\deriv_1 f_\th(\xx_\th, \xxag)=\cc(\xxag)-  \nabla u_\th(\xx_\th)  $.  
Then, given $\xx,\yy\in \FX$, we have %
$\langle  \nabla u_\th(\xx_\th)  - \nabla u_\th(\yy_\th) , \xx_\th - \yy_\th \rangle\leq 0$ because $u_\th$ is concave. 
Then 
$\int_0^1 \langle \deriv_1 f_\th(\xx_\th, \xxag)-\deriv_1 f_\th(\yy_\th, \yyag), \xx_\th-\yy_\th\rangle\dth= \langle \cc(\xxag)-\cc(\yyag), \xxag-\yyag\rangle - \int_0^1\langle  \nabla u_\th(\xx_\th)  - \nabla u_\th(\yy_\th), \xx_\th - \yy_\th \rangle \dth\geq 0$ because $\cc$ is monotone. Hence $\Gna$ is a monotone game. 

The proof for ii)-v) is omitted.
\end{proof}

In particular, if $\cc(\xxag)=(c_t(\xag_t))_{t\in T}$, then $\cc$ is monotone if $c_t$'s are all non-decreasing, and $\cc$ is strongly monotone if $c_t$'s are all strictly increasing.

\subsection{Aggregate constraints and VWE}\label{subsec:GWE}
Let us consider the aggregative constraint in nonatomic aggregative game $\Gna$: $\xxag\in A$,  where $A$ is a convex compact subset of $\rit^T$ such that $A\cap \Sxag \neq \emptyset$. Let $\FX(A)$ be a subset of $\FX$ defined by $\FX(A)\eqd \{\xx \in \FX: \xxag=\int \xx \in A\}$. Let us denote the nonatomic aggregative game with aggregative constraint $\xxag \in A$ by $\GnaA$.

A notion of generalized WE similar to the one of generalized Nash equilibrium in finitely-many-player games---where each player does the best she can while not violating the coupling constraints given the choices of the others \cite{harker1991gne}---is not well-defined in a nonatomic game. Indeed, since the impact of a nonatomic player's choice on the aggregative profile is negligible, the feasible action set of a nonatomic player $\th$ facing the choices of the others $\xx_{-\th}$ in a game with coupling constraint is not a well-established notion: either $\int \xx_{-\th} \in A$ then $\X_{\th}=\X$, or $\int \xx_{-\th} \notin A$ then $\X_{\th}=\emptyset$. 
Departing from an action profile in $\FX(A)$, simultaneous unilateral deviations by the players can lead to any profile in $\FX$. If only profiles in $\X(A)$ are allowed to be attained, then one lands on a notion similar to the so-called variational Nash equilibrium in finite-many-player games  \cite{harker1991gne}. 
 Indeed, the most natural notion of equilibrium with the presence of aggregative constraint is the notion of variational Wardrop equilibrium, where feasible deviations are defined on a collective basis.

\begin{definition}[Variational Wardrop Equilibrium (VWE)]\label{def:ve-infinite}
A solution to the following IDVI problem: 
 \begin{equation}\label{cond:ind_opt_ve_inf}
\text{Find } \sxx\in \FX(A) \text{  s.t.  } \int_{\Theta} \langle \g_{\sxx} (\th), \xx_\th - \sxx_\th \rangle \dth \geq 0,\quad \forall \xx \in \FX(A),
 \end{equation}
 is called a \emph{variational Wardrop equilibrium} of  $\GnaA$.%
\end{definition}

\begin{remark}[VWE in the literature]
In the literature of congestion games, the equilibrium notion characterized by VI of form \eqref{cond:ind_opt_ve_inf} but in finite dimension and with smooth cost functions has long been studied. For example, see \cite{larsson1999side,marcotte2004capacitated,correa2004capacitated,zhongal2011} and references therein.
\end{remark}

The following facts are needed for later use.
\begin{lemma} Under \Cref{ass_X_nonat,ass_measur}:\begin{enumerate}[i), wide]
\item  $\FX$ is a nonempty, convex, closed and bounded subset of  $L^2([0,1], \rit^T)$; 
\item $\FX(A)$ is a nonempty, convex and closed subset of $\FX$; \item $\Sxag$ and $A\cap \Sxag$ are nonempty, convex and compact subsets of $\rit^T$.
 \end{enumerate}
 \end{lemma}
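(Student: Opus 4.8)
The plan is to handle the three items in turn, reducing each claim to standard facts about $L^2$ spaces, bounded linear maps, and integrals of correspondences; the only genuine difficulty is the compactness asserted in item iii).

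For item i), I would obtain nonemptiness from a measurable selection argument: under \Cref{ass_measur} the correspondence $\X$ has a measurable graph and, by \Cref{ass_X_nonat}, nonempty closed convex values, so the Aumann measurable selection theorem \cite{aliprantis2006infinite} yields a measurable map $\th\mapsto\xx_\th\in\X_\th$; since $\X_\th\subset B_{\diamX}(\mathbf{0})$ this selection is bounded, hence square-integrable, giving $\xx\in\FX$. Convexity is immediate from the pointwise convexity of the $\X_\th$ in \Cref{ass_X_nonat}: a convex combination of two feasible profiles remains in $\X_\th$ for every $\th$. Boundedness follows from $\|\xx\|_2^2=\int_0^1\|\xx_\th\|^2\dth\le \diamX^2$, so that $\FX\subset B_{\diamX}(\mathbf{0})$ in $L^2$. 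For closedness, I would take a sequence $\xx^k\to\xx$ in $L^2$, pass to a subsequence converging to $\xx$ almost everywhere, and use that each $\X_\th$ is closed to conclude $\xx_\th\in\X_\th$ for a.e.\ $\th$, i.e.\ $\xx\in\FX$ after adjusting on a null set.

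For item ii), the key observation is that the aggregation map $\Phi:\xx\mapsto\int_0^1\xx_\th\dth$ is linear and continuous from $L^2([0,1],\rit^T)$ to $\rit^T$, since $\|\Phi(\xx)\|\le\int_0^1\|\xx_\th\|\dth\le\|\xx\|_2$ by Cauchy--Schwarz on the probability space $([0,1],\mu)$. Then $\FX(A)=\FX\cap\Phi^{-1}(A)$. Nonemptiness is exactly the hypothesis $A\cap\Sxag\neq\emptyset$: any $\xxag$ in this set equals $\Phi(\xx)$ for some $\xx\in\FX$, and that $\xx$ lies in $\FX(A)$. Convexity and closedness follow because $\Phi^{-1}(A)$ is convex and closed ($A$ is convex and closed, $\Phi$ linear and continuous), and intersections of convex (resp.\ closed) sets are convex (resp.\ closed), invoking item i).

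For item iii), observe that $\Sxag=\Phi(\FX)=\int_0^1\X_\th\dth$ is precisely the Aumann integral of the correspondence $\X$. Nonemptiness follows from item i), convexity from linearity of $\Phi$, and boundedness from $\|\Phi(\xx)\|\le\int_0^1\|\xx_\th\|\dth\le\diamX$. The main obstacle is compactness: $\FX$ is bounded and closed in $L^2$ but \emph{not} compact, so $\Sxag$ as its image under $\Phi$ does not automatically inherit compactness. Here I would invoke Aumann's theorem on the integral of a correspondence: since the Lebesgue measure $\mu$ is nonatomic and $\X$ is a measurable, integrably bounded correspondence with compact values (\Cref{ass_X_nonat} and \Cref{ass_measur}), the integral $\int_0^1\X_\th\dth$ is a convex and compact subset of $\rit^T$, which simultaneously delivers the convexity and the compactness of $\Sxag$. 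Finally, $A\cap\Sxag$ is nonempty by hypothesis and, being the intersection of two convex compact sets, is convex and compact.
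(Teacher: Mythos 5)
Your proof is correct and follows essentially the same route as the paper, which simply points to Aumann's measurable selection theorem for the nonemptiness of $\FX$ and $\Sxag$ and to Aumann's theorem on integrals of correspondences for the compactness of $\Sxag$, omitting the remaining routine verifications. Your write-up fills in exactly those details (a.e.\ convergence along a subsequence for closedness, continuity of the aggregation map for item ii)) and correctly identifies that compactness of $\Sxag$ cannot be deduced from the continuous image of $\FX$ alone but requires the Aumann integral result.
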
 We omit the proof and only point out that $\FX$ and $\Sxag$ are nonempty because of \Cref{ass_X_nonat} and the measurable selection theorem of Aumann \cite{aumann1969measure}, while aggregate-action set $\Sxag$ is compact by \cite[Theorem 4]{aumann1965integral}.

 \Cref{th:exist_ve_inf} shows the existence of VWE via the VI approach. %

\begin{theorem}[Existence of VWE]\label{th:exist_ve_inf}
Under \Cref{ass_X_nonat,ass_measur,ass_ut_nonat,assp:gradientContinuity}, if a nonatomic aggregative game with coupling constraint $\Gna(A)$ is monotone on $\FX(A)$, then a VWE  exists. 
\end{theorem}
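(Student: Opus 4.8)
The plan is to read the defining condition \eqref{cond:ind_opt_ve_inf} as a variational inequality for the operator $\g$ over the feasible set $\FX(A)$, posed in the Hilbert space $H=L^2([0,1],\rit^T)$, and then to invoke a classical existence theorem for variational inequalities driven by a monotone, hemicontinuous operator on a bounded, closed, convex subset of a Hilbert space (a Browder--Stampacchia type result). Such a theorem needs three ingredients: the feasible set is nonempty, convex, closed and bounded; the operator is monotone; and the operator is hemicontinuous. The first two are essentially handed to us, so the analytic core will be the continuity of the infinite-dimensional map $\xx\mapsto\g_{\xx}$.

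For the feasible set, the preceding lemma already gives that $\FX(A)$ is nonempty, convex and closed, and \Cref{ass_X_nonat} makes it bounded, since every $\xx\in\FX(A)$ obeys $\|\xx\|_2\le\diamX$; because $H$ is reflexive and $\FX(A)$ is bounded, no coercivity is required. By \Cref{lm:bestreply}, $\g$ is a well-defined map from $\FX(A)$ into $H$, taking values in a ball of radius $\Bdf$. Monotonicity of $\g$ on $\FX(A)$ is exactly the standing hypothesis that $\Gna(A)$ is monotone on $\FX(A)$, i.e. inequality \eqref{cd:mono_non}.

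The key step is hemicontinuity. Fix $\xx,\yy\in\FX(A)$ and set $\xx_t=\xx+t(\yy-\xx)$ for $t\in[0,1]$; I would show that $t\mapsto\langle\g_{\xx_t},\zz\rangle$ is continuous for every $\zz\in H$. As $t\to t_0$ one has $\xx_{t,\th}\to\xx_{t_0,\th}$ for every $\th$ and $\int\xx_t\to\int\xx_{t_0}$ in $\rit^T$, so it suffices that $(\xx_\th,\yyag)\mapsto\deriv_1 f_\th(\xx_\th,\yyag)$ be jointly continuous; then $\g_{\xx_t}(\th)\to\g_{\xx_{t_0}}(\th)$ pointwise, the uniform bound $\|\deriv_1 f_\th\|\le\Bdf$ from \Cref{ass_ut_nonat}.iii) supplies a constant (hence integrable) dominating function on $\Theta=[0,1]$, and dominated convergence closes the argument. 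The same ingredients give full norm-continuity of $\g$ through an a.e.-convergent subsequence, but hemicontinuity is all the existence theorem requires.

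The one point deserving care, and the main obstacle I anticipate, is that \Cref{assp:gradientContinuity} furnishes only continuity of $\yyag\mapsto\deriv_1 f_\th(\xx_\th,\yyag)$ for fixed $\xx_\th$, while convexity and differentiability (\Cref{ass_ut_nonat}.ii) give continuity in $\xx_\th$ for fixed $\yyag$; separate continuity does not automatically yield the joint continuity used above. This gap is closed by a standard convex-analysis argument: the gradients $\deriv_1 f_\th(\xx^{(n)}_\th,\yyag^{(n)})$ are bounded by $\Bdf$, so any subsequential limit $\g^*$ satisfies, by passing to the limit in the subgradient inequality and using the joint continuity of $f_\th$ itself (\Cref{ass_ut_nonat}.i), $f_\th(\xx_\th+s\bm{d},\yyag)\ge f_\th(\xx_\th,\yyag)+s\langle\g^*,\bm{d}\rangle$ for all directions $\bm{d}$ and small $s>0$; letting $s\to0$ and invoking differentiability forces $\g^*=\deriv_1 f_\th(\xx_\th,\yyag)$, whence the whole sequence converges. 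With joint continuity in hand the three ingredients are verified and the existence theorem delivers a solution $\sxx\in\FX(A)$ of \eqref{cond:ind_opt_ve_inf}, which is exactly a VWE. A more self-contained alternative, avoiding any black-box citation, would pair this hemicontinuity with Minty's lemma and a finite-dimensional Galerkin approximation of $\FX(A)$, passing to the weak limit by monotonicity.
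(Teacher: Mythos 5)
Your proposal is correct and follows essentially the same route as the paper: both reduce the statement to an existence theorem for monotone, hemicontinuous variational inequalities on the nonempty, convex, closed, bounded set $\FX(A)\subset L^2([0,1],\rit^T)$ (the paper cites Ding's Corollary~2.1 for generalized VIs, you cite a Browder--Stampacchia type result), and both verify hemicontinuity along line segments by the same convex-analysis argument — passing to the limit in the subgradient inequality using the joint continuity of $f_\th$ and the uniform bound $\Bdf$ to identify the limiting gradient. Your explicit handling of the separate-versus-joint continuity issue for $\deriv_1 f_\th$ is exactly the content of the paper's verification of upper hemicontinuity of $\g_\cdot$.
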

\begin{proof}
We can apply \cite[Corollary 2.1]{ding1996monotoneGVI} which shows that \Cref{cond:ind_opt_ve_inf} has a solution, as:
\begin{itemize}[wide]
\item $\FX(A)$ is bounded, closed and convex in $L^2([0,1],\rit^T)$;
\item$\g_.:L^2([0,1],\M) \rightrightarrows L^2([0,1],\rit^T)$ is a monotone correspondence which is upper hemicontinuous from the line segments in $\FX(A)$ to the weak* topology of $L^2([0,1],\rit^T)$.  Notice that $\g_.$ has closed values. Let us do the proof in the general nonsmooth case.

 Take $\xx$ and $\yy$ in $\FX(A)$, consider sequence $(\xx\supk)_k$ with $\xx\supk \in [\xx,\yy] $ with $\xx\supk \rightarrow \xx$, and sequence $(\g\supk)_k$ such that $\g\supk\in \g_{\xx\supk}$ and $\g\supk \stackrel{\ast}{\rightharpoonup} \g^\infty$ with $\g^\infty\in L^2([0,1],\rit^T)$. Let us show that $\g^\infty\in \gxx$. 
We have $\xxag=\int \xx$ converging to $\xxag\supk=\int\xx\supk$ in $l^2$-norm.

By definition of $\g_.$ and convexity, for each $\zz\in \M$, for each $\th$, $f_\th (\zz_\th, \xxag\supk_\th) \geq f_\th (\xx\supk_\th, \xxag\supk) + \langle \g\supk_\th, \zz_\th-\xx\supk_\th \rangle$. Since $f_\th$ is continuous in both variables, $f_\th (\zz_\th, \xxag\supk)\rightarrow f_\th (\zz_\th, \xxag)$ and $f_\th (\xx_\th\supk, \xxag\supk)\rightarrow f_\th (\xx_\th, \xxag)$. Besides, $\langle \g\supk_\th, \zz_\th-\xx\supk_\th \rangle = \langle \g\supk_\th, \zz_\th-\xx_\th \rangle + \langle \g\supk_\th, \xx_\th-\xx\supk_\th \rangle$, and $\langle \g\supk_\th, \zz_\th-\xx_\th \rangle \rightarrow \langle \g^\infty_\th, \zz_\th-\xx_\th \rangle$ because  $\g\supk \stackrel{\ast}{\rightharpoonup} \g^\infty$, while $ \langle \g\supk_\th, \xx_\th-\xx\supk_\th \rangle\rightarrow 0$ because $\g\supk_\th$'s are uniformly bounded by $\Bdf$. Therefore, $f_\th (\zz_\th, \xxag) \geq f_\th (\xx_\th\supk, \xxag) + \langle \g_\th, \zz_\th-\xx_\th \rangle$ so that $\g^\infty_\th\in \partial_1 f_\th(\xx_\th, \xxag)$. Since the limit of measurable functions is measurable,  $\g$ is measurable. Hence $\g^\infty\in \gxx$  (and $\g^\infty=  \gxx$ in the smooth case), which concludes.
\end{itemize}  
\end{proof}

\begin{theorem}[Uniqueness of VWE]\label{th:unique_vwe}%
Under \Cref{ass_X_nonat,ass_measur,ass_ut_nonat}: 
\begin{enumerate}[i),wide]
\item if  $\Gna(A)$ is strictly monotone on $\FX(A)$, then it has at most one VWE; 
\item if $\Gna(A)$ is aggregatively strictly monotone on $\FX(A)$, then all VWE of $\Gna(A)$ have the same aggregative profile;
  \item if $\Gna$ (without aggregative constraint) is only aggregatively strictly monotone but, for each $\th\in \Theta$ and all $\yyag\in \M$, $f_\th(\xx,\yyag)$ is strictly convex in $\xx$, then there is at most one WE.
\end{enumerate}
\end{theorem}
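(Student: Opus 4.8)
The plan is to run the classical two-solution comparison argument for monotone variational inequalities, exploiting the convexity of $\FX(A)$ established in the preceding lemma so that each VWE is an admissible test profile in the defining IDVI \eqref{cond:ind_opt_ve_inf} of the other. For part i), I would let $\sxx$ and $\syy$ be two VWE of $\Gna(A)$. Using $\syy$ as the test profile in the inequality satisfied by $\sxx$ gives $\int_{\Theta} \langle \g_{\sxx}(\th), \syy_\th - \sxx_\th \rangle \dth \geq 0$, and symmetrically $\int_{\Theta} \langle \g_{\syy}(\th), \sxx_\th - \syy_\th \rangle \dth \geq 0$. Adding these two inequalities and rearranging the cross terms yields $\int_{\Theta} \langle \g_{\sxx}(\th) - \g_{\syy}(\th), \sxx_\th - \syy_\th \rangle \dth \leq 0$. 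Since monotonicity forces the very same integral to be nonnegative, it must vanish, and strict monotonicity (the equality case in \eqref{cd:mono_non}) then yields $\sxx = \syy$ almost everywhere, so there is at most one VWE.

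Part ii) follows the identical algebra up to the point where the monotonicity integral is shown to equal zero; the only change is that aggregatively strict monotonicity replaces the conclusion $\sxx = \syy$ a.e.\ by $\int \sxx = \int \syy$, i.e.\ all VWE share a common aggregate profile $\sxxag = \syyag$.

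Part iii) would combine the aggregate-level uniqueness of part ii), now applied to $\Gna$ \emph{without} the constraint (where WE are exactly the solutions of the IDVI by \Cref{thm:agg_wardrop}), with the pointwise optimality characterization \eqref{cond:ind_opt}. Aggregative strict monotonicity gives $\sxxag = \syyag$ for any two WE $\sxx, \syy$. Then \eqref{cond:ind_opt} says that, for almost every $\th$, both $\sxx_\th$ and $\syy_\th$ minimize the \emph{same} convex function $f_\th(\cdot, \sxxag)$ over $\X_\th$; strict convexity of this map in its first argument makes the minimizer unique, forcing $\sxx_\th = \syy_\th$ for almost every $\th$ and hence uniqueness of the WE.

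The computations are routine, so the only genuine care points are bookkeeping ones: one must respect the orientation of the IDVI (the VWE is written with the $\geq 0$, minimization convention) when combining the two variational inequalities, and one must invoke convexity of $\FX(A)$ to justify admissibility of the swapped test profiles. The single new ingredient relative to parts i)--ii) is the final upgrade in part iii) from equality of aggregate profiles to equality of the full action profiles, which is precisely where strict convexity of $f_\th(\cdot,\yyag)$ enters; this is the step I expect to require the most attention, since without it aggregative strict monotonicity alone only controls $\sxxag$, not $\sxx$ itself.
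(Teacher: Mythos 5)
Your proposal is correct and follows essentially the same route as the paper: the two-solution comparison of the defining variational inequalities, addition to force the monotonicity integral to vanish, and, for part iii), the combination of aggregate uniqueness with strict convexity of $f_\th(\cdot,\sxxag)$ to pin down each $\sxx_\th$ as the unique minimizer. No gaps.
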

\begin{proof}
Suppose that $\xx,\yy\in \FX(A)$ are both VWE. Let $\xxag=\int\xx$ and $\yyag=\int \yy$. According to Theorem \ref{thm:agg_wardrop}, we have $ \int_{\Theta} \langle \gxx(\th), \yy_\th - \xx_\th \rangle \dth \geq 0$ and $\int_{\Theta} \langle\gyy(\th), \xx_\th - \yy_\th \rangle \dth \geq 0$. Adding up these two inequalities yields $\int_{\Theta}\langle \gxx(\th) -\gyy(\th), \yy_\th - \xx_\th \rangle \dth \geq 0$. 

i) If $\Gna(A)$ is a strictly monotone game, then $\int_{\Theta}\langle  \gxx(\th) -\gyy(\th), \xx_\th - \yy_\th \rangle \dth = 0$ and thus $\xx=\yy$ almost everywhere.  

ii)-iii) If $\Gna(A)$ is an aggregatively strictly monotone game, then $\int_{\Theta}\langle  \gxx(\th) -\gyy(\th), \xx_\th - \yy_\th \rangle \dth = 0$ and thus $\xxag=\yyag$.  

If there is no aggregative constraint and $f_\th(\cdot,\zzag)$ is strictly convex for all $\zzag\in \M$, then for all $\th$, $\xx_\th$ (resp. $\yy_\th$) is the unique minimizer of $f_\th(\cdot, \xxag)$ (resp. $f_\th(\cdot, \yyag)$). Since $\xxag=\yyag$, one has $\xx_\th=\yy_\th$.
\end{proof}

\subsection{Symmetric VWE with a finite number of types}\label{subsec:SVWE}

A particular class of nonatomic aggregative games is those with only a finite number of types of players, that is, when the sets $\{ \X_\th\}_\th$ and $\{f_\th\}_\th$ are both finite. Consider a nonatomic aggregative game with a set of $I$ types $\I=\{1, \ldots, I\}$.  The player set $\Theta$ is divided into $I$ measurable subsets $\Theta_1, \ldots, \Theta_I$ such that each nonatomic player belonging to $\Theta_i$ is of type $i$. 
Let us denote the common action set of  players in $\Theta_i$ by $\X_i$ and their common cost functions by $f_i$.

Let us consider a particular class of action profiles in these finite-type nonatomic aggregative games, called \emph{symmetric action profiles}:

\begin{definition}[Symmetric action profile and symmetric variational Wardrop equilibrium (SVWE)]
The set of symmetric action profiles, denoted by $\FXS$, is the set of action profiles where players of the same type play the same action:
\begin{equation*}
\FXS \eqd \{\x \in \FX: \xx_\th =\xx_\xi, \forall \th, \xi \in \Theta_i, \forall i\in \I\}
\end{equation*} 
The set of symmetric action profiles satisfying the aggregative constraint is denoted by  
\begin{equation} \label{def:FXSA}
\FXS(A)\eqd \FXS \cap \FX(A)\, .
\end{equation}

A symmetric variational Wardrop equilibrium is a VWE that is symmetric. 
\end{definition}

For any symmetric action profile $\xx \in \FXS$, let the common action of players of type $i\in \I$ be denoted by $\xx_i$, so that the action profile can be specified by $(\xx_i)_{i\in \I}$. Obviously, for $\xx\in \FXS$, for each type $i\in \I$, $\gxx (\th) = \gxx (\xi)= \deriv_1 f_i(\xx_i, \txt\int \xx)$ for all $\th, \xi \in \I$. Let  us abusively denote this quantity by $\gxx(i)$.

\begin{proposition}\label{prop:SVWEfiniteChar}
In a finite-type nonatomic aggregative game $\Gna(A)$ with an aggregative constraint, a VWE is a symmetric one if and only if it is  a solution to the following VI:
\begin{equation}\label{eq:def-pseudo}
 \text{Find } \hxx\in \FXS(A) \text{ s.t. }  \txt\sum_{i\in \I}\langle \g_{\hxx}(i), \mu_i \xx_i - \mu_i\hxx_i \rangle\geq 0,\; \forall \xx\in \FXS(A)\ ,
 \end{equation}
where $\mu_i$ is the Lebesgue measure of $\Theta_i$.
\end{proposition}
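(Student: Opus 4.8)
The plan is to establish the equivalence ``$\hxx$ is a symmetric VWE $\iff$ $\hxx$ solves \eqref{eq:def-pseudo}'', and both implications will rest on one elementary identity. If $\hxx\in\FXS(A)$ is symmetric, then $\g_{\hxx}(\cdot)$ is constant on each $\Theta_i$, equal to $\g_{\hxx}(i)=\deriv_1 f_i(\hxx_i,\txt\int\hxx)$, and $\hxx_\th=\hxx_i$ there. Hence for \emph{any} test profile $\xx\in\FX(A)$, symmetric or not, the IDVI functional decomposes type by type as
\begin{equation*}
\int_{\Theta}\langle \g_{\hxx}(\th), \xx_\th-\hxx_\th\rangle\dth = \sum_{i\in\I}\Big\langle \g_{\hxx}(i),\ \txt\int_{\Theta_i}\xx_\th\dth - \mu_i\hxx_i\Big\rangle .
\end{equation*}
I would record this identity first; its point is that, once $\hxx$ is symmetric, the left-hand side depends on $\xx$ only through the type-averages $\frac1{\mu_i}\int_{\Theta_i}\xx_\th\dth$.

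For the direction ``symmetric VWE $\Rightarrow$ \eqref{eq:def-pseudo}'', assume $\hxx$ satisfies the VWE inequality \eqref{cond:ind_opt_ve_inf} for all $\xx\in\FX(A)$. Since $\FXS(A)\subset\FX(A)$, it holds in particular for symmetric test profiles $\xx\in\FXS(A)$, for which $\int_{\Theta_i}\xx_\th\dth=\mu_i\xx_i$; substituting into the identity turns the inequality into $\sum_{i\in\I}\langle\g_{\hxx}(i),\mu_i\xx_i-\mu_i\hxx_i\rangle\geq0$ for all $\xx\in\FXS(A)$, which is exactly \eqref{eq:def-pseudo}.

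The converse ``\eqref{eq:def-pseudo} $\Rightarrow$ VWE'' is the substantive part and uses a symmetrization argument. Let $\hxx\in\FXS(A)$ solve \eqref{eq:def-pseudo}; it is symmetric by definition, so it remains to upgrade the inequality from symmetric profiles to all of $\FX(A)$. Given an arbitrary $\xx\in\FX(A)$, define its type-averaged profile $\bar{\xx}$ by $\bar{\xx}_\th=\bar{\xx}_i\eqd\frac1{\mu_i}\int_{\Theta_i}\xx_{\th'}\dth'$ for $\th\in\Theta_i$. The crux is to check $\bar{\xx}\in\FXS(A)$: it is symmetric and measurable (a step function); each $\bar{\xx}_i$ lies in $\X_i$ because $\X_i$ is convex and compact and $\bar{\xx}_i$ is an integral average of points of $\X_i$ (a separating-hyperplane argument); and $\bar{\xx}$ carries the same aggregate as $\xx$, since $\int\bar{\xx}=\sum_{i\in\I}\mu_i\bar{\xx}_i=\sum_{i\in\I}\int_{\Theta_i}\xx_{\th'}\dth'=\int\xx\in A$. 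Applying the identity of the first paragraph once to $\xx$ and once to $\bar{\xx}$ shows that both give the same value $\sum_{i\in\I}\langle\g_{\hxx}(i),\mu_i\bar{\xx}_i-\mu_i\hxx_i\rangle$, which is $\geq0$ since $\hxx$ solves \eqref{eq:def-pseudo} and $\bar{\xx}\in\FXS(A)$. Therefore $\int_{\Theta}\langle\g_{\hxx}(\th),\xx_\th-\hxx_\th\rangle\dth\geq0$ for every $\xx\in\FX(A)$, i.e.\ $\hxx$ is a (symmetric) VWE.

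The main obstacle is the feasibility of the symmetrized profile, precisely the claim $\bar{\xx}_i\in\X_i$, which is where the convexity and compactness of the action sets in \Cref{ass_X_nonat} are used; preservation of the aggregate constraint and of the functional value are then immediate bookkeeping, relying only on $\g_{\hxx}$ being piecewise constant across types --- a direct consequence of the symmetry of $\hxx$. Notably, no monotonicity hypothesis is needed for this characterization.
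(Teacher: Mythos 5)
Your proposal is correct and follows essentially the same route as the paper's own proof: the same type-by-type decomposition of the IDVI functional, the restriction to symmetric test profiles for the easy direction, and the same averaging/symmetrization argument (with $\bar{\xx}_i=\frac{1}{\mu_i}\int_{\Theta_i}\xx_\th\dth\in\X_i$ by convexity) for the converse. Your version is merely slightly more explicit, e.g.\ in checking that the symmetrized profile still satisfies the aggregate constraint $\int\bar{\xx}=\int\xx\in A$, a step the paper leaves implicit.
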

\begin{proof}
Since $\FXS(A)\subset \FX(A)$, it is clear that a SVWE, characterized as a solution to the IDVI \eqref{cond:ind_opt_ve_inf}, is a solution to \eqref{eq:def-pseudo}.

Conversely, suppose that $\hxx$ is a solution to the VI problem \eqref{eq:def-pseudo}, let us show that it also solves the IDVI \eqref{cond:ind_opt_ve_inf}. Indeed, for all $\xx \in \FX(A)$,
 \begin{align*}
 \int_{\Theta} \langle \g_{\hxx}(\theta), \xx_\th - \hxx_\th \rangle \dth 
& =\txt \sum_{i\in \I}    \int_{\Theta_i} \langle \g_{\hxx}(\theta), \xx_\th - \hxx_\th \rangle \dth = \sum_{i\in \I} \langle \g_{\hxx}(i), \int_{\Theta_i} \xx_\th - \mu_i \hxx_i \rangle \\
& = \txt  \sum_{i\in \I} \langle \g_{\hxx}(i), \mu_i \tfrac{\int_{\Theta_i} \xx_\th}{\mu_i} - \mu_i  \hxx_i \rangle \geq 0
 \end{align*}
as for all $\th\in \Theta_i$, $\xx_{\th} \in \X_i$ which is convex, hence $\tfrac{\int_{\Theta_i} \xx_\th}{\mu_i}\in \X_i$, so that \eqref{eq:def-pseudo} can be applied.
\end{proof}

\begin{proposition}[Existence of SVWE]\label{prop:exist_svwe}
  Under \Cref{ass_X_nonat,ass_ut_nonat,assp:gradientContinuity}, a finite-type nonatomic aggregative game $\Gna(A)$  admits a SVWE. 
\end{proposition}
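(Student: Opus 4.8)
The plan is to invoke the characterization of \Cref{prop:SVWEfiniteChar}, which reduces the existence of an SVWE to the existence of a solution of the finite-dimensional variational inequality \eqref{eq:def-pseudo} over $\FXS(A)$. Representing a symmetric profile through its common values $(\xx_i)_{i\in\I}\in\prod_{i\in\I}\X_i$, this VI is posed on the set $\FXS(A)=\{(\xx_i)_i: \xx_i\in\X_i,\ \sum_{i\in\I}\mu_i\xx_i\in A\}\subset(\rit^T)^I$ with the operator $F:(\xx_i)_i\mapsto(\mu_i\,\deriv_1 f_i(\xx_i,\sum_{j\in\I}\mu_j\xx_j))_{i\in\I}$. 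I would then apply the classical existence result for finite-dimensional variational inequalities---a continuous map on a nonempty compact convex set admits a solution, via Brouwer's theorem (see \cite{facchinei2007finite})---so the whole argument reduces to checking that $\FXS(A)$ is nonempty, convex and compact, and that $F$ is continuous.

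For the geometry of $\FXS(A)$, convexity is immediate since each $\X_i$ and $A$ are convex (\Cref{ass_X_nonat}) and the aggregate map $(\xx_i)_i\mapsto\sum_{i\in\I}\mu_i\xx_i$ is linear, while compactness follows because $\FXS(A)$ is a closed subset (the aggregate constraint being a closed condition) of the compact product $\prod_{i\in\I}\X_i$. Nonemptiness is where I would use \Cref{ass_X_nonat} together with the integral formula of Aumann \cite{aumann1965integral}: for a finite-type game the feasible aggregate set is $\Sxag=\int_\Theta\X_\th\dth=\sum_{i\in\I}\mu_i\X_i$ (the integral of a constant convex-valued correspondence over $\Theta_i$ being $\mu_i\X_i$), which coincides exactly with the set of aggregates attainable by symmetric profiles; hence the standing assumption $A\cap\Sxag\neq\emptyset$ yields a symmetric profile whose aggregate lies in $A$, i.e. $\FXS(A)\neq\emptyset$.

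The crux is the continuity of $F$, that is, the joint continuity of $(\xx_i,\yyag)\mapsto\deriv_1 f_i(\xx_i,\yyag)$ on $\M^2$, since the hypotheses only provide continuity in each variable separately (in $\yyag$ from \Cref{assp:gradientContinuity}, in $\xx_i$ from differentiability and convexity of $f_i(\cdot,\yyag)$). I would obtain joint continuity by a subgradient-limit argument, which is the single-valued specialization of the closedness property already established in the proof of \Cref{th:exist_ve_inf}: given $(\xx^n_i,\yyag^n)\to(\xx_i,\yyag)$, the gradients $\deriv_1 f_i(\xx^n_i,\yyag^n)$ are uniformly bounded by $\Bdf$ (\Cref{ass_ut_nonat}.iii), so any subsequence has a further subsequence converging to some $\g^*$; passing to the limit in the subgradient inequality $f_i(\zz,\yyag^n)\geq f_i(\xx^n_i,\yyag^n)+\langle\deriv_1 f_i(\xx^n_i,\yyag^n),\zz-\xx^n_i\rangle$ and using the joint continuity of $f_i$ (\Cref{ass_ut_nonat}.i) shows that $\g^*$ is a subgradient of the convex function $f_i(\cdot,\yyag)$ at $\xx_i$; differentiability then forces $\g^*=\deriv_1 f_i(\xx_i,\yyag)$, and since every subsequence shares this limit, the full sequence converges. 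This yields continuity of $F$ on $\FXS(A)$ (where the aggregate of any feasible profile stays in $\Sxag$, on which the cost data are defined), completing the application of the finite-dimensional existence theorem.

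I expect the joint-continuity step to be the only genuine obstacle; the remaining facts about $\FXS(A)$ are routine, and once $F$ is known to be continuous the conclusion is a direct citation of the standard VI existence theorem. It is worth noting that, in contrast to \Cref{th:exist_ve_inf}, no monotonicity is required here, since finite dimensionality allows Brouwer's fixed-point theorem to replace the monotone-operator machinery used in the infinite-dimensional setting.
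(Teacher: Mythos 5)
Your proof is correct and follows essentially the same route as the paper: reduce via \Cref{prop:SVWEfiniteChar} to the finite-dimensional VI \eqref{eq:def-pseudo_fd} on the nonempty compact convex set $\FXS(A)'$ and invoke the classical existence theorem for VIs with a continuous operator (the paper cites Hartman--Stampacchia where you cite the Brouwer-based result from \cite{facchinei2007finite}, which is the same tool). Your subgradient-limit argument for the \emph{joint} continuity of $(\xx_i,\yyag)\mapsto\deriv_1 f_i(\xx_i,\yyag)$ is a worthwhile elaboration of a step the paper attributes directly to \Cref{assp:gradientContinuity}, which by itself only gives continuity in the aggregate variable.
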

\begin{proof}
First note that the VI problem \eqref{eq:def-pseudo} is equivalent to a finite dimension VI:
\begin{equation}\label{eq:def-pseudo_fd}
\text{Find } \hxx\in \FXS(A)' \text{ s.t. }  \txt\sum_{i\in \I}\langle \deriv_1 f_i(\hxx_i, \txt\sum_{j\in \I} \mu_j \hxx_j) , \mu_i \xx_i - \mu_i\hxx_i \rangle\geq 0,\; \forall \xx\in \FXS(A)'\ ,
\end{equation}
where $\FXS(A)'$ is the finite-dimensional set \begin{equation*}
\FXS(A)' \eqd \{\xx\in \rit^{IT}: \xx\in \txt\prod_{i\in \I} \X_i, \ \sum_{i\in \I} \mu_i \xx_i \in A\}\ .
\end{equation*}

As %
  the mapping $ (\xx_i)_{i\in\I} \mapsto \big(\deriv_1 f_\i(\xx_\i, \sum_j \mu_j \hxx_j) \big)_{i\in\I}$ is continuous from \Cref{assp:gradientContinuity}, 
then \cite[Lemma 3.1]{hartman1966} implies that the VI \eqref{eq:def-pseudo_fd} has a solution on the finite dimensional convex compact $\FXS(A)'$.
\end{proof}

In this paper, only SVWE are considered for finite-type nonatomic aggregative games. For such equilibria, the distribution of different types on $\Theta$ is not relevant, since the equilibrium behavior of each player is only determined by the finite dimensional VI  \eqref{eq:def-pseudo_fd}. Therefore, we shall specify a finite-type nonatomic aggregative game only by the tuple $\big((\mu_i)_{i\in \I},(\X_i)_{i\in \I},(f_i)_{i\in \I},A \big)$. In particular, a symmetric action profile in such a game shall be specified by a vector $(\x_i)_{i\in \I} \in \rit^{IT}$, and the set of symmetric action profiles is nothing else but $\FXS(A)'$.

\section{Approximating an Infinite-type nonatomic aggregative game}
\label{sec:approx_games}
\subsection{Finite-type approximating game sequence}

After introducing (V)WE in nonatomic aggregative games and SVWE in finite-type nonatomic aggregative games with coupling constraints, we  study the relation between these notions.  
As %
 WE is a particular case of VWE when the  aggregate constraint set  is  any subset of $\rit^T$ containing $\Sxag$, we can only consider the case of %
  VWE and SVWE.

This section shows the following result: considering a sequence of equilibria of ``approximating'' finite-type nonatomic aggregative games $(\Gna\esnu(A\esnu))_\snu$ of a nonatomic aggregative game $\Gna(A)$,  where each type of players in $ \Gna\esnu(A\esnu) $ corresponds to a collection of nonatomic players who are similar in their types,
 a sequence of SVWE in $(\Gna\esnu(A\esnu))_\snu$ %
 converges to the VWE of $\GnaA$ when this one is (aggregatively) strongly monotone.

The particularity of SVWE is that it can be characterized by a finite dimensional VI. 
As opposed to the case of infinite dimensional ones, there is a large literature on algorithms for computing solutions of finite dimensional VI (e.g. \cite{facchinei2007finite} and references therein). %
Therefore, the result stated above can be practically used to compute a VWE, solution of an IDVI, with arbitrary precision.

In this section, we always suppose that  \Cref{ass_X_nonat,ass_measur,ass_ut_nonat,assp:gradientContinuity} hold. %
\medskip

Let us consider the following definition of an approximating game sequence:

\begin{definition} \label{def:approx_seq} \textbf{Finite-type Approximating Games   Sequence } \\
A sequence of finite-type nonatomic aggregative games $\{\Gna\esnu(A\esnu)=
\big((\mu\esnu_i)_{i\in \I\esnu},(\X\esnu_i)_{i\in \I\esnu},(f\esnu_i)_{i\in \I\esnu},A\esnu \big) 
: \snu\in \nit^*\}$ with aggregative constraints is a \emph{finite-type approximating game sequence} for the nonatomic aggregative game $\Gna(A)=\big(\Theta,\FX,(f_\th)_\th, A\big)$ with an aggregative constraint if, for each $\snu \in \nit^*$, there exists a partition $(\Theta_0\esnu,\Theta_1\esnu, \dots, \Theta_{I\esnu}\esnu)$ of the set $\Theta$, with  $\I\esnu\eqd \{1, \dots, I\esnu\}$, such that the Lebesgue measure of $\Theta_0\esnu$ is $ \mu_0\esnu= 0$, and if, for each $\i\in \I\esnu$, the Lebesgue measure of $\Theta_\i\esnu$ is $\mu\esnu_i$ while the collection of nonatomic players in $\Theta_\i\esnu$ are getting close to the nonatomic players of type $i\in \I\esnu$ in the sense that, as $\nu \rightarrow +\infty$:
\begin{enumerate}[i),leftmargin=*,wide,labelindent=5pt]
\item  \label{def:approx_seq:subgradientAtomicNonatomic} $\mdset\esnu\eqd\max_{\i \in\I\esnu} \dset_\i\esnu \longrightarrow 0$, where $\dset_\i\esnu$ is the Hausdorff distance  between the action sets of nonatomic players in $\Theta\esnu_\i$ in $\Gna(A)$ and the action set of nonatomic players of type $i\in \I\esnu$ in $\Gna\esnu(A\esnu)$:
\begin{equation} \label{eq:def_dset}
\dset_\i\esnu \eqd \sup_{\th \in \Theta\esnu_\i} d_{H}\left( \X_\th, \X_{\i}\esnu \right)\ ,
\end{equation}
and  $\spa \X\esnu_i = \spa \X_\th, \ \forall \th\in \Theta\esnu_i$. 
\item \label{def:approx_seq:distSets}$\mduti\esnu\eqd\max_{\i\in\I\esnu}\duti_\i\esnu \longrightarrow 0  $, where $\duti\esnu_\i$ measures the %
distance 
between the differential of nonatomic players' cost functions in $\Gna(A)$ and that of their corresponding players' cost functions in $\Gna\esnu(A\esnu)$:
\begin{equation}\label{eq:def_dut}
\duti\esnu_i \eqd  \sup_{\th \in \Theta_\i}\sup_{(\xx,\yyag)\in \M^2}  \norm{   \deriv_1 f\esnu_\i( \xx_i, \yyag) - \deriv_1 f_\th(\xx_{\th}, \yyag) }.
\end{equation}
\item \label{def:approx_seq:distAggSets} $D\esnu \longrightarrow 0$, where $D\esnu\eqd d_H\left(A\esnu,A \right)$  is the Hausdorff distance between the aggregative constraint set $A\esnu\subset \rit^T$ and the aggregative constraint set $A\subset \rit^T$. Besides, $\spa A=\spa A\esnu$ for all $\snu\in \nit^*$.
\end{enumerate} 
\end{definition}
\medskip
Roughly speaking, along a sequence of finite-type approximating games, for each nonatomic player $\theta$ in $\Theta$, the difference between her type and her corresponding type $i$ in the approximating game $\Gna\esnu(A)$ (in the sense that $\theta\in \Theta\esnu_i$) is disappearing as $\snu$ goes to infinity. Also, the aggregate-profile constraint sets of the sequence of approximating games converge to the one in $\Gna(A)$.

Note that, except the last condition on $D\esnu$, the other conditions are independent of the constraint sets $(A\esnu)_\snu$ and $A$.

\begin{remark}
The assumption $ \spa \X\esnu_i = \spa \X_\th, \ \forall \th\in \Theta\esnu_i$ is needed for our proofs because  of the existence of coupling constraints. 
It implies in particular that the nonatomic infinite game considered is such that $\{\spa \X_\th \}_{\th\in\Theta}$ has a finite number of elements. This assumption is natural as, in many models, $\spa \X_\th$ will be the same for all $\th\in\Theta$ (see example of \Cref{sec:example_energy}).

\end{remark}

\begin{remark}
Without loss of generality, we assume $\rlt (A\cap \Sxag)\neq \emptyset$ in this section. Indeed, if the nonempty convex compact set $A\cap \Sxag$ has an empty relative interior, then it is reduced to a point hence the problem becomes trivial. 
\end{remark}

In  \Cref{subsec:construction}, we will construct a sequence of finite-type approximating games  for two fairly general cases of nonatomic aggregative games.

\medskip

In order to compare symmetric action profiles in the approximating games and action profiles in the original game, we introduce the following linear mappings which define an equivalent action profile for a symmetric action profile in an approximating game, and vice versa.

First, define the set $\Symp^{\I\esnu} \eqd \{\xx \in L^2([0,1],\M) :  \xx_\th =\xx_\xi, \forall \th, \xi \in \Theta_i, \forall i\in \I\esnu \}  $
and the mapping $\bpsi\esnu: L^2([0,1],\M) \rightarrow   \Symp^{\I\esnu} $  for each $\snu\in \nit^*$ by 
 \begin{equation}\label{eq:psibar}
 \forall \xx\in L^2([0,1],\M^T), \;
 \bpsi\esnu(\xx)= \big(\,\bpsi\esnu_\th(\xx) \big)_{\th\in\Theta}, \, \text{ where } \forall i\in\I\esnu ,\  \forall \th\in \Theta\esnu_i, \ \bpsi\esnu_\th(\xx)= \tfrac{\int_{\Theta\esnu_\i} \xx_\xi \mr{d} \xi}{\mu\esnu_i} \ .
 \end{equation}
 The interpretation of $\bpsi\esnu$ is that a nonatomic player $\th\in \Theta_i$ (type $i$) adopts the average behavior of players in $\Theta_i\esnu$.%
 
 \medskip 
In the following, we assume that \Cref{ass_X_nonat,ass_ut_nonat,assp:gradientContinuity} also hold for each game $G\esnu$ of a sequence of finite-type approximating games: this appears naturally in many cases if $(G\esnu)_\snu$ is built from $G$, as seen in \Cref{subsec:construction}.  
Finally, let us make the following additional assumption for this section.
\begin{assumption}\label{ass:intpt}
There is a strictly positive constant $\eta$ and an action profile $\bar{\xx}\in \FX$ such that, for almost all $\th\in \Theta$, $d(\bar{\xx}_\th, \rbd \X_\th)>\eta$.
\end{assumption}
It means that the action space of each player has an (aggregatively) nonempty relative interior and that the relative interior is not vanishing along any sequence of players.

\subsection{Convergence of equilibrium profiles and aggregate equilibrium profiles}\label{sec:atomic_with_u}

The following \Cref{thm:converge_with_u} gives the main result of this paper. It shows that a VWE in a strongly monotone nonatomic aggregative game can be approximated by SVWE of a finite-type approximating games sequence, both in the case with and without aggregative constraints. 

Recall that, according to \Cref{th:unique_vwe}, a strongly monotone game is strictly monotone, hence the VWE is unique, while an aggregatively strongly monotone game is aggregatively strictly monotone, hence the aggregate-action profile at VWE is unique. %
\begin{theorem}[Convergence of SVWE to VWE] \label{thm:converge_with_u}
Under \Cref{ass_X_nonat,ass_measur,ass_ut_nonat,assp:gradientContinuity,ass:intpt}, let $(\Gna\esnu(A\esnu))_\snu$ be a sequence of finite-type approximating games for the nonatomic aggregative game $\Gna(A)$ with an aggregative constraint $A$. 
 Let  $\sxx$ be the VWE of $\GnaA$, $\hxx\esnu \in \FX\esnu(A\esnu)$ an SVWE of $\Gna\esnu(A\esnu)$  for each $\snu\in\nit^*$, and $\sxxag$, $\hxxag\esnu$ their respective aggregate-action profiles. 
 Then, there exists a constant  $\rhomin>0$ such that the following results hold with $K_A\eqd\tfrac{\diamX+1}{\rhomin}$:
  \begin{enumerate}[i),wide]
 \item If $\Gna$ is aggregatively strongly monotone with modulus $\beta$, $(\hxxag\esnu)_\snu$ converges to $\sxxag$: for all $\snu\in \nit^*$ such that $\max(\mdset\esnu, D\esnu) < \rhomin $,
\begin{equation} \label{eq:cvg_agg_nou}
\| \hxxag\esnu- \sxxag \|^2 \leq \frac{1}{\beta}  \Big(  (4\Bdf + 1) K_A\max(D\esnu,\mdset\esnu) + (2M+1)\mduti\esnu \Big) .
\end{equation}
\item If $\Gna$ is strongly monotone with modulus $\alpha$, then 
 $(\hxx\esnu)_\snu$ ,  converges to $\sxx$ in $L^2$-norm: for all $\snu\in \nit^*$ such that $\max(\mdset\esnu, D\esnu) < \rhomin $,
 \begin{equation} \label{eq:cvg_indiv}
\| \hxx-\sxx\|^2_2\leq \frac{1}{ \stgccvut} \Big( (4\Bdf +1) K_A\max(D\esnu,\mdset\esnu)  +(2M+1)\mduti\esnu \Big)\ .%
\end{equation}
 \end{enumerate}
If there are no aggregate constraints, one can replace $K_A$ (resp. $D\esnu$) by $\tfrac{1}{2}$ (resp. 0) in \eqref{eq:cvg_agg_nou} and \eqref{eq:cvg_indiv}, and \Cref{ass:intpt} is no longer required.
\end{theorem}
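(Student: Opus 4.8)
The plan is to confront the two variational characterizations of the two equilibria and glue them together with monotonicity. The VWE $\sxx$ satisfies the IDVI \eqref{cond:ind_opt_ve_inf} over $\FX(A)$, while by \Cref{prop:SVWEfiniteChar} each SVWE $\hxx\esnu$ satisfies, writing $\g\esnu$ for the gradient field built from the approximating cost functions $f\esnu_i$, the inequality $\int_\Theta \langle \g\esnu_{\hxx\esnu}(\th), \xx_\th - \hxx_\th\esnu\rangle\dth \geq 0$ for every symmetric profile feasible in $\Gna\esnu(A\esnu)$. The difficulty is that $\sxx$ and $\hxx\esnu$ live in different feasible sets ($\FX(A)$ versus $\FX\esnu(A\esnu)$) and are scored by different gradients ($\g$ versus $\g\esnu$), so neither can be used directly as a test profile in the other's inequality. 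I will therefore build two feasibility-correcting profiles: a profile $\tilde{\hxx}\esnu\in\FX(A)$ obtained from $\hxx\esnu$, and a \emph{symmetric} profile $\hat{\sxx}\esnu$ feasible in $\Gna\esnu(A\esnu)$ obtained from the type-average $\bpsi\esnu(\sxx)$.

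Applying strong monotonicity \eqref{cd:strong_mono_non} to the pair $(\hxx\esnu,\sxx)$ gives $\alpha\|\hxx\esnu-\sxx\|_2^2 \leq T_1 - T_2$, where $T_1=\int_\Theta\langle\g_{\hxx\esnu},\hxx\esnu-\sxx\rangle\dth$ and $T_2=\int_\Theta\langle\g_{\sxx},\hxx\esnu-\sxx\rangle\dth$. For $T_2$ I insert $\tilde{\hxx}\esnu$: testing the VWE inequality at $\tilde{\hxx}\esnu$ yields $\int_\Theta\langle\g_{\sxx},\tilde{\hxx}\esnu-\sxx\rangle\dth\geq0$, hence $-T_2\leq \Bdf\|\hxx\esnu-\tilde{\hxx}\esnu\|_2$ after bounding the remainder with \Cref{ass_ut_nonat}.iii). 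For $T_1$ I first replace $\g$ by $\g\esnu$, paying an error $\leq 2\diamX\,\mduti\esnu$ controlled by \eqref{eq:def_dut} and $\|\hxx\esnu-\sxx\|_2\leq 2\diamX$, and then test the SVWE inequality at $\hat{\sxx}\esnu$. The crucial simplification here is that, $\hxx\esnu$ being symmetric, $\g\esnu_{\hxx\esnu}$ is constant on each $\Theta_i\esnu$, so $\int_\Theta\langle\g\esnu_{\hxx\esnu},\hat{\sxx}\esnu-\sxx\rangle\dth$ depends on $\sxx$ only through its type-averages $\bpsi\esnu(\sxx)$; the intra-type oscillation of $\sxx$ cancels and only $\|\hat{\sxx}\esnu-\bpsi\esnu(\sxx)\|_2$ survives. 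This gives $T_1\leq \Bdf\|\hat{\sxx}\esnu-\bpsi\esnu(\sxx)\|_2 + 2\diamX\,\mduti\esnu$.

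The main obstacle, and the geometric heart of the proof, is to construct $\tilde{\hxx}\esnu$ and $\hat{\sxx}\esnu$ and to bound the two displacements $\|\hxx\esnu-\tilde{\hxx}\esnu\|_2$ and $\|\hat{\sxx}\esnu-\bpsi\esnu(\sxx)\|_2$ by $K_A\max(D\esnu,\mdset\esnu)$. Each construction is a two-step correction: first project each type-action onto the correct action set, which costs at most $\mdset\esnu$ per player by \eqref{eq:def_dset} and the hypothesis $\spa\X\esnu_i=\spa\X_\th$, displacing the aggregate by at most $\mdset\esnu$; then drag the resulting aggregate back into the target constraint set by a convex combination toward an interior profile. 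This is exactly where \Cref{ass:intpt} and $\rlt(A\cap\Sxag)\neq\emptyset$ enter: together they furnish a feasible $\bar{\xx}\in\FX$ whose aggregate is the center of a relative ball of radius $\rhomin$ contained in $A\cap\Sxag$, and I define $\rhomin$ as this radius, so that $K_A=\tfrac{\diamX+1}{\rhomin}$ is precisely the Lipschitz ratio between the profile displacement and the size of the aggregate correction. A standard relative-interior estimate then shows that, whenever $\max(\mdset\esnu,D\esnu)<\rhomin$, a contraction parameter of order $\max(\mdset\esnu,D\esnu)/\rhomin$ restores feasibility, bounding each displacement by $K_A\max(D\esnu,\mdset\esnu)$.

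Substituting the four estimates into $\alpha\|\hxx\esnu-\sxx\|_2^2\leq T_1-T_2$ and collecting the two gradient-weighted displacement terms (each carrying a factor $\Bdf$ and a projection-plus-correction contribution) together with the direct correction term produces the factor $(4\Bdf+1)K_A$ in front of $\max(D\esnu,\mdset\esnu)$ and the factor $(2M+1)$ in front of $\mduti\esnu$, which is \eqref{eq:cvg_indiv}. For part~i) the argument is identical up to the final step: I invoke aggregative strong monotonicity \eqref{cd:strong_agg_mono_non} instead of \eqref{cd:strong_mono_non}, so the left-hand side becomes $\beta\|\hxxag\esnu-\sxxag\|^2$ while the right-hand side is unchanged, yielding \eqref{eq:cvg_agg_nou}. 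Finally, when there is no aggregative constraint, every profile is aggregate-feasible, the contraction step is vacuous, so $D\esnu$ may be set to $0$ and $K_A$ replaced by $\tfrac12$ (the projection step alone, with no interior-point dependence), and \Cref{ass:intpt} is no longer needed.
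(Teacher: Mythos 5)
Your proposal is correct and follows essentially the same route as the paper: the same monotonicity decomposition into the two cross terms, the same insertion of a projected profile $\Pi(\hxx\esnu)\in\FX(A)$ to test the VWE inequality, the same gradient-swap paying $O(\mduti\esnu)$ before testing the SVWE inequality at the corrected type-average $\Pi\esnu(\bpsi\esnu(\sxx))$, and the same projection-plus-contraction-toward-an-interior-profile argument (the paper's Lemmas 3.4 and 3.5) to bound both displacements by $2K_A\max(D\esnu,\mdset\esnu)$. No substantive differences to report.
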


Some notions and a series of lemmas are needed for the proof of \Cref{thm:converge_with_u}.
\medskip

\begin{notation} Let $\Pi\esnu_i(\cdot)$ denote the  (Euclidean)  projection function onto $\X_\i\esnu$ for $\i\in\I\esnu$ and $\Pi_\th(\cdot)$ the projection function onto $\X_\th$ for $\th \in \Theta$. 

Let $\Pi\esnu$ denote the  (Euclidean) projection function onto $\FXS\esnu(A\esnu)\subset \Symp^{I\esnu }$, and $\Pi$ the projection function onto $\FX(A) \subset L^2([0,1], \rit^T; \mu)$.
\end{notation}

Since $\X_\i\esnu$'s, $\X_\th$'s, $\FX(A)$ and $\FXS\esnu(A\esnu)$'s (as defined by \eqref{def:FXSA}) are all convex and closed in their respective Hilbert spaces, the projection functions onto these sets are well defined.

The following \Cref{lem:norm_xat_approx} shows that the players become infinitesimal along a  sequence of finite-type approximating games.
\begin{lemma}\label{lem:norm_xat_approx}
Under \Cref{ass_X_nonat}, for all $\snu \in \nit^*$,  $\|\xx\esnu\|_2 \leq \mdset\esnu+\diamX$  for all $\xx\esnu\in \FXS\esnu$.
\end{lemma}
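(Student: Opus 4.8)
The plan is to reduce the $L^2$-norm bound to a uniform pointwise bound on $\|\xx\esnu_\th\|$ and then integrate over $\Theta$. First I would fix an arbitrary symmetric profile $\xx\esnu\in\FXS\esnu$ and a type $\i\in\I\esnu$. By the definition of a symmetric profile, every player $\th\in\Theta\esnu_\i$ plays the common action $\xx\esnu_\th=\xx\esnu_\i\in\X\esnu_\i$, so it suffices to bound the norm of this single point.

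The key step is to transfer the norm bound from the original action sets---which are contained in $B_{\diamX}(\mathbf{0})$ by \Cref{ass_X_nonat}---to the approximating action sets $\X\esnu_\i$ via the Hausdorff-distance hypothesis of the approximating sequence. Concretely, for any $\th\in\Theta\esnu_\i$ we have, by \eqref{eq:def_dset}, that $d_H(\X_\th,\X\esnu_\i)\leq \dset\esnu_\i\leq \mdset\esnu$. Reading off the half of the Hausdorff distance that controls how far points of $\X\esnu_\i$ lie from $\X_\th$, this gives $d(\xx\esnu_\i,\X_\th)\leq\mdset\esnu$; since $\X_\th$ is compact by \Cref{ass_X_nonat}, the infimum is attained at some $\yy\in\X_\th$ with $\|\xx\esnu_\i-\yy\|\leq\mdset\esnu$. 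A triangle inequality together with $\|\yy\|\leq\diamX$ (as $\yy\in\X_\th\subset B_{\diamX}(\mathbf{0})$) then yields the pointwise estimate $\|\xx\esnu_\th\|=\|\xx\esnu_\i\|\leq\mdset\esnu+\diamX$, uniformly over $\th\in\Theta$.

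Finally I would integrate this pointwise bound: since $\int_0^1\|\xx\esnu_\th\|^2\dth\leq(\mdset\esnu+\diamX)^2$, taking square roots gives $\|\xx\esnu\|_2\leq\mdset\esnu+\diamX$, as claimed.

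I do not expect any serious obstacle here. The only point requiring a little care is to invoke the correct one of the two suprema in the definition of the Hausdorff distance, namely the one bounding $\sup_{\bb\in\X\esnu_\i}d(\bb,\X_\th)$, and to use the compactness of $\X_\th$ so that the nearest point $\yy$ is actually attained rather than merely approached; apart from that the argument is an elementary triangle-inequality estimate followed by an integration.
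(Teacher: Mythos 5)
Your argument is correct and is essentially identical to the paper's proof: the paper also bounds $\|\xx\esnu_i\|$ by projecting $\xx\esnu_i$ onto $\X_\th$ for some $\th\in\Theta\esnu_i$ (your nearest point $\yy$ is exactly that projection), applies the triangle inequality with $\X_\th\subset B_{\diamX}(\mathbf{0})$, and then integrates the resulting pointwise bound over $\Theta$. The only cosmetic difference is that the paper keeps the type-wise constant $\dset\esnu_i$ before passing to $\mdset\esnu$ at the end, which changes nothing.
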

\begin{proof}
\ifproofs
Let $\xx\esnu_\i \in \X_\i\esnu$ and $\th \in \Theta_\i\esnu$.  By definition of $\dset_\i\esnu$,  $ \big\|{\xx\esnu_i}{} -\Pi_\th\big({\xx\esnu_i}{}\big)\big\| \leq  \dset_\i\esnu$ so that $\norm{\xx\esnu_i} \leq  \big( \dset_\i\esnu  + \big\|\Pi_\th({\xx\esnu_i}{})\big\| \big) \leq  (\dset_\i\esnu  + \diamX) $.
 Then, $\|\xx\esnu\|^2_2=\sum_{i=1}^{I\esnu}\int_{\Theta_i}{\|\xx\esnu_i\|^2 \dth }=\sum_{i=1}^{I\esnu}{\mu\esnu_i}{\|\xx\esnu_i\|^2}\leq\sum_{i=1}^{I\esnu} \mu_i\esnu (\dset_\i\esnu  + \diamX)^2\leq  (\mdset\esnu  + \diamX)^2$. %
\else
Apply the triangle inequality to $\norm{\frac{\xx_i}{\mu_\i\esnu} -P_{\X_\th}\Big(\frac{\xx_i}{\mu_\i\esnu}\Big)}$ where $P_{\X_\th}$ is the projection on $\X_\th$ for a $ \th \in \Theta_i$. 
\fi
\end{proof}

The following lemma shows that the convergence of each type of action set in finite-type game $\Gna\esnu$ to that of her corresponding nonatomic player in $\Gna$, assumed by \Cref{eq:def_dset}, implies the convergence of the product action sets in $L^2([0,1],\M)$.   %
\begin{lemma}[Convergence of $\FXS\esnu$ to $\FX$]\label{lm:FY}
Under \Cref{ass_X_nonat}, for all $\snu\in \nit^*$,
\begin{enumerate}[i),wide]
\item for each $\xx\esnu \in \FXS\esnu$, $d_2(\xx\esnu, \FX)\leq \mdset\esnu$;
\item for each $\xx\in \FX$, $d_2(\bpsi\esnu(\xx), \FXS\esnu)\leq \mdset\esnu$; %
\item for each $i\in \I\esnu$ and each $\xx\esnu_i\in \X\esnu_i$, if $d({\xx\esnu_i}, \rbd {\X\esnu_i})>\dset\esnu_i$, then ${\xx\esnu_i}\in \X_\th$ for all $\th\in \Theta\esnu_i$;
\item for each $i\in \I\esnu$, each $\th\in \Theta\esnu_i$, and each $\xx_\th\in \X_\th$, if $d(\xx_\th, \rbd \X_\th)>\dset\esnu_i$, then $ \xx_\th \in \X_i\esnu$.
\end{enumerate}
\end{lemma}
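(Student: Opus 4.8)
The plan is to prove the four claims of \Cref{lm:FY} in turn, each following from the definition of the Hausdorff distance $\dset_\i\esnu$ between $\X_\th$ and $\X_\i\esnu$ (\Cref{eq:def_dset}) combined with the aggregated bound $\mdset\esnu=\max_\i \dset_\i\esnu$. The key observation throughout is that a bound on the pointwise (Hausdorff) distance between action sets lifts to an $L^2$-bound after integration, because a symmetric profile is constant on each $\Theta\esnu_i$ and $\sum_i \mu_i\esnu=1$.

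For part i), given $\xx\esnu\in\FXS\esnu$, I would for each type $i$ and each $\th\in\Theta\esnu_i$ pick the projection $\Pi_\th(\xx\esnu_i)\in\X_\th$; by definition of the Hausdorff distance, $\norm{\xx\esnu_i-\Pi_\th(\xx\esnu_i)}\leq\dset\esnu_i\leq\mdset\esnu$. Defining the profile $\yy$ by $\yy_\th\eqd\Pi_\th(\xx\esnu_i)$ for $\th\in\Theta\esnu_i$ gives a measurable (hence $\FX$-) profile, and
\begin{equation*}
d_2(\xx\esnu,\FX)^2\leq\norm{\xx\esnu-\yy}_2^2=\txt\sum_{i}\int_{\Theta_i}\norm{\xx\esnu_i-\Pi_\th(\xx\esnu_i)}^2\dth\leq\sum_i\mu_i\esnu(\mdset\esnu)^2=(\mdset\esnu)^2,
\end{equation*}
exactly as in \Cref{lem:norm_xat_approx}. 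The only subtle point is measurability of $\th\mapsto\Pi_\th(\xx\esnu_i)$ on $\Theta_i$, which follows from measurability of the projection via \Cref{ass_measur} and the measurable selection/maximum theorem already invoked in the proof of \Cref{thm:agg_wardrop}. For part ii), the symmetric projection $\bpsi\esnu(\xx)$ replaces $\xx$ by its average $\tfrac{1}{\mu_i\esnu}\int_{\Theta\esnu_i}\xx_\xi\,\mr{d}\xi$ on each $\Theta\esnu_i$; since $\X_\i\esnu$ is convex and each $\xx_\xi$ lies within distance $\dset\esnu_i$ of $\X_\i\esnu$, the average also lies within $\dset\esnu_i\leq\mdset\esnu$ of $\X_\i\esnu$ (the distance to a convex set is a convex function, so Jensen applies). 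Taking the corresponding constant profile in $\FXS\esnu$ and integrating gives the $\mdset\esnu$ bound just as above.

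Parts iii) and iv) are the geometric heart of the lemma, and I expect them to be the main obstacle, since they require the \emph{inclusion} of one set into another rather than merely a distance bound; this is where the hypothesis $\spa\X\esnu_i=\spa\X_\th$ (built into \Cref{def:approx_seq}) is essential. For iii), take $\xx\esnu_i\in\X\esnu_i$ with $d(\xx\esnu_i,\rbd\X\esnu_i)>\dset\esnu_i$ and fix $\th\in\Theta\esnu_i$. Since $d_H(\X_\th,\X\esnu_i)\leq\dset\esnu_i$, there is a point $\zz\in\X_\th$ with $\norm{\xx\esnu_i-\zz}\leq\dset\esnu_i$; I want to upgrade this to $\xx\esnu_i\in\X_\th$ itself. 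The idea is that the relative interior ball $B_{\dset\esnu_i}(\xx\esnu_i)\cap\aff\X\esnu_i$ is contained in $\X\esnu_i$, and every point of $\X\esnu_i$ is within $\dset\esnu_i$ of $\X_\th$; since both sets share the same affine hull (equal spans plus a common point), a point strictly interior to $\X\esnu_i$ by more than $\dset\esnu_i$ cannot be outside $\X_\th$ without forcing a point of $\X\esnu_i$ to be farther than $\dset\esnu_i$ from $\X_\th$, contradicting the Hausdorff bound. Part iv) is the symmetric statement with the roles of $\X_\th$ and $\X\esnu_i$ exchanged, proved identically. The delicate step I would flag is making the ``strictly interior implies contained'' argument rigorous: one must argue within the common affine hull so that the tangent/normal directions match, which is precisely what the shared-span assumption guarantees, and one should phrase it via the relative interior characterization $\rlt C=\{\xx\in C:\exists\eta>0,\ B_\eta(\xx)\cap\aff C\subset C\}$ recalled in the notation section.
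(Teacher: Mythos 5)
Your proposal is correct and follows essentially the same route as the paper: parts i) and ii) via pointwise projections bounded by the Hausdorff distance and then integrated (the paper averages the projections where you invoke convexity of the distance function — an equivalent step), and parts iii)–iv) via the same contradiction argument, exhibiting a point of $\X\esnu_i$ (obtained by moving away from $\X_\th$ within the interiority margin, which stays in the set thanks to the shared span) whose distance to $\X_\th$ would exceed $\dset\esnu_i$. The only detail you leave implicit, and which the paper writes out, is the explicit witness $\yy\esnu_i = \xx\esnu_i + \eta\,\frac{\xx\esnu_i-\Pi_{\X_\th}(\xx\esnu_i)}{\|\xx\esnu_i-\Pi_{\X_\th}(\xx\esnu_i)\|}$ together with the additivity of the distance along the normal ray to the convex set $\X_\th$.
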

\begin{proof}
\begin{enumerate}[i),wide]
\item Let $\xx\esnu\in \FXS\esnu$. For each $i\in \I\esnu$ and each $\th \in \Theta_i\esnu$, define $\yy_\th= \Pi_\th ( \xx\esnu ) \in \X_\th$, so that $\|\yy_\th - \xx\esnu  \| \leq \dset_\i\esnu $. Let us show that  $\yy$ is measurable on each $\Theta_i\esnu$, hence measurable on $\Theta$ so that $\yy\in \X$. 
For that,  define $\kappa_i$ on $\Theta\esnu_i\times \M^T$ by $\kappa_i:(\th,\ww)\mapsto \|\xx\esnu_i - \ww\|$. Then, $\kappa_i$ is a Carath\'eodory function. Since the correspondence $\Theta\esnu_i \ni \th \mapsto \X_\th$ is measurable, according to the measurable maximum theorem \cite[Thm. 18.19]{aliprantis2006infinite}, there is a measurable selection of $\xx_\th\in \arg\min_{\X_\th}\kappa_i(\th,\cdot)$. The minimum of $\kappa_i(\th,\cdot)$ on $\X_\th$ is unique and is just $\yy_\th$, hence $\yy$ is measurable on $\Theta_i\esnu$.

Then, $\| \xx\esnu  - \yy\|_2  \leq \mdset\esnu$, which shows that $d_2(\xx\esnu , \FX)\leq \mdset\esnu$. 
\item %
Let $\xx \in \FX$. For each $\i\in\I\esnu$, $\th\in\Theta\esnu_i$, $\|\xx_\th - \Pi\esnu_{\i}(\xx_\th) \|\leq \dset_i\esnu$.
 Since $\X\esnu_{i}$ is a convex subset in $\rit^T$, $\frac{1}{\mu\esnu_i}\int_{\Theta\esnu_i}\Pi\esnu_{i}(\xx_\th)\dth\in \X\esnu_{i}$. 
 Define $\yy\in \FX\esnu$ by $\yy_\th = \yy_i \eqd \frac{1}{\mu\esnu_i} \int_{\Theta\esnu_i}\Pi\esnu_{i}(\xx_\th)\dth \in \X\esnu_i$ for each $\th \in \Theta_i$, for $i\in \I\esnu_i$. 
 Then, from Cauchy-Schwartz inequality:
\begin{align*}
&\|\bpsi\esnu(\xx)-\yy \|^2_2= \sum_{i\in \I\esnu} {\mu\esnu_i}\|\bpsi\esnu_i(\xx)-\frac{1}{\mu\esnu_i}\txt\int_{\Theta\esnu_i}\Pi\esnu_{i}(\xx_\th)\dth\|^2
=\sum_{i\in \I\esnu} \frac{1}{\mu\esnu_i}\|\int_{\Theta\esnu_i}(\xx_\th - \Pi\esnu_{i}(\xx_\th))\dth\|^2
\\ & \leq \sum_{i\in \I\esnu} \frac{1}{\mu\esnu_i}\mu_i\esnu \int_{\Theta_i}\|\xx_\th - \Pi\esnu_{i}(\xx_\th)\|^2 \dth = \sum_{i\in \I\esnu}  \int_{\Theta_i}\|\xx_\th - \Pi\esnu_{i}(\xx_\th)\|^2 \dth \leq \sum_{i\in \I\esnu}\mu_i\esnu (\dset\esnu_i)^2 \leq (\mdset\esnu)^2,
\end{align*}
so that $\|\bpsi\esnu(\xx)-\yy \|_2 
\leq \mdset\esnu$. %
This concludes the proof.
\item Fix $\snu\in\nit^*$, $i\in \I\esnu$ and $\th\in \Theta\esnu_i$. Consider $\xx\esnu_i\in \X\esnu_i$ such that $d(\xx\esnu_i, \rbd {\X\esnu_i})>\eta$ for some $\eta>\dset\esnu_i$. 
Assume that $\xx\esnu_i\notin \X_\th$ i.e. $\big\|\xx\esnu_i-\Pi_{\X_\th}(\xx\esnu_i)\big\|>0$. Let $\yy\esnu_i = \xx\esnu_i +\eta \frac{\xx\esnu_i-\Pi_{\X_\th}(\xx\esnu_i)}{\big\|\xx\esnu_i-\Pi_{\X_\th}(\xx\esnu_i)\big\|}\in {\X\esnu_i}{}$ as $\spa \X_\th \subset \spa \X_i\esnu$.

 Since $\X\esnu_i$ is convex, 
$d(\yy\esnu_i, \X_\th)=\big\|\xx\esnu_i-\Pi_{\X_\th}(\xx\esnu_i)\big\|+\big\|\yy\esnu_i - \xx\esnu_i\big\|>\eta> \dset\esnu_i$, which contradicts the fact that $d(\X_\th, \xx\esnu_i)\leq \dset\esnu_i$. Hence $\xx\esnu_i \in \X_\th$.
\item The proof is similar to that of iii). %
\end{enumerate}
\end{proof}

Note that, because of the convexity assumptions (\Cref{ass_X_nonat}), the sets of aggregate action profiles in the finite-type game $\Gna\esnu(A\esnu)$, obtained by considering symmetric or by considering  non  symmetric profiles are the same, that is:
\begin{equation}
\Sxag\esnu \eqd \Big\{\txt\int_\Theta \xx_\th \dth \ | \xx \in \FX\esnu \Big\} = \Big\{\txt\int_\Theta \xx_\th \dth \ | \xx \in \FXS\esnu \Big\} \ , 
\end{equation}
and the same equality holds for $\Sxag\esnu \cap A\esnu$ when considering aggregate constraint $A\esnu$.

The sequence of sets of aggregate action profiles $(\Sxag\esnu)_\snu$ in games $(\Gna\esnu(A\esnu))_\snu$ \emph{with aggregative constraints}, converges to the set of aggregate-action profiles of the nonatomic aggregative game $\Gna(A)$ \emph{with an aggregative constraint}, as the following lemma says.

\begin{lemma}\label{lem:hausdorff_agg_sets}
Under \Cref{ass_X_nonat},  for $\snu\in \nit^*$, 
\begin{enumerate}[i),wide,labelindent=2pt]
\item $d_H ( \, \Sxag\esnu,\Sxag \,) \leq \mdset\esnu$;
\item for $\xxag\in \rlt\Sxag$, if $d(\xxag, \rbd \Sxag)>\mdset\esnu$, then $\xxag\in \Sxag\esnu$; \\
 for $\xxag\esnu\in \rlt\Sxag\esnu$, if $d(\xxag, \rbd \Sxag\esnu)>\mdset\esnu$, then $\xxag\in \Sxag$;
\item for $\xxag\in \rlt A$, if $d(\xxag, \rbd A)>D\esnu$, then $\xxag\in A\esnu$; \\
 for $\xxag\esnu\in \rlt A\esnu$, if $d(\xxag\esnu, \rbd A\esnu) > D\esnu$, then $\xxag\esnu \in A$; 
\item for $\xxag\in \rlt(\Sxag\cap A)$, if $d(\xxag, \rbd (\Sxag\cap A)) > \max (\mdset\esnu, D\esnu)$, then $\xxag\in \Sxag\esnu\cap A\esnu$; \\
 for $\xxag\esnu\in \rlt(\Sxag\esnu\cap A\esnu)$, if $d(\xxag\esnu, \rbd (\Sxag\esnu \cap A\esnu)) > \max (\mdset\esnu, D\esnu)$, then $\xxag\esnu \in \Sxag\cap A$. 
\end{enumerate}
\end{lemma}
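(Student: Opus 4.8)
The four claims rest on \Cref{lm:FY}, with i) transferring the $L^2$ profile estimates to Euclidean estimates on aggregate sets, ii)--iii) being deep-interior separation arguments, and iv) combining them at the price of an affine-hull alignment. For i), I would take $\xxag\esnu\in\Sxag\esnu$ and write it as $\xxag\esnu=\int_0^1\xx\esnu_\th\dth$ for some $\xx\esnu\in\FXS\esnu$ (symmetric and general profiles yield the same aggregate set by convexity of the $\X\esnu_i$). \Cref{lm:FY}\,i) gives $\yy\in\FX$ with $\norm{\xx\esnu-\yy}_2\le\mdset\esnu$, and since $\norm{\int_0^1(\xx\esnu_\th-\yy_\th)\dth}\le\int_0^1\norm{\xx\esnu_\th-\yy_\th}\dth\le\norm{\xx\esnu-\yy}_2$ (Jensen followed by Cauchy--Schwarz on the probability space $\Theta$), we obtain $d(\xxag\esnu,\Sxag)\le\mdset\esnu$. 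Conversely, for $\xxag=\int_0^1\xx_\th\dth\in\Sxag$, the averaging map preserves the integral, $\int_0^1\bpsi\esnu_\th(\xx)\dth=\int_0^1\xx_\th\dth=\xxag$, so \Cref{lm:FY}\,ii) produces $\yy\esnu\in\FXS\esnu$ within $\mdset\esnu$ in $L^2$, whence $d(\xxag,\Sxag\esnu)\le\norm{\bpsi\esnu(\xx)-\yy\esnu}_2\le\mdset\esnu$. Taking suprema over both sets yields $d_H(\Sxag\esnu,\Sxag)\le\mdset\esnu$.

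For ii), I would argue by contradiction with a supporting hyperplane. Assume $\xxag\in\rlt\Sxag$ with $\eta\eqd d(\xxag,\rbd\Sxag)>\mdset\esnu$ but $\xxag\notin\Sxag\esnu$, and set $\bv\eqd(\xxag-\Pi_{\Sxag\esnu}(\xxag))/\norm{\xxag-\Pi_{\Sxag\esnu}(\xxag)}$. The condition $\spa\Sxag=\spa\Sxag\esnu$ ensures $\bv$ is a direction of $\aff\Sxag$, so for $\eta'<\eta$ the point $\zz\eqd\xxag+\eta'\bv$ lies in $\aff\Sxag$ and, being inside the deep-interior ball, in $\Sxag$. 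Because the hyperplane through $\Pi_{\Sxag\esnu}(\xxag)$ with normal $\bv$ supports $\Sxag\esnu$, one has $d(\zz,\Sxag\esnu)\ge\langle\bv,\zz-\Pi_{\Sxag\esnu}(\xxag)\rangle=\norm{\xxag-\Pi_{\Sxag\esnu}(\xxag)}+\eta'$. Letting $\eta'\uparrow\eta$ and invoking the bound $d(\zz,\Sxag\esnu)\le\mdset\esnu$ from i) forces $\norm{\xxag-\Pi_{\Sxag\esnu}(\xxag)}+\eta\le\mdset\esnu<\eta$, a contradiction; hence $\xxag\in\Sxag\esnu$. The second half of ii) is the same argument with the roles of $\Sxag$ and $\Sxag\esnu$ exchanged (the estimate in i) is symmetric), and iii) is verbatim the same with $(A,A\esnu,D\esnu)$ in place of $(\Sxag,\Sxag\esnu,\mdset\esnu)$, using $d_H(A,A\esnu)=D\esnu$ by definition and the assumed $\spa A=\spa A\esnu$.

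For iv), the natural route is to run the scheme of ii) on the pair $(\Sxag\cap A,\Sxag\esnu\cap A\esnu)$: if $\xxag$ is $\eta$-deep in $\Sxag\cap A$ with $\eta>\max(\mdset\esnu,D\esnu)$ but $\xxag$ failed to lie in, say, $A\esnu$, I would perturb $\xxag$ along the separation direction of $A\esnu$ and contradict the approximation bound, and symmetrically for $\Sxag\esnu$. I expect the main obstacle here to be genuinely geometric rather than computational: the separation direction produced by $A\esnu$ (or $\Sxag\esnu$) need not be tangent to $\aff(\Sxag\cap A)$, so the perturbed point may leave $\Sxag\cap A$ and the deep-interior estimate no longer applies. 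Indeed $d(\xxag,\rbd(\Sxag\cap A))$ controls neither $d(\xxag,\rbd\Sxag)$ nor $d(\xxag,\rbd A)$ in general (a deeply interior point of a slice $\Sxag\cap A$ can sit close to $\rbd\Sxag$ or $\rbd A$), so iv) does \emph{not} reduce formally to ii) and iii). The tool I would use to overcome this is the constraint qualification $\rlt(\Sxag\cap A)\neq\emptyset$ from the preceding remark, which yields $\aff(\Sxag\cap A)=\aff\Sxag\cap\aff A$ and, combined with $\spa\Sxag=\spa\Sxag\esnu$ and $\spa A=\spa A\esnu$, a compatibility $\spa(\Sxag\cap A)=\spa(\Sxag\esnu\cap A\esnu)$; this transversality lets one keep the perturbed point simultaneously feasible for $\Sxag$ and $A$ and close the separation argument as in ii). Establishing this intersection-level regularity quantitatively is the delicate point of the whole lemma, and I would treat it as the crux on which the cleanliness of the final constant depends.
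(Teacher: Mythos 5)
Your parts i)--iii) follow the paper's own route: i) is the paper's proof (transfer of the $L^2$ bounds of \Cref{lm:FY} to the aggregates via Cauchy--Schwarz in one direction, pointwise projection in the other), and your deep-interior/supporting-hyperplane contradiction for ii)--iii) is exactly the device the paper invokes by pointing back to the proof of \Cref{lm:FY}.iii). The only caveat there is your appeal to $\spa \Sxag=\spa\Sxag\esnu$ to keep the perturbation direction inside $\aff\Sxag$; this is not literally among the hypotheses of \Cref{def:approx_seq} (which only gives $\spa\X\esnu_i=\spa\X_\th$ and $\spa A=\spa A\esnu$) and would need a one-line derivation, but the paper is equally silent on it.

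Part iv) is where you and the paper part ways, and where your proposal is incomplete. The paper settles iv) with the single sentence that it is a corollary of ii) and iii); this tacitly uses $d(\xxag,\rbd(\Sxag\cap A))\le\min\bigl(d(\xxag,\rbd\Sxag),\,d(\xxag,\rbd A)\bigr)$, which is valid whenever $\aff(\Sxag\cap A)=\aff\Sxag=\aff A$ (for $r<d(\xxag,\rbd(\Sxag\cap A))$ the set $B_r(\xxag)\cap\aff(\Sxag\cap A)$ lies in $\Sxag\cap A$, hence in each factor), in particular in the full-dimensional case. You are right that this inequality fails when the affine hulls differ, so iv) is not a \emph{formal} corollary of ii)--iii); your diagnosis is sharper than the paper's one-liner. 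But your proposed repair does not close the gap: the constraint qualification $\rlt(\Sxag\cap A)\neq\emptyset$ (or even $\rlt\Sxag\cap\rlt A\neq\emptyset$) does not restore the inequality. Take $\Sxag$ a full-dimensional triangle with vertices $(0,0),(1,0),(0,1)$ and $A$ a compact piece of the line $y=x$ through its interior: at the midpoint of the chord one has $d(\xxag,\rbd(\Sxag\cap A))=\sqrt2/4>1/4=d(\xxag,\rbd\Sxag)$, so the perturbed point in your separation argument can still leave $\Sxag$, and a translate of $\Sxag$ at Hausdorff distance between $1/4$ and $\sqrt2/4$ even violates the conclusion of iv) as literally stated. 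The workable fixes are either to assume the relevant affine hulls coincide (which makes the paper's reduction correct via the nested-ball observation), or to rerun your ii)-type argument directly on the pair $\bigl(\Sxag\cap A,\ \Sxag\esnu\cap A\esnu\bigr)$ after first proving a Hausdorff estimate $d_H(\Sxag\cap A,\Sxag\esnu\cap A\esnu)\le C\max(\mdset\esnu,D\esnu)$ — a quantitative transversality statement that neither you nor the paper supplies. As written, your iv) correctly names the obstacle but does not overcome it.
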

\begin{proof}
\ifproofs
i) 
Fix $\xx \in \FX$. 
Consider $\yy\in \FXS\esnu$ such that $\|\bpsi\esnu(\xx)-\yy \|_2\leq \mdset\esnu$ (cf. \Cref{lm:FY}). 
Then, from Cauchy-Schwarz inequality, $\|\txt\int \bpsi\esnu(\xx) - \txt\int\yy\|^2\leq \|\bpsi\esnu(\xx)-\yy \|^2_2\leq (\mdset\esnu)^2$. 
Hence $d(\int \xx, \Sxag\esnu)\leq \|\txt\int \bpsi\esnu(\xx) - \txt\int\yy\|\leq \mdset\esnu$.

On the other hand, let $\xx\esnu\in \FXS\esnu$, thus $\xxag\esnu \eqd \int \xx\esnu \in \Sxag\esnu$.
 For each $i\in \I\esnu$ and each $\th \in \Theta_i\esnu$, define $\yy_\th= \Pi_\th\big(\xx\esnu_i \big) \in \X_\th$, so that $\|\xx_\th\esnu - \yy_\th \| \leq \dset_\i\esnu $.
  Then, $\|\int \xx\esnu  - \int \yy \| \leq \int \|\xx_\th\esnu- \yy_\th \| \dth   \leq \mdset\esnu $, 
which shows that $d \left( \xxag\esnu, \Sxag\right) \leq \mdset\esnu$ for all $\xxag\esnu\in\ \Sxag\esnu$. 

\smallskip

ii-iii)  The proof is similar to that for \Cref{lm:FY}.iii).

iv) These are corollaries of ii) and iii).
\end{proof}

For the proof of the main theorem, we need to rely on nonatomic profiles that are away from the boundary of the feasible domain, giving us some margin. The existence of such profiles is ensured by the following lemma.

\begin{lemma}\label{lm:intprofile}
Under \Cref{ass_X_nonat,ass:intpt}, there is a strictly positive constant $\rhoz$ and a nonatomic action profile $\zz\in \FX$ such that $\int \zz \in \rlt (\Sxag\cap A)$ and, for almost all $\th\in \Theta$, $d(\zz_\th, \rbd \X_\th)>3\rhoz$.
\end{lemma}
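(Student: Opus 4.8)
The plan is to obtain $\zz$ as a convex combination of the uniformly interior profile $\bar{\xx}$ provided by \Cref{ass:intpt} and a second feasible profile whose aggregate can be steered into $\rlt(\Sxag\cap A)$; interiority will come entirely from the weight carried by $\bar{\xx}$, while the placement of the aggregate is handled by a one-parameter chord argument. Thus I would not try to realize a prescribed aggregate by $\bar{\xx}$ itself, but only borrow its interiority.

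First I would fix a point $\xxag^0\in\rlt(\Sxag\cap A)$, which is nonempty. The one genuinely nonroutine ingredient is that this $\xxag^0$ may be taken in $\rlt\Sxag$ and not merely on $\rbd\Sxag$. This matters because any profile whose aggregate lies on $\rbd\Sxag$ is forced onto the relative boundary of almost every $\X_\th$: testing $\int\zz$ against a functional supporting $\Sxag$ there and using that the support function of $\Sxag$ equals the integral of the support functions of the $\X_\th$ turns the boundary equality into a pointwise one, so $\zz_\th$ maximizes that functional over $\X_\th$ for a.e.\ $\th$ and hence lies in $\rbd\X_\th$, which is incompatible with the sought interiority. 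It is precisely here that the standing nondegeneracy of the constraint ($\rlt\Sxag\cap\rlt A\neq\emptyset$, whence $\rlt(\Sxag\cap A)=\rlt\Sxag\cap\rlt A\subseteq\rlt\Sxag$) is invoked to guarantee $\xxag^0\in\rlt\Sxag$.

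Writing $\bar{\xxag}\eqd\int\bar{\xx}\in\Sxag$, I would then extend the chord from $\bar{\xxag}$ through $\xxag^0$ a little beyond $\xxag^0$ while staying in $\Sxag$: since $\xxag^0\in\rlt\Sxag$, there are $s\in(0,1)$ and $\yyag\in\Sxag$ with $\xxag^0=(1-s)\bar{\xxag}+s\,\yyag$ (if $\bar{\xxag}=\xxag^0$, which then lies in $A$, take $\zz=\bar{\xx}$ directly). By the definition of $\Sxag$, choose $\yy^0\in\FX$ with $\int\yy^0=\yyag$, and set $\zz\eqd(1-s)\bar{\xx}+s\,\yy^0\in\FX$, so that $\int\zz=(1-s)\bar{\xxag}+s\,\yyag=\xxag^0\in\rlt(\Sxag\cap A)$, which is the first desired property.

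For the interiority I would invoke the elementary convexity estimate that a convex combination $(1-s)a+s b$ of an $\eta$-interior point $a$ of a convex set $C$ (meaning $B_\eta(a)\cap\aff C\subseteq C$) with any $b\in C$ is $(1-s)\eta$-interior; applied to $\bar{\xx}_\th$ and $\yy^0_\th$ for a.e.\ $\th$, this yields $d(\zz_\th,\rbd\X_\th)\geq(1-s)\eta$. Setting $\rhoz\eqd(1-s)\eta/4>0$ then gives $d(\zz_\th,\rbd\X_\th)>3\rhoz$ for a.e.\ $\th$, completing the construction. The main obstacle is the second step---ensuring that a point of $\rlt(\Sxag\cap A)$ can be chosen inside $\rlt\Sxag$---since everything else reduces to the chord extension and the standard convex-combination interiority bound.
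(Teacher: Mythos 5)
Your construction is correct in its mechanics and shares the paper's core idea --- form a convex combination of the uniformly interior profile $\bar{\xx}$ from \Cref{ass:intpt} with another feasible profile, and let the weight carried by $\bar{\xx}$ supply the margin $d(\zz_\th,\rbd\X_\th)\geq(1-s)\eta$ --- but you handle the aggregate constraint differently. The paper starts from some $\yy\in\FX(A)$ with $\int\yy\in\rlt(\Sxag\cap A)$ and moves it \emph{slightly} toward $\bar{\xx}$, choosing the weight $t$ on $\bar{\xx}$ small enough that the aggregate is displaced by at most $\tfrac{2}{3}\,d(\int\yy,\rbd(\Sxag\cap A))$ and so (it asserts) remains in $\rlt(\Sxag\cap A)$; this yields the explicit constant $\rhoz=\tfrac{\eta}{9M}\,d(\int\yy,\rbd(\Sxag\cap A))$. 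You instead prescribe the aggregate exactly, recovering the target point $\xxag^0$ as a strict convex combination of $\int\bar{\xx}$ and a chord-extended point of $\Sxag$; this buys you an exact aggregate and dispenses with the quantitative calibration, at the price of needing $\xxag^0\in\rlt\Sxag$.

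One caveat. The ``standing nondegeneracy'' you invoke, $\rlt\Sxag\cap\rlt A\neq\emptyset$, is not among the paper's hypotheses: the paper only assumes $\rlt(\Sxag\cap A)\neq\emptyset$ (automatic for a nonempty convex set), and this does not give $\rlt(\Sxag\cap A)\subseteq\rlt\Sxag$ --- consider $\Sxag=[0,1]^2$ (e.g.\ $\X_\th=[0,1]^2$ for all $\th$) and $A=[0,1]\times\{1\}$. So, as written, your proof uses an additional assumption. That said, you have identified a genuine subtlety rather than manufactured one: in that same example the lemma's conclusion actually fails (any $\zz\in\FX$ with $\int\zz\in A$ has $z_{\th,2}=1$ a.e., hence $\zz_\th\in\rbd\X_\th$), and the paper's own step ``hence $\int\zz\in\rlt(\Sxag\cap A)$'' silently requires the displacement toward $\int\bar{\xx}$ to stay in $\aff(\Sxag\cap A)$. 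Some hypothesis of the kind you name is therefore needed; you should state it explicitly as an extra assumption rather than attribute it to the paper. Finally, your motivating remark that an aggregate on $\rbd\Sxag$ forces $\zz_\th\in\rbd\X_\th$ requires the supporting functional to be non-constant on $\X_\th$, but since that remark carries no weight in the construction itself, it does not affect the argument.
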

\begin{proof}
Take $\bar{\xx}$ the nonatomic action profile in \Cref{ass:intpt} and an arbitrary $\yy\in\FX(A)$ such that $\int \yy\in \rlt (\Sxag\cap A)$.  

Denote $t=\frac{d(\int\yy, \rbd (\Sxag\cap A))}{3M}$.  Define profile $\zz\in \FX$ by $\zz =\yy - t(\yy -\bar{ \xx}) $. 

Firstly, $\|\int\yy - \int\zz\|=t\|\int\yy-\int\bar{\xx}\|\leq t 2M\leq \frac{2}{3}d(\int\yy, \rbd (\Sxag\cap A))$, hence $\int\zz \in \rlt (\Sxag \cap A)$.

Besides, for any $\th$, $\zz_\th =\yy_\th - t(\yy_\th - \bar{\xx}_\th) $. Since $d(\bar{\xx}_\th, \rbd \X_\th)>\eta$, $\yy_\th\in \X_\th$, and $\X_\th$ is convex, one has $d(\zz_\th, \rbd \X_\th ) > \eta t = \frac{\eta}{3M}d(\int\yy, \rbd (\Sxag\cap A))$. One concludes by defining  $\rhoz \eqd \frac{\eta}{9M}d(\int\yy, \rbd (\Sxag\cap A))$.
\end{proof}

\begin{notation}
Denote $\zzag=\int \zz$ where $\zz$ is the one in \Cref{lm:intprofile} and define $\rhoZ \eqd\frac{1}{3}d(\zzag, \rbd \Sxag\cap A)>0$ and the parameter $\rhomin \eqd \min(\rhoz , \rhoZ)$, appearing in the bounds of \Cref{thm:converge_with_u}. \end{notation}

\begin{figure}[hbtp!]
\centering
\includegraphics[scale=1]{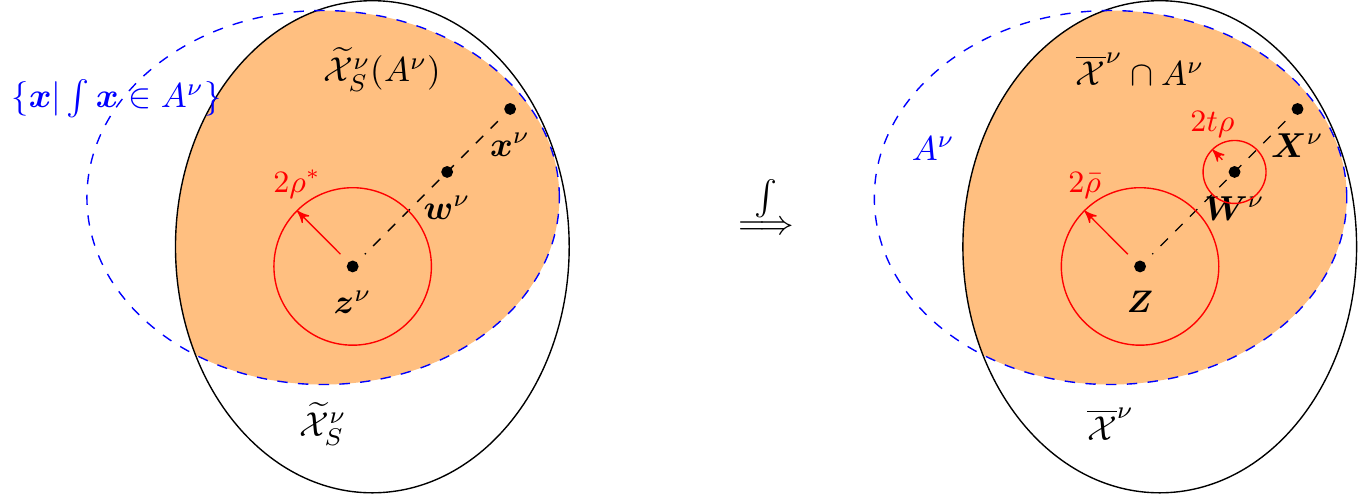}
\caption{Illustration of the mapping $\int_{\Theta} (.)$ between $\FX\esnu(A\esnu)$ and $\Sxag\cap A$ used in \Cref{lem:dist_agg_genized_sets}}
\end{figure}

The following lemma shows that the space of symmetric action profiles in the finite-type game \emph{with aggregative constraint}, $\FXS\esnu(A\esnu)$, is converging to the space of action profiles in the nonatomic aggregative game \emph{with aggregative constraint}, $\FX(A)$.
\begin{lemma}[Convergence of $\FXS\esnu(A\esnu)$ to $\FX(A)$]
\label{lem:dist_agg_genized_sets}
Under \Cref{ass_X_nonat,ass:intpt}, let $K_A=\tfrac{\diamX+1}{\rhomin}$. Then, for all $\snu\in \nit^*$ such that $\max(\mdset\esnu, D\esnu) < \rhomin $,
\begin{enumerate}[i)]
\item for each $\xx\esnu \in \FXS\esnu(A\esnu)$, $d_2(\xx\esnu, \FX(A))\leq 2K_{A}\max(D\esnu,\mdset\esnu)$;

\item  for each $\xx\in \FX(A)$, $d_2(\bpsi\esnu(\xx), \FXS\esnu(A\esnu))\leq 2 K_{A}\max(D\esnu,\mdset\esnu)$. %
\end{enumerate}

\end{lemma}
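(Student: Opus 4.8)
The plan is to prove both halves by a single device: \emph{retract the given profile toward the deep-interior profile $\zz$ supplied by \Cref{lm:intprofile} by the fraction} $t\eqd\max(\mdset\esnu,D\esnu)/(2\rhomin)$. Write $\epsilon\eqd\max(\mdset\esnu,D\esnu)$, so the standing hypothesis $\epsilon<\rhomin$ gives $t<1$. The point of the construction is that the two margins encoded in $\rhomin=\min(\rhoz,\rhoZ)$ — a uniform interior margin $3\rhoz$ for the individual action sets and an interior margin $3\rhoZ$ for the aggregate feasible set — let this \emph{single} parameter $t$ restore simultaneously the per-player feasibility and the aggregative constraint.

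For part i), given $\xx\esnu\in\FXS\esnu(A\esnu)$, set $\ww\eqd(1-t)\xx\esnu+t\zz$ and claim $\ww\in\FX(A)$. Fixing $\th\in\Theta\esnu_i$, the bound $d_H(\X_\th,\X\esnu_i)\leq\dset\esnu_i\leq\mdset\esnu$ gives $\|\xx\esnu_\th-\Pi_\th(\xx\esnu_\th)\|\leq\mdset\esnu$, and the span hypothesis $\spa\X\esnu_i=\spa\X_\th$ of \Cref{def:approx_seq} places this displacement inside $\spa\X_\th$; since $\zz_\th$ lies at distance $>3\rhoz\geq3\rhomin$ from $\rbd\X_\th$, the anchor $(1-t)\Pi_\th(\xx\esnu_\th)+t\zz_\th$ has margin $\geq 3t\rhomin\geq\mdset\esnu$ inside $\aff\X_\th$, which absorbs the displacement and yields $\ww_\th\in\X_\th$; hence $\ww\in\FX$. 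For the aggregate, $\int\ww=(1-t)\xxag\esnu+t\zzag$ is a convex combination of points of $\Sxag\esnu\cap A\esnu$ — here $\zzag\in\Sxag\esnu\cap A\esnu$ by \Cref{lem:hausdorff_agg_sets}.iv since its margin $3\rhoZ$ exceeds $\epsilon$ — whose distance to $\rbd(\Sxag\esnu\cap A\esnu)$ exceeds $t(3\rhoZ-\epsilon)>\epsilon$, so a second application of \Cref{lem:hausdorff_agg_sets}.iv gives $\int\ww\in\Sxag\cap A\subset A$ and thus $\ww\in\FX(A)$. Finally $\|\ww-\xx\esnu\|_2=t\|\zz-\xx\esnu\|_2\leq t(\|\zz\|_2+\|\xx\esnu\|_2)$, and \Cref{lem:norm_xat_approx} bounds $\|\xx\esnu\|_2\leq\mdset\esnu+\diamX$, so $\|\ww-\xx\esnu\|_2\leq t(2\diamX+\mdset\esnu)\leq\tfrac{2(\diamX+1)}{\rhomin}\epsilon=2K_A\epsilon$.

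Part ii) is the mirror image, applied to $\bpsi\esnu(\xx)$ and the symmetric interior profile $\bpsi\esnu(\zz)$. One first checks $\bpsi\esnu(\zz)\in\FXS\esnu(A\esnu)$: by \Cref{lm:FY}.iv the margin $3\rhoz>\mdset\esnu$ forces $\zz_\xi\in\X\esnu_i$ for $\xi\in\Theta\esnu_i$, hence the average $\bpsi\esnu(\zz)_\th$ stays in the convex set $\X\esnu_i$ (with margin $\geq 2\rhomin$), while $\int\bpsi\esnu(\zz)=\zzag\in\Sxag\esnu\cap A\esnu$. Then $\ww\esnu\eqd(1-t)\bpsi\esnu(\xx)+t\bpsi\esnu(\zz)$ is shown to lie in $\FXS\esnu(A\esnu)$ exactly as before, now using $d(\bpsi\esnu(\xx)_\th,\X\esnu_i)\leq\mdset\esnu$ for the per-type feasibility and $\int\ww\esnu=(1-t)\xxag+t\zzag\in\Sxag\cap A$ retracted into $\Sxag\esnu\cap A\esnu$ by \Cref{lem:hausdorff_agg_sets}.iv. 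Since $\bpsi\esnu$ is an averaging (conditional-expectation) map and hence non-expansive in $L^2$, $\|\ww\esnu-\bpsi\esnu(\xx)\|_2=t\|\bpsi\esnu(\zz)-\bpsi\esnu(\xx)\|_2\leq t\cdot2\diamX\leq2K_A\epsilon$.

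The step I expect to be the main obstacle is reconciling the two kinds of feasibility after a \emph{single} retraction: the membership $\ww_\th\in\X_\th$ and the aggregate membership $\int\ww\in A$ must hold for the \emph{same} $t$, which is precisely why one needs both the uniform action-set margin $\rhoz$ and the aggregate margin $\rhoZ$, merged into $\rhomin=\min(\rhoz,\rhoZ)$ through \Cref{lm:intprofile}. The subtle part is ensuring these margins are genuinely available along the retraction direction, i.e. that all perturbations stay inside the correct affine hulls — this is exactly what the span hypotheses $\spa\X\esnu_i=\spa\X_\th$ and $\spa A=\spa A\esnu$ of \Cref{def:approx_seq} secure (in particular they make $\aff(\Sxag\cap A)$ and $\aff(\Sxag\esnu\cap A\esnu)$ coincide), while the crossings between the constraint sets of the approximating and limiting games are handled uniformly by \Cref{lem:hausdorff_agg_sets}.iv.
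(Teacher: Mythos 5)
Your proof is correct and follows essentially the same route as the paper's: retract the given profile toward the deep-interior profile $\zz$ of \Cref{lm:intprofile} by a fraction of order $\max(\mdset\esnu,D\esnu)/\rhomin$, then use \Cref{lm:FY} and \Cref{lem:hausdorff_agg_sets} together with the span hypotheses to transfer both the per-player and the aggregate feasibility. The only (immaterial) differences are your factor-of-two rescaling of $t$ and, in part i), retracting toward $\zz$ itself rather than toward its symmetrization $\bpsi\esnu(\zz)$ as the paper does.
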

\begin{proof}
i) Consider $\xx\esnu \in \FXS\esnu(A\esnu)$ and $\xxag\esnu=\int \xx\esnu$. 
 Let $\zz\esnu\eqd \bpsi\esnu(\zz)$ where $\zz$ is defined in \Cref{lm:intprofile}. 
Since for each $\th$,  $d(\zz_\th,\rbd \X_\th) > 3\rho > \mdset\esnu$, one has $\zz\esnu \in \FXS\esnu$ according to \Cref{lm:FY}.iv).

Define $\ww\esnu\eqd\xx\esnu+t(\zz\esnu-\xx\esnu) $ with $t\eqd\frac{\max(D\esnu,\mdset\esnu)}{\rhomin }<1$, then $\ww\esnu \in \FXS\esnu$ from convexity and:
\begin{itemize}[wide]
\item we have $\|\ww\esnu-\xx\esnu\|_2=\max(D\esnu,\mdset\esnu)\frac{\|\zz\esnu-\xx\esnu\|_2}{\rhomin } \leq \max(D\esnu,\mdset\esnu)\frac{2(\diamX+1)}{ \rhomin } \leq 2 K_{A}\max(D\esnu,\mdset\esnu) $;
\item let us show that $\ww\esnu\in \FX$ : from  \Cref{lm:FY}.iii), it is sufficient to show that
 $d(\ww\esnu_i,\rbd \X_i\esnu)  > \mdset_\i\esnu $. For that, we show $d(\zz_i\esnu, \rbd \X_i\esnu) \geq 2\rhomin$ which implies $d(\ww\esnu_i,\rbd \X_i\esnu) \geq 2 t \rhomin > \mdset\esnu$. 
For any arbitrary  $\yy_i \in B_{2\rhomin}( \zz_i\esnu) \cap \spa \X_i\esnu$, let $\yy_\th \eqd \zz_\th + \yy_i-\zz\esnu_i $ for $\th\in\Theta_i\esnu$. Then, $\yy_\th \in  B_{2\rhomin}(\zz_\th) \cap \spa \X_\th$ as $\spa \X_\th = \spa \X_i\esnu$, and $d(\yy_\th, \rbd \X_\th) \geq d(\zz_\th, \rbd \X_\th) -\norm{\zz_\th-\yy_\th} > 3 \rhomin - 2 \rhomin= \rhomin > \mdset\esnu$. Hence, from \Cref{lm:FY}.iv), one has $\yy_\th \in \X_i\esnu$ and, from convexity, $\yy_i= \frac{1}{\mu_i\esnu} \int_{\Theta_i} \yy_\th  \in \X_i\esnu$ which concludes;

\item  let us show  that $\wwag\esnu=\int \ww\esnu\in A$: from   \Cref{lem:hausdorff_agg_sets}.iii), it is sufficient to show that $\wwag\esnu \in \rlt A\esnu $ and $d(\wwag\esnu, \rbd A\esnu) > D\esnu$. First,  since $\max(\mdset\esnu, D\esnu) < \rhoZ$, one has  $d(\zzag, \rbd (\Sxag\esnu\cap A\esnu))   \geq 3 \rhoZ-\max(\mdset\esnu, D\esnu) \geq 2 \rhoZ$. 
The linear mapping $\xx \mapsto \int \xx$ maps the segment linking $\xx\esnu$ and $\zz\esnu$ in $\FX\esnu(A\esnu)$ to a segment linking $\xxag\esnu$ and $\zzag=\zzag\esnu$ in $\Sxag\esnu\cap A\esnu$. 
Hence, by the definition of $\ww\esnu$,
 $B_{2 t \rhomin }(\wwag\esnu) \cap\spa (\Sxag\esnu\cap A\esnu) \subset \Sxag\esnu\cap A\esnu$, because each point in $B_{2 t \rhomin }(\wwag\esnu) \cap\spa (\Sxag\esnu\cap A\esnu)$ is on the segment linking $\xxag\esnu$ and some point in $B_{2 \rhomin }(\zzag) \cap\spa (\Sxag\esnu\cap A\esnu)\subset \Sxag\esnu\cap A\esnu$. We conclude with $d(\wwag\esnu, \rbd(\Sxag\esnu\cap A\esnu))\geq  2 t \rho  =2 \max(D\esnu,\mdset\esnu) > D\esnu $. 

\end{itemize}

ii) The proof is similar and omitted.

\end{proof}

\begin{figure}[ht]\label{fig:proj}
\centering
\includegraphics[scale=1]{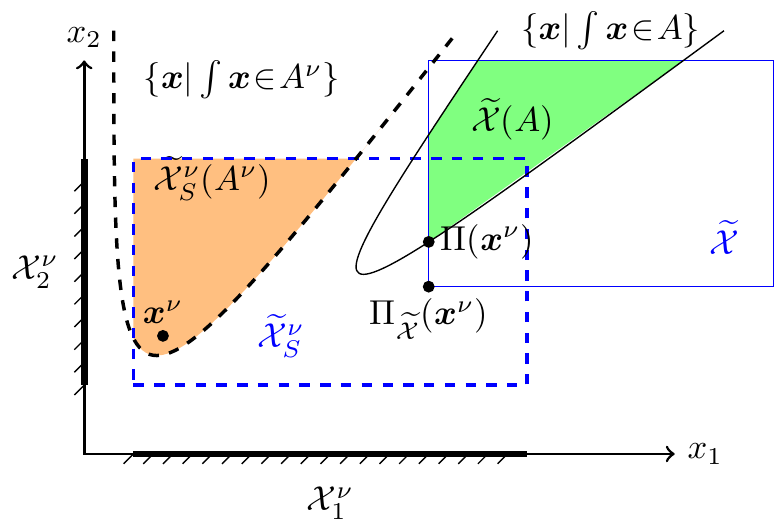}
\caption{Difference between projections on $\FX$ and on $\FX(A)$. (Since it is impossible to draw the graph of a $L^2$ action-profile space with a continuum of players, we illustrate the idea with two players.)}
\end{figure}

\begin{remark}[{Difference between unilateral projections of actions and  collective projection of the action profile}]
\Cref{lem:dist_agg_genized_sets} shows that $d_2(\xx\esnu, \Pi(\xx\esnu))\leq 2K_{A}\max(D\esnu,\mdset\esnu)$ and $d_2(\bpsi\esnu(\xx), \Pi\esnu(\bpsi\esnu(\xx))) \leq 2 K_{A}\max(D\esnu,\mdset\esnu)$, with $\Pi$ and $\Pi\esnu$ the projection functions onto $\FX(A)$ and $\FX\esnu(A\esnu)$. 
\Cref{lem:dist_agg_genized_sets} is of first importance for our proof of \Cref{thm:converge_with_u}: without it, we only have the convergence of individual, i.e. unilateral action spaces in the sequence of approximating games, to the unilateral action spaces in the nonatomic aggregative game, as shown in \Cref{lm:FY}. Without coupling constraints, this should be sufficient in the proof of the convergence of a sequence of SVWE.
 However, in the presence of coupling aggregative constraints, this convergence of unilateral action spaces is not enough.
  Given a profile in $\FXS\esnu(A\esnu)$, unilateral projection of each type $i$ player's action  onto the corresponding nonatomic player's action space in $G(A)$, i.e. from $\X\esnu_i$ to $\X_\th$, cannot guarantee that the resulting action profile is in $\FX(A)$, and vice versa.
   \Cref{lem:dist_agg_genized_sets} shows that, for each action profile $\xx\esnu \in \FXS\esnu(A\esnu)$, its projection on the space of nonatomic action profiles in $\FX(A)$ is  close to $\xx\esnu$, and vice versa. 
\end{remark}

We are finally ready to prove \Cref{thm:converge_with_u}.

\begin{proof}[Proof of \Cref{thm:converge_with_u}]
Fix $\snu\in \nit^*$, define  $\hz\esnu\eqd \Pi(\hxx\esnu) \in L^2([0,1], \rit^T) $.  Then $\hz\esnu\in \FX(A)$ is an action profile in nonatomic aggregative game $\Gna(A)$. 
By the definition of VWE (\Cref{def:ve-infinite}), we have $\int_0^1 \langle \g_{\sxx}(\th) , \ \sxx_\th  - \hz\esnu_\th  \rangle \dth\leq 0$. 

Secondly, by the definition of SVWE,  we have $\int_0^1 \langle \bh_{\hxx\esnu}(\th) , \ \hxx\esnu_\th  - \zz\esnu_\th  \rangle \dth\leq 0  $ for any $\zz\esnu\in\ \FX\esnu(A)$, where $\bh_{\hxx\esnu}(\th)= \nabla_1 f_\th\esnu(\hxx\esnu_\th,\xxag\esnu) = \nabla_1 f_i\esnu(\hxx\esnu_i,\xxag\esnu) \eqd \bh_{\hxx\esnu}(i) $, for each $\th \in \Theta_i\esnu$ and each type $i\in\I\esnu$.

 For all $\i\in \I\esnu$ and $\th\in \Theta\esnu_i$, by the definition of $\duti\esnu_i$ (cf. \Cref{eq:def_dut}), 
we have  $\norm{ \bh_{\hxx\esnu} -   \g_{\hxx\esnu} }_2  \leq \duti\esnu_i$.

Thirdly, $\|\hxx\esnu -\hz\esnu\|_2\leq 2 K_A\max(D\esnu,\mdset\esnu)$ by \Cref{lem:dist_agg_genized_sets}.

With these two results, while noticing that $\hxx\esnu_\th \leq \diamX+\mdset\esnu$ for all $\th$ by \Cref{lem:norm_xat_approx}, one has:
\begin{align}
&  \int_{\Theta} \big\langle   \g_{\sxx}(\th)- \g_{\hxx\esnu}(\th) , \ \sxx_\th-  \hxx\esnu_\th \big\rangle \dth\notag \\
=&  \int_{\Theta} \left\langle \g_{\sxx}(\th) , \ \sxx_\th  - \hz\esnu_\th  \right \rangle \dth + \int_{\Theta} \left\langle \g_{\sxx}(\th) , \  \hz\esnu_\th - \hxx\esnu_\th \right \rangle \dth \notag     \\
& +\int_{\Theta} \big\langle   \g_{\hxx\esnu}(\th) - \bh_{\hxx\esnu}(\th), \  \hxx\esnu_\th - \sxx_\th \big\rangle \dth\notag +\!    \int_{\Theta} \big\langle  \bh_{\hxx\esnu}(\th)  , \ \hxx\esnu_\th \!-\! \sxx_\th  \big\rangle \dth \notag \\
\leq &\, 0\!+  \norm{\g_{\sxx}}_2 \norm{\hz\!-\!\hxx\esnu}_2  \! 
+ \norm{ \g_{\hxx\esnu} - \bh_{\hxx\esnu}}_2 \norm{\hxx\esnu - \sxx }_2 
+ \! J\esnu\notag\\
\leq & \, 2 \Bdf  \, K_A\max(D\esnu,\mdset\esnu)  
+\,(2M+\mdset\esnu)\mduti\esnu  %
+J\esnu   \label{eq:th-firstbound}
\end{align}
where $J\esnu\eqd \int_{\Theta} \big\langle  \bh_{\hxx\esnu}(\th)  , \ \hxx\esnu_\th \!-\! \sxx_\th  \big\rangle \dth =  \sum_{\i\in\I\esnu}  \int_{\Theta\esnu_i} \big\langle \bh_{\hxx\esnu}(i) , \ \hxx\esnu_\th - \sxx_\th  \big\rangle \dth$.

Next, for the VWE $\sxx\in\FX(A)$, let
 $\syy\esnu=\bpsi(\sxx)\in  L^2([0,1],\M)$ and  $\sz\esnu \eqd \Pi\esnu(\syy\esnu) \in \FX\esnu(A\esnu)$:
\begin{align}
 J\esnu & =  \sum_{\i\in\I\esnu}   \big\langle \bh_{\hxx\esnu}(i) , \int_{\Theta\esnu_i}  \hxx\esnu_\th - \sxx_\th  \dth \big\rangle 
 = \sum_{\i\in\I\esnu}  \big\langle \bh_{\hxx\esnu}(i) ,  \mu_i\esnu (  \hxx\esnu_i - \syy\esnu_i )\big\rangle \\
 & = \sum_{\i\in\I\esnu}  \big\langle \bh_{\hxx\esnu}(i) ,  \mu_i\esnu (  \hxx\esnu_i - \sz\esnu_i )\big\rangle  
 + \sum_{\i\in\I\esnu}  \big\langle \bh_{\hxx\esnu}(i) ,  \mu_i\esnu (  \sz\esnu_i - \syy\esnu_i )\big\rangle \\
 & \leq 0 +   (\Bdf+\mduti\esnu) \norm{\sz\esnu- \syy\esnu}_2 \leq  (\Bdf+\mduti\esnu)2K_A\max(D\esnu,\mdset\esnu) \, ,\label{eq:Jnu}
\end{align}
because of the definition of SVWE $\hxx\esnu$ and \Cref{prop:SVWEfiniteChar}, the definition of $\mduti\esnu$ and  \Cref{lem:dist_agg_genized_sets}.$(i)$.

Let us summarize by combining \eqref{eq:th-firstbound} and \eqref{eq:Jnu}, and considering $\snu$ large enough such that $\mduti\esnu, \mdset\esnu \leq 1$:
\begin{equation}\label{eq:unpperboundmono}
\begin{aligned}  
\int_{\Theta}  \big\langle \g_{\sxx}(\th) &-\bh_{\hxx\esnu}(\th) ,  \sxx_\th-  \hxx\esnu_\th \big\rangle \dth \leq \Omega\esnu \text{ \ with \ } \Omega\esnu \eqd (4\Bdf + 1) K_A\max(D\esnu,\mdset\esnu) %
+ (2M+1)\mduti\esnu .
\end{aligned}
\end{equation}
Last,  using the monotonicity definitions (\Cref{def:mono_non}):
\begin{itemize}
\item if $\Gna$ is strongly monotone with modulus $\stgccvut$, then $ \stgccvut  \norm{ \hxx\esnu -  \sxx }^2_2  \leq \Omega\esnu  $;
\item if  $\Gna$ is aggregatively strongly monotone with modulus $\beta$, then $ \beta \| \hxxag\esnu - \sxxag\|^2\leq  \Omega\esnu $, 
\end{itemize}
which lead to the results announced in \Cref{thm:converge_with_u}.
\end{proof}

\begin{remark}
The strong monotonicity of the nonatomic aggregative game $\Gna$, either with respect to action profile or with respect to aggregate-action profile, is essential in this result. In contrast to finite-player games (cf. \cite{ wan2012coalition}), strict monotonicity is not enough to obtain such results using the same techniques. Indeed, since $L^2([0,1],\M^T)$ is only weakly compact, one cannot ensure that $\int_{\Theta}  \langle \g_{\sxx}(\th) -\bh_{\hxx\esnu}(\th) ,  \sxx_\th-  \hxx\esnu_\th \rangle \dth$ tends to $\int_{\Theta}  \langle \g_{\sxx}(\th) -\bh_{\hzz}(\th) ,  \sxx_\th-  \hzz_\th \rangle \dth$ in \eqref{eq:unpperboundmono}, where $\hzz$ is an accumulation point of $(\hxx\esnu)_{\snu}$ in the weak topology.
 \end{remark}

\subsection{Construction of a  sequence of finite-type approximating games}\label{subsec:construction}

As seen in our previous results, a nonatomic player $\th$ is characterized by two elements:  her action set $\X_\th$, and  her gradient $\deriv_1 f_\th$ defined from $\M^2$ to $\rit^T$: $(\xx,\yyag)\mapsto \deriv_1 f_\th(\xx, \yyag)$.

Note that it is the gradient of the cost function $\deriv_1 f_\th$, instead of the cost function $f_\th$ itself, that characterizes a nonatomic player's type. For example, two players $\th$ and $\xi$ with $\X_\th=\X_\xi$ and $f_\th(\xx,\yyag) \equiv f_\xi(\xx,\yyag) + C$ where $C$ is a strictly positive constant can be seen as identical in their behavior.

This section presents the construction of a sequence of finite-type approximating games for a given nonatomic aggregative game $\Gna$ in two particular cases:   1) the player characteristic profile $\th \mapsto (\X_\th, \deriv_1 f_\th)$ is piecewise continuous (cf. \Cref{def:continuity_nonatomic_game}) and, 2) \{$\X_\th,\th\in \Theta\}$ and  \{$f_\th, \th\in \Theta\}$ are respectively polytopes and functions parameterized by a finite number of real parameters.  
Those two cases are fairly general. As illustrated in \Cref{sec:example_energy}, they emerge naturally when the nonatomic game comes from the modeling of a population described by parametric probability distributions, the main motivation for considering infinite nonatomic games.

\paragraph{Case 1: Piecewise Continuous Characteristics -- Uniform Splitting} ~~\label{subsec:approx_uniform}
\begin{definition}[Continuity of nonatomic player characteristic profile]\label{def:continuity_nonatomic_game}
The player characteristic profile $\th \mapsto (\X_\th, \deriv_1 f_\th)$ in nonatomic aggregative game $\Gna$ is \emph{continuous} at $\th \in \Theta$ if, for all $\varepsilon>0$, there exists $\eta>0$ such that: for each $\th'\in \Theta$
\begin{equation}\label{eq:character_continu}
 |\th-\th'| \leq \eta \ \Rightarrow
\begin{cases}
 d_H(\X_\th, \X_{\th'}) \leq \varepsilon \, \\
 \sup_{(\xx,\yyag)\in \M\times \M} \norm{ 
 \deriv_1 f_\th(\xx,\yyag) -
  \deriv_1 f_{\th'}(\xx,\yyag) } \leq \varepsilon  \ .
  \end{cases}
\end{equation}

If \eqref{eq:character_continu} is true for all $\th$ and $\th'$ on an interval $\Theta' \subset \Theta$,  then the player characteristic profile is \emph{uniformly continuous} on $\Theta'$. 
\end{definition}

Assume that the player characteristic profile $\th \mapsto (\X_\th, \deriv_1 f_\th)$ of nonatomic aggregative game $\Gna$ is piecewise continuous, with a finite number $K$ of discontinuity points $\sigma_0=0 \leq \disth_1 <\disth_2 < \dots < \disth_K\leq \sigma_K=1$, and that it is uniformly continuous on $( \sigma_k, \sigma_{k+1})$, for each $k\in  \{0 ,\dots ,K-1\}$.

For $\nu \in \nit^*$, define an ordered set of $I_\snu$ cutting points by $
\{\cutp_i\esnu, i=0,\ldots,I\esnu\} :=\left\{ \tfrac{k}{\nu} \right\}_{0\leq k \leq \nu} \cup \{\sigma_k   \}_{1\leq k \leq K}$
 and the corresponding partition $(\Theta_\i\esnu)_{i\in \I\esnu}$ of $\Theta$ by:
\vspace{-0.15cm}
\begin{equation*}
\Theta_i\esnu= [\cutp_{i-1}\esnu,\cutp_{i}\esnu ) \text{ for }  \i \in \{1, \dots, I\esnu-1\}\  ;\quad \Theta_{ I\esnu}\esnu = [\cutp_{ I_\snu-1}\esnu, 1 ].
\end{equation*}
Hence, $\mu_i\esnu=\cutp_{i}\esnu-\cutp_{i-1}\esnu$. Denote $\bar{\cutp}_{i}\esnu= \frac{\cutp_{i-1}\esnu +  \cutp_{i}\esnu}{2}$.

\begin{proposition}\label{prop:approx_seq_continuous}
Let \Cref{ass_X_nonat,ass_measur,ass_ut_nonat,assp:gradientContinuity} hold, and assume that $\{\spa \X_\th \}_{\th\in\Theta}$ has a finite number of elements. For $\snu \in \nit^*$, consider the finite-type game $\Gna\esnu(A\esnu)$ with aggregative constraint $A\esnu\eqd A$, set of types $\I\esnu\eqd\{ 1  \dots  I\esnu\}$, where for each type $\i\in\I\esnu$:
\vspace{-0.15cm}
\begin{equation*}
\X_i\esnu \eqd \X_{ \bar{\cutp}_{i}\esnu} 
\text{ \ and \ } f_i\esnu(\xx,\yyag)\eqd f_{ \bar{\cutp}_{i}\esnu }\Big(\xx ,\yyag\Big) ,\;  \, \forall (\xx,\yyag)\in \M \times \M.
\end{equation*}
Then $\big(\Gna\esnu(A)\big)_{\snu}=\big( \I\esnu, \FX\esnu,A, (f\esnu_\i)_{\i\in \I\esnu} \big)_{\snu} $ is a sequence of finite-type approximating games of nonatomic aggregative game $\Gna(A)$. 
\end{proposition}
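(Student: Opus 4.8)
The plan is to verify directly the three defining conditions i), ii), iii) of \Cref{def:approx_seq}. I would first declare the measure-zero cell $\Theta_0\esnu$ of that definition to be the finite set of discontinuity points $\{\disth_1,\dots,\disth_K\}$ (together with the endpoints), so that after removing these the remaining cells $\Theta_i\esnu$ each lie inside a single \emph{open} continuity region $(\disth_k,\disth_{k+1})$. Condition iii) is then immediate: since $A\esnu=A$ we have $D\esnu=d_H(A,A)=0\to 0$ and $\spa A\esnu=\spa A$ trivially, so nothing is left to check there.

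The core of the argument is a mesh estimate combined with uniform continuity. Because the cutting points contain the uniform grid $\{k/\nu\}_{0\le k\le\nu}$, consecutive cutting points are at most $1/\nu$ apart, hence $\mu_i\esnu=\cutp_i\esnu-\cutp_{i-1}\esnu\le 1/\nu$ for every $i$, and therefore $|\th-\bar{\cutp}_i\esnu|\le \mu_i\esnu/2\le 1/(2\nu)$ for every $\th\in\Theta_i\esnu$. Now fix $\varepsilon>0$. By piecewise uniform continuity (\Cref{def:continuity_nonatomic_game}) there is, on each of the finitely many pieces, a modulus $\eta_k>0$; set $\eta=\min_k\eta_k$. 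For $\nu>1/\eta$ every cell has diameter below $\eta$, and both $\th$ and the midpoint $\bar{\cutp}_i\esnu$ belong to the same open continuity interval, so \eqref{eq:character_continu} applied with $\th'=\bar{\cutp}_i\esnu$ gives $d_H(\X_\th,\X_i\esnu)\le\varepsilon$ and $\sup_{(\xx,\yyag)\in\M^2}\norm{\deriv_1 f_i\esnu(\xx,\yyag)-\deriv_1 f_\th(\xx,\yyag)}\le\varepsilon$, recalling that $\X_i\esnu=\X_{\bar{\cutp}_i\esnu}$ and $f_i\esnu=f_{\bar{\cutp}_i\esnu}$. Taking the supremum over $\th\in\Theta_i\esnu$ in \eqref{eq:def_dset}--\eqref{eq:def_dut} and then the maximum over $i$ yields $\mdset\esnu\le\varepsilon$ and $\mduti\esnu\le\varepsilon$; since $\varepsilon$ is arbitrary, $\mdset\esnu\to 0$ and $\mduti\esnu\to 0$, which are the quantitative parts of i) and ii).

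The step I expect to be the main obstacle is the remaining requirement in i), namely $\spa\X_i\esnu=\spa\X_\th$ for all $\th\in\Theta_i\esnu$, i.e. that the span be \emph{constant} on each cell. Hausdorff convergence alone does not control the span (a segment shrinking to a point is Hausdorff-continuous while its span collapses), so this is exactly where the hypothesis that $\{\spa\X_\th\}_{\th\in\Theta}$ is finite must be exploited. The plan is to refine the cutting set by adjoining the transition points of the span map $\th\mapsto\spa\X_\th$, so that every refined cell carries a single span shared by its midpoint; since only finitely many extra points are inserted (in the regular case), the mesh bound $\mu_i\esnu\le 1/\nu$ and all the estimates above are untouched. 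I would flag that this refinement needs the span to be piecewise constant with tame transition set, which is automatic in the common situation noted after \Cref{def:approx_seq} where $\spa\X_\th$ is one fixed subspace for all $\th$. Finally I would observe that \Cref{ass_X_nonat,ass_ut_nonat,assp:gradientContinuity} are inherited by each $\Gna\esnu(A)$, because its data are literally the values $\X_{\bar{\cutp}_i\esnu}$ and $f_{\bar{\cutp}_i\esnu}$ of the original game, so the constructed sequence is a genuine finite-type approximating sequence.
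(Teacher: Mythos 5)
Your argument is essentially the paper's: the same mesh bound $\mu_i\esnu\le 1/\nu$, the same appeal to a common modulus of uniform continuity on the finitely many continuity intervals to get $\mdset\esnu\to0$ and $\mduti\esnu\to0$, and the same observation that $A\esnu=A$ makes condition iii) trivial. The one place you diverge is the span requirement $\spa\X_i\esnu=\spa\X_\th$ for $\th\in\Theta_i\esnu$. You propose to adjoin the \emph{transition points} of $\th\mapsto\spa\X_\th$ to the cutting set, and you correctly flag that this only works if the span map is piecewise constant with a tame transition set --- an assumption not granted by the hypotheses. The paper avoids this extra assumption: it simply subdivides each cell $\Theta_i\esnu$ into the finitely many (measurable, but not necessarily interval) groups $\{\th\in\Theta_i\esnu:\spa\X_\th=V\}$, one for each of the finitely many values $V$ of the span; since \Cref{def:approx_seq} only asks for a measurable partition of $\Theta$, not a partition into intervals, this grouping is legitimate and needs no structure on the transition set. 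All your other estimates survive this subdivision unchanged, so replacing your refinement-by-transition-points with the paper's grouping-by-span-value closes the only real gap in your proposal.
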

\begin{proof} 
Let us show the three points required by \Cref{def:approx_seq} as follows.

i) Given an arbitrary $\varepsilon>0$, there is a common modulus of uniform continuity $\eta$ such that \eqref{eq:character_continu} holds for all the intervals $(\sigma_k,\sigma_{k+1})$. For $\nu$ large enough, one has, for each $\i \in\I\esnu$, 
 $\mu_\i\esnu< \eta$ so that for all $\th \in \Theta_\i\esnu$, $|\bar{\cutp}_{i}\esnu-\th|< \eta$; hence $d_H\big( \X_\th, \X_\i\esnu\big)=d_H( \X_\th, \X_{ \bar{\cutp}_{i}\esnu}) < \varepsilon$. 

ii) According to the continuity property, for all $(\xx,\yyag)\in \M^2$:
\begin{equation*}
\norm{  \deriv_1 f\esnu_\i\left(\txt {\mu_i\esnu } \xx, \yyag \right) \ -
\  \deriv_1 f_\th(\xx,\yyag) } = \norm{  \deriv_1 f_{ \bar{\cutp}_{i}\esnu}\Big(\xx , \yyag \Big) \ - \ \deriv_1 f_\th(\xx,\yyag)  }< \varepsilon . 
\end{equation*}

To be rigorous, one would  we need to ensure$\spa \X_\th$ to be the same for all $\th\in \Theta\esnu_i$: if not, one can further divide $\Theta\esnu_i$ into a finite number groups so that  players in each group have the same $\spa \X_\th$. This is possible because $\{\spa \X_\th\}_\th$ is finite. %

iii) By definition, $D\esnu=0$.
\end{proof}

\paragraph{Case 2: Finite-dimensions Parameterized Characteristics -- Meshgrid Approximation}~~ %

Assume that the nonatomic aggregative game $\Gna$ satisfy two conditions:

(i) The feasible action sets are $\dimp$-dimensional polytopes: there exists a constant real-valued $\dimp \times T$ matrix $\A$, and a bounded mapping $\bb:\Theta\rightarrow\rit^\dimp$, such that for any $\th$, $\X_\th = \{\xx \in \rit^T  :  \A\xx \leq \bb_\th \}$, which is a nonempty, bounded, closed and convex polytope in $\rit^T$.

(ii) There is a bounded mapping $\ss:\Theta \rightarrow \rit^l$ 
such that for any $\th \in \Theta, f_\th(\cdot,\cdot)= f(\cdot,\cdot\,;\ss_\th)$.   Furthermore, for all $(\xx,\yyag)\in \M^2$, $\deriv_1 f(\xx,\yyag;\cdot)$ is Lipschitz-continuous in $\ss$ and with a Lipschitz constant $L_3$, independent of $\xx$ and $\yyag$.

Denote $\ub_k=  \min_\th b_{\th,k} $, $\ob_k = \max_\th b_{\th,k}$ for $k \in \{1\dots \dimp \}$ and $\us_k= \min_\th s_{\th,k} $, $\os_k = \max_\th s_{\th,k}$ for $k\in \{1\dots l\}$.  The characteristics of player $\th$ are parameterized by point $(\bb_\th, \ss_\th)$ in $\prod_{k=1}^\dimp [\ub_k,\ob_k] \times \prod_{k=1}^l [\us_k,\os_k]$, a compact subset of $\rit^{\dimp+l}$.

Fix $\nu \in \nit^*$, consider a uniform partition of the compact set $\prod_{k=1}^\dimp [\ub_k,\ob_k] \times \prod_{k=1}^l [\us_k,\os_k]$, obtained by dividing each dimension of this compact set into $\snu$ equal parts. Hence, the partition is composed of $I\esnu \eqd \nu^{\dimp+l}$ equal-sized subsets of $\prod_{k=1}^\dimp [\ub_k,\ob_k] \times \prod_{k=1}^l [\us_k,\os_k]$. The  cutting points of the partition are  $\ub_{k,n_k}\eqd \ub_k+\frac{n_k}{\nu}(\ob_k-\ub_k) $ for $k\in \{1,\ldots, \dimp\}$, and $\us_{k,n_k}\eqd \us_k+\frac{n_k}{\nu}(\os_k-\us_k)$ for $k\in \{1,\ldots, l\}$, with $n_k\in \{0,\dots, \nu \}$. Let the set of \emph{vectorial} indices, indexing the partition, be denoted by:
\begin{equation*}
\Gamma\esnu \eqd \{\nn=(n_k)_{k=1}^{\dimp+l} \in \nit^{\dimp+l} \,|\, n_k\in \{1,\ldots, \nu\}\}\ .
\end{equation*}
Define the corresponding partition of the interval $\Theta$: $\Theta=\dot{\bigcup}_{\nn\in \Gamma\esnu} \Theta\esnu_{\nn}$, where:
\begin{align*}
\Theta\esnu_{\nn} \eqd  & \Big\{\th\in \Theta : b_{\th,k} \in [\ub_{k,n_k- 1},\ub_{k,n_k} ) \text{ for } 1\leq k \leq \dimp; \,   s_{\th, k}  \in [\us_{k,n_k-1},\us_{k,n_k}  ) \text{ for } 1\leq k \leq l \Big\}. 
\end{align*}
To be rigorous, when $\ub_{k,n_k}=\ob_{k}$ or $\us_{k,n_k} =\os_k$, the parameter interval is closed at the right. 

Finally, define the set of players $\I\esnu$ as the elements $\nn$ in $\Gamma\esnu$ such that $\mu(\Theta\esnu_{\nn}) >0$.

\begin{proposition}\label{prop:approx_seq_finitedim}
For $\snu \in \nit^*$, let the nonatomic finite-type game $\Gna\esnu(A\esnu)$ with an aggregative constraint $A\esnu\eqd A$, set of types $\I\esnu\eqd\{ \nn\in \Gamma\esnu : \mu(\Theta\esnu_{\nn})>0\}$ and, for each type $\nn\in\I\esnu$, 
\begin{align*}
 \X_{\nn}\esnu &\eqd  \{\xx \in \rit^T  | \A\xx \leq \txt\int_{\Theta\esnu_{\nn}}\bb_{\th}\, \text{d}\th \}\ , \\
  f_{\nn}\esnu(\xx,\yyag) &\eqd  \mu_{\nn}\esnu f\big( \txt\frac{1}{\mu_{\nn}\esnu} \xx ,\yyag ;\txt\frac{1}{\mu_{\nn}\esnu}\txt\int_{\Theta_{\nn}\esnu} \ss_{\th} \dth \big)  , \quad \forall (\xx,\yyag)\in \mu_i\esnu\M\times \M.
\end{align*}
Then, under \Cref{ass_X_nonat,ass_ut_nonat,ass_measur,assp:gradientContinuity},  $(\Gna\esnu(A))_{\snu}=\big( \I\esnu, \FX\esnu,A, (f\esnu_\i)_{\i\in \I\esnu} \big)_{\snu} $
 is a sequence of finite-type approximating games of the  nonatomic aggregative game $\Gna(A)$ with an aggregative constraint. 
\end{proposition}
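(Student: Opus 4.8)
The plan is to verify, one by one, the three defining conditions of \Cref{def:approx_seq} for the meshgrid construction, exploiting that each player is now summarized by the finite-dimensional parameter $(\bb_\th,\ss_\th)$ lying in the compact box $\prod_{k=1}^{\dimp}[\ub_k,\ob_k]\times\prod_{k=1}^{l}[\us_k,\os_k]$ and that this box is cut into cells of side $1/\snu$ in every coordinate. The single quantitative fact driving everything is that for each cell $\nn\in\I\esnu$ and each $\th\in\Theta\esnu_{\nn}$, the parameter of $\th$ differs from the cell average $(\bar{\bb}_{\nn},\bar{\ss}_{\nn})\eqd\tfrac{1}{\mu_{\nn}\esnu}\int_{\Theta\esnu_{\nn}}(\bb_\th,\ss_\th)\dth$ by at most the cell diameter, so that $\norm{\bb_\th-\bar{\bb}_{\nn}}\le\tfrac{\sqrt{\dimp}}{\snu}\max_k(\ob_k-\ub_k)$ and $\norm{\ss_\th-\bar{\ss}_{\nn}}\le\tfrac{\sqrt{l}}{\snu}\max_k(\os_k-\us_k)$, both tending to $0$ uniformly in $\nn$ and $\th$ as $\snu\to\infty$. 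Because $\A$ is linear, the type-$\nn$ data are, after the measure rescaling built into the construction, exactly the characteristics of the fictitious player with parameter $(\bar{\bb}_{\nn},\bar{\ss}_{\nn})$: one checks $\tfrac{1}{\mu_{\nn}\esnu}\X\esnu_{\nn}=\{\xx:\A\xx\le\bar{\bb}_{\nn}\}$ and $\tfrac{1}{\mu_{\nn}\esnu}f\esnu_{\nn}(\mu_{\nn}\esnu\,\cdot,\cdot)=f(\cdot,\cdot\,;\bar{\ss}_{\nn})$, and it is these per-capita quantities that are to be compared with $\X_\th$ and $f_\th$.

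For the first condition I would bound the Hausdorff distance between $\X_\th=\{\xx:\A\xx\le\bb_\th\}$ and the per-capita set $\{\xx:\A\xx\le\bar{\bb}_{\nn}\}$ via the stability of polyhedra under right-hand-side perturbations. Since $\A$ is a fixed matrix, Hoffman's error bound furnishes a constant $H(\A)$ with $d(\xx,\{\A\yy\le\bb'\})\le H(\A)\norm{(\A\xx-\bb')_+}$ whenever the target polytope is nonempty; applying this to points of one polytope relative to the other and using $\A\xx\le\bb$ gives $d_H(\{\A\xx\le\bb\},\{\A\xx\le\bb'\})\le H(\A)\norm{\bb-\bb'}$ for any two nonempty members of the family (nonemptiness of $\{\A\xx\le\bar{\bb}_{\nn}\}$ follows by integrating a measurable selection $\xx_\th\in\X_\th$). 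Combined with the diameter estimate this yields $\dset_{\nn}\esnu\le H(\A)\tfrac{\sqrt{\dimp}}{\snu}\max_k(\ob_k-\ub_k)$ uniformly in $\nn$, hence $\mdset\esnu\to0$. The accompanying span requirement $\spa\X\esnu_{\nn}=\spa\X_\th$ is handled as in the proof of \Cref{prop:approx_seq_continuous}: the linear span of $\{\A\xx\le\bb\}$ is governed by the subset of rows of $\A$ forced to equality and hence takes only finitely many values over the box, so each $\Theta\esnu_{\nn}$ can, if necessary, be refined into the finitely many subgroups of players sharing a common span (rescaling by $\mu_{\nn}\esnu$ leaves the span unchanged).

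The second condition is the most transparent and uses hypothesis (ii) directly: for every $(\xx,\yyag)\in\M^2$ and $\th\in\Theta\esnu_{\nn}$, the Lipschitz continuity of $\deriv_1 f(\xx,\yyag;\cdot)$ in $\ss$ gives $\norm{\deriv_1 f(\xx,\yyag;\bar{\ss}_{\nn})-\deriv_1 f(\xx,\yyag;\ss_\th)}\le L_3\norm{\ss_\th-\bar{\ss}_{\nn}}$, and the right-hand side is bounded by $L_3\tfrac{\sqrt{l}}{\snu}\max_k(\os_k-\us_k)$ independently of $(\xx,\yyag)$, $\th$ and $\nn$; here $\deriv_1 f\esnu_{\nn}(\mu_{\nn}\esnu\xx,\yyag)=\deriv_1 f(\xx,\yyag;\bar{\ss}_{\nn})$ by the chain rule, the $\mu_{\nn}\esnu$ prefactor cancelling the inner $1/\mu_{\nn}\esnu$. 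Taking suprema over $(\xx,\yyag)$ and $\th$ and the maximum over $\nn$ shows $\mduti\esnu\to0$. The third condition is immediate, since the construction sets $A\esnu\eqd A$, whence $D\esnu=d_H(A,A)=0$ and $\spa A\esnu=\spa A$ for every $\snu$.

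The main obstacle is the first condition. Whereas the cost gradients depend on the parameters through a function that is Lipschitz by assumption, the action sets depend on $\bb_\th$ through the solution set of a linear inequality system, whose Hausdorff convergence is a statement about the stability of polyhedra rather than ordinary continuity; the clean tool is Hoffman's lemma, which I would state precisely and apply to obtain the uniform constant $H(\A)$. The secondary difficulty, also concentrated in the first condition, is the span bookkeeping: at parameter values where $\{\A\xx\le\bb\}$ changes dimension its affine structure jumps, which is exactly why \Cref{def:approx_seq} imposes $\spa\X\esnu_{\nn}=\spa\X_\th$ and why the cell must be refined along the finitely many possible spans. Once the uniform Lipschitz bound on $\bb\mapsto\{\A\xx\le\bb\}$ and this span refinement are in place, the three required limits all follow from the single elementary fact that the mesh diameter $1/\snu$ tends to $0$.
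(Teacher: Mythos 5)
Your proof is correct and follows essentially the same route as the paper's: verify the three conditions of \Cref{def:approx_seq}, bound $\dset\esnu_{\nn}$ by the Lipschitz stability of the polytope $\{\A\xx\le\bb\}$ under right-hand-side perturbations (the paper invokes a result generalized from Batson where you invoke Hoffman's error bound, but these furnish the same uniform constant), bound $\duti\esnu_{\nn}$ via the assumed Lipschitz continuity of $\deriv_1 f(\xx,\yyag;\cdot)$ in $\ss$, and observe $D\esnu=0$ since $A\esnu=A$. Your treatment of the span condition by refining cells along the finitely many possible values of $\spa\{\A\xx\le\bb\}$ is in fact slightly more careful than the paper, which merely asserts that the finiteness of $\{\spa\X_\th\}_\th$ is naturally satisfied here.
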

\begin{proof} 
Let us show the three properties required by \Cref{def:approx_seq} as follows.

i) For each $\nn\in\I\esnu$, $ \X_{\nn}\esnu =\left\{\xx \in \rit^T  : \A\xx \leq \txt\frac{1}{\mu_{\nn}\esnu}\txt\int_{\Theta\esnu_{\nn}}\bb_{\th}\, \dth \right\}$. Then, by a result generalized from \cite[Thm. 4.1]{batson1987combinatorial}, 
 there is a constant $C_0$ such that, for each $\th'\in \Theta_{\nn}\esnu$: $d_H\left( \X_{\th'},  \X_{\nn}\esnu \right) \leq C_0 \norm{ \bb_{\th'}-  \frac{1}{\mu_{\nn}\esnu}\int_{\Theta\esnu_{\nn}}\bb_{\th}\, \text{d}\th }
\leq \frac{C_0}{\nu} \norm{ \bm{\ob} -\bm{\ub} }$. Hence, $\mdset\esnu$ tends to 0.  Note that in this case, the assumption of $\{\spa \X_\th \}_{\th\in\Theta}$ being finite is naturally satisfied.

ii) For each  $\nn \in \I\esnu$ and each $\th'\in \Theta_{\nn}\esnu$,  for all $(\xx,\yyag)\in \M^2$, one has:
\begin{align*}
 \norm{ \deriv_1 f\esnu_{\nn}  (\xx, \yyag) \ -  \ \deriv_1 f_{\th'}(\xx,\yyag) }
& =    \norm{  \deriv_1 f\big(  \xx, \yyag ; \txt\frac{1}{\mu_{\nn}\esnu}\txt\int_{\Theta_{\nn}\esnu} \ss_{\th} \dth\big) \ - \ \deriv_1 f(\xx,\yyag;\ss_{\th'}) }\\
 &\leq L_3 \|\frac{1}{\mu_{\nn}\esnu}\int_{\Theta_{\nn}\esnu} \ss_{\th} \dth - \ss_{\th'} \| \leq  \frac{L_3}{\snu} \norm{\bm{\os}-\bm{\us} } \ ,
\end{align*} 
by the Lipschitz continuity of $\deriv_1 f(\xx,\yyag;\cdot)$. Hence, $\mduti\esnu$ tends to 0.

iii) By definition,  $D\esnu=0$.
\end{proof}

\begin{remark}  
In \Cref{prop:approx_seq_finitedim}, instead of the average value of the characteristics of nonatomic players on $\Theta_{\nn}\esnu $, one can use the characteristic value of any nonatomic player in $\Theta_{\nn}\esnu$. 
\end{remark}

\begin{remark}
By construction, in both sequences above, the compacity and convexity of the feasibility sets $(\X_i)_i$ and the convexity and continuity of cost functions $(f_i)_i$ are naturally inherited from the properties assumed on $(\X_\th)_\th$ and $(f_\th)_\th$. 
This should often be the case when building a sequence of approximating games from a nonatomic game with an infinity of types.
\end{remark}

\section{Illustration on a Smart Grid Example} 
\label{sec:example_energy}

\newcommand{\Em}{E_{\mr{max}}}
\newcommand{\Et}{E_{\mr{tot}}}
\newcommand{\xto}{x_{\th,O}}
\newcommand{\xtp}{x_{\th,P}}
In this section the results are derived on a simple example for illustration, in the framework stated in introduction. In this example, we will be able to compute explicitly the aggregate equilibrium of the infinite-type nonatomic game.
\medskip
 
We suppose that the energy operator has access to the probability distribution of the amount of flexible energy in the $N=30$ millions French households: let us assume that this distribution is uniform on $[0, \Em]$ with $\Em=20$kWh (kiloWatthour), that is $\phi_E(E)=\frac{1}{\Em}$ for $E \in [0,\Em]$.

 Then the quantile function (or inverse cumulative distribution function), scaled by the population size, is given by $E_\th= F_E^{-1}(\th) = \th \Em N$, for each $\th \in \Theta=[0,1]$. In this case, the action set mapping $\X_.: \Theta \rightrightarrows \rit^2$ is given by:
 \begin{equation*}
 \forall \th \in \Theta, \ \X_\th = \left\{\xx_\th= (\xto,\xtp)\in\rit^2_+ \ | \ \xto+ \xtp = E_\th \right\}\ , 
 \end{equation*}
 which gives an infinity of different action sets. 
Let us consider, as said in the introduction, that there are two prices:
\begin{equation*}
c_O(\xxag)= \frac{a_O}{N} \xag_O \text{ and } c_P(\xxag)=\frac{a_P}{N} \xag_P
\end{equation*}  
for off peak and on peak periods, with $a_P>a_O$, that depend only the aggregate energy  on off peak period $X_O$ and on peak period $X_P$ (or rather on the average energy that consumers ask on these periods). 
Thus, the cost function of each player $\th$ is given, as in the example of public products game given by \eqref{eq:common_form_cost}:
\begin{equation*}
\ \forall\xx_\th \in \X_\th , \ f_\th(\xx_\th)= \xto \times  c_O(\xxag)+ \xtp  \times   c_P(\xxag) \ = \langle \xx_\th, \cc(\xxag) \rangle ,
\end{equation*}
where $\cc= (c_0, c_P)$.
Hence, all players have the same cost function: the infinite number of types is only due to the infinite number of different action sets.

Owing to to \Cref{prop:monotonemap}, the nonatomic game $G$ obtained is aggregatively strongly monotone with modulus $\beta=\frac{a_O}{N}$. However, the game is not strongly monotone.

It turns out that on this toy example, one can directly compute the aggregate profile of the VWE, as the IDVI of \Cref{def:ve-infinite} asks to find $\sxx \in \FX$ %
 such that:
\begin{align}
& \int_{\Theta} \langle \g_{\sxx} (\th), \xx_\th - \sxx_\th \rangle \dth \geq 0,\quad \forall \xx \in \FX %
 \nonumber \\
\Longleftrightarrow  \  &  \int_{\Theta} \langle \cc(\xxag^*), \xx_\th - \sxx_\th \rangle \dth \geq 0,\quad \forall \xx \in \FX %
 \nonumber \\
\Longleftrightarrow \   &  \langle \cc(\xxag^*), \xxag - \xxag^* \rangle \geq 0,\quad \forall \xxag \in \Sxag %
\ . \label{eq:VWEsimpl}
\end{align}
This simplification holds because, for each $\th$, $\g_{\sxx} (\th)$ depends only on the aggregate $\xxag^*$ (which would not be the case for general nonlinear cost functions). As a result the VI obtained is of finite dimension. 

In this example, the aggregate action set $\Sxag$  can also be characterized easily, although this would not be the case for arbitrary  sets $(\X_\th)_\th$. In fact, as the aggregate flexible energy available is $\Et \eqd
\int_{\Theta} E_\th \dth =\frac{1}{2}N\Em$, we obtain:
\begin{equation}
\Sxag = \left\{ (X_O, X_P) \in \rit^2_+ \ | \ X_O + X_P = \Et \right\} \ .
\end{equation}
Indeed, if $X_O + X_P=\Et $, then taking $\xx_\th= \xxag \frac{ E_\th }{\Et}$, we have  $\xx \in \FX$ and $\int_\Theta \xx_\th \dth= \xxag$. The converse inclusion is clear.  Consequently, we obtain from \eqref{eq:VWEsimpl} that $\xxag^*$ is the solution of the quadratic program:
 \begin{align*}
& \min_{\xxag} \tfrac{a_O}{N} \times  \tfrac{1}{2}\xag_O^2 + \tfrac{a_P}{N} \times \tfrac{1}{2} \xag_P^2 \\
 &\xag_O + \xag_P= \Et \\
&  0 \leq \xag_O, \xag_P %
 \end{align*}
 that is: $\xxag^*= (\xag_O^*,\xag_P^*)= (\tfrac{a_P}{a_O+a_P}\Et, \tfrac{a_O}{a_O+a_P} \Et)$.
 
 \medskip
 
 Now, let us define a sequence of finite-type approximating games $G\esnu$ to approximate $G$, with for each $\snu \in \nit^*$, $I\esnu= \snu $. Let us drop the index $\snu$ for simplicity in the remaining. Let us split up the population uniformly with $\Theta_i\esnu=[\tfrac{i-1}{I},\tfrac{i}{I}]$, for each $i\in\I=\{1, \dots ,I\}$. 
 
Because of the linearity of $\th \mapsto E_\th$, considering the uniform approximation detailed in \Cref{subsec:approx_uniform} case 1, one will obtain directly $\xxag^*$. 
For the example, let us rather consider the approximating games defined with, for each $i \in \I$:
\begin{equation} \X_i \eqd   \{\xx_i \in\rit^2_+ \ | \ x_{i,O}+ x_{i,P} = E_i \eqd \tfrac{i}{I}N \Em \} \ .
\end{equation} Besides, we naturally take $f_i\eqd f_\th$ for each $i$ (as the cost function is the same for each player).%

One can observe that we get for each $i$, $\dset_i=\frac{N \Em }{I}=\frac{2 \Et }{I} \rightarrow 0$, and of course $\duti_i=0$. 

On the other hand, computing the aggregate approximate equilibrium, similarly to  \eqref{eq:VWEsimpl}, one  obtains:
\begin{equation*}
\hxxag^I= \left(\tfrac{a_P}{a_O+a_P}\Et(1+\tfrac{1}{I}), \tfrac{a_O}{a_O+a_P} \Et(1+\tfrac{1}{I}) \right) = (1+\tfrac{1}{I}) \xxag^* \ ,
\end{equation*}
and thus we have:
\begin{equation}
\| \hxxag^I - \sxxag \| =\frac{ \|\sxxag \|}{I} =  \frac{\sqrt{ a_O^2+a_P^2} }{a_O+a_P} \Et \times \frac{1}{I} .
\end{equation}
However, from \Cref{thm:converge_with_u}, as we can compute 
$\Bdf= \max_{\xxag \in \Sxag} \norm{ \cc(\xxag) }=\frac{a_P}{N}\Et $, we obtain the more conservative convergence bound (we can replace $(4\Bdf +1)$ by $2\Bdf$ since $D\esnu= \mduti\esnu=0$):
\begin{align*}
&\| \hxxag^I - \sxxag \|^2 \leq \frac{1}{\beta} 2 \Bdf \mdset^I = \frac{N}{a_O} 2 \frac{a_P}{N} \Et \times \frac{2 \Et}{I} \\
\Longleftrightarrow \ & \| \hxxag^I - \sxxag \| \leq 2\Et  \sqrt{\frac{a_P}{a_O}} \times \frac{1}{ \sqrt{I}} \ . 
\end{align*}

\section{Conclusion}\label{sec:conclusion}

\Cref{thm:converge_with_u} provides a precise theoretical result for the use of symmetric, finite-dimensional, (variational) Wardrop equilibria (S(V)WE) as an approximation of the (V)WE in a strongly monotone or aggregatively strongly monotone nonatomic aggregative game with an infinity of players types, with or without aggregative constraints. There are numerous research themes related to this result and our topic in general.

First, one needs to find efficient algorithms for the computation of finite dimensional variational inequalities arising as the characterization of SVWE. An extensive literature exists in this regard but our particular case of aggregative game with aggregative constraints may lead to special methods or improvements on existing results \cite{gramm2017}.

Then, the extension of evolutionary dynamics for population games and the related algorithms, to the framework of nonatomic games with infinitely many classes of players can be non trivial.
 A recent work \cite{Hadikhanloo2017} proposes online learning methods for population games with heterogeneous convex action sets.
 The presence of aggregate constraints adds two additional difficulties for considering %
 evolutionary dynamics in population games, as those dynamics are based on unilateral adaptations from players. On the one hand, in the presence of coupling constraints,  unilateral deviations by players may well lead to an action profile violating the coupling constraint. On the other hand, a feasible deviation in the action profile cannot always be decomposed into unilateral deviations of players. 

Last, our results are limited to monotone games and the convergence result is limited to strongly monotone games. The study of nonatomic aggregative games that are not monotone needs probably other approaches. Indeed, even for population games where there are only finitely many types of players, there exist much fewer results for games that are not linear, potential or monotone. 
The question of whether or not it is possible to obtain similar convergence results as those stated in \Cref{thm:converge_with_u}  without monotonicity assumptions constitutes an interesting path for future work.

\section*{Acknowledgments}
\noindent We thank St\'ephane Gaubert, Sylvain Sorin,  Marco Mazzola, Olivier Beaude and Nadia Oudjane for their insightful comments.

\newpage

\begin{appendix}

\section{Extension of results to the subdifferentiable case} \label{app:nonsmooth}

In this section, we explain briefly how our results extend to the case of convex nonsmooth cost functions, considering subdifferential instead of gradients of convex costs. 

The essence of the proofs are roughly the same as in the smooth case, but considering subdifferentials requires some additional technical arguments. The full proofs can be found in \cite{jacquot2018nonsmooth}. The authors decided to formulate the results in the smooth case so that the key arguments and ideas appear clearly.

\medskip

Recall that the  \emph{subdifferential}, i.e. set of \emph{subgradients} of a convex function $f$ at $\xx\in \rit^T$ in its domain $C$, which is a convex set in $\rit^T$, is denoted by $\partial f(\xx)$. 
Recall that  $\g\in \rit^{T}$ is a subgradient of $f$ at $\xx$, denoted $\g \in \partial f(\xx) $, iff for all $\zz\in C$, $f(\zz) \geq f(\xx) + \langle \g, \zz-\xx\rangle$.

\bigskip
One has to consider the correspondence of subdifferential  $\H: L^2([0,1],\M) \rightrightarrows L^2([0,1],\rit^T)$, which associate to each profile $\xx \in L^2([0,1],\M)$ and each player $\th$ the set of subgradients of her cost functions: 
\begin{equation}\label{eq:correspNonatom}
\H(\xx) \eqd \{ \g= (\g_\th)_{\th\in\Theta}\  | \ \g_\th \in \partial_1 f_\th(\xx_\th, \txt\int \xx), \   \forall a.e. {\th\in \Theta}  \}  \, ,\quad \forall \xx\in L^2([0,1],\M) .
\end{equation}
In other words, $\H(\xx)$ is the collection of measurable (and integrable because of \Cref{ass_ut_nonat}.iii adapted to assume uniform boundedness of $\partial_1 f_\th$ ) selections of a subgradient for each $\xx_\th$.
Most of the paper can be interpreted in the nonsmooth framework by 
\begin{itemize}[wide]
\item replacing $\g_{\xx}$ by an element of $\H(\xx)$ in the equations, 
\item considering the Hausdorff distance $d_H$ between subdifferentials instead of the Euclidean distance between two gradients (e.g. for $\duti$ in  \Cref{def:approx_seq} of a sequence of finite-type approximating games),
\item a direct implication is that we have to consider \emph{generalized } variational inequalities (GVI), finite or infinite-dimensional, instead of VIs.

 For instance, $\langle \g_{\sxx} , \xx^* - \xx \rangle \leq 0 , \forall \xx \in \FX$ becomes $\exists \g \in \H(\sxx) , \ \langle \g , \xx^* - \xx \rangle \leq 0 , \forall \xx \in \FX$.
\end{itemize}
It is useful to introduce the best-reply correspondence $Br: \Sxag \rightrightarrows \FX$:%
\begin{equation*}
Br(\yyag) \eqd \{ \xx\in \FX : \xx_\th \in \txt\arg\min_{\X_\th} f_\th(\cdot, \yyag), \forall \th\in \Theta\}, \quad \forall\, \yyag\in \Sxag \ , 
\end{equation*}
and, for $\yyag\in \Sxag$ and $\xx\in Br(\yyag)$, the correspondence $\D({\xx,\yyag})$: $\Theta \rightarrow \rit^T$ defined by:
\begin{equation*}
\D({\xx,\yyag}) (\th)\eqd \{ \g_\th \in \partial_1 f_\th(\xx_\th,\yyag) \ | \  \langle \g_\th , \zz_\th - \xx_\th \rangle \geq 0\, , \quad\forall \zz_\th \in \X_\th \} \ \,\  \forall \th\in \Theta
\end{equation*}
which is nonempty (by first order conditions) and closed-valued. To get similar results as in the smooth case, we need to make the following additional assumption:
  \begin{assumption}\label{assp:measurSubdiffCorresp}
 For all $\yyag\in \Sxag$ and all $\xx\in Br(\yyag)$, $\D({\xx,\yyag})$ is a measurable correspondence. 
 \end{assumption}
 One can show that $\H(.)$ and $Br$ have  nonempty values. Then, instead of \Cref{lm:bestreply}, we use the compact-valued selection theorem  \cite{aumann1976integration} to obtain for each $\xx\in Br(\yyag)$ the existence of a measurable mapping $\th \mapsto \g_{\xx}(\th)$ such that $\forall \th\in \Theta, \g_{\xx} (\th) \in \D({\xx,\yyag})(\th)$.
 
 We can then obtain a characterization of WE similar to \Cref{thm:agg_wardrop}, where the differential $\g_{\xx^*}$ is replaced by the existence of an element in $\H(\xx^*)$. 
 The existence result in \Cref{thm:exist_we} is also valid in the subdifferentiable case.
 
 \medskip
 
The monotonicity of $G$ is defined as in \Cref{def:mono_non}, where the inequalities on $\g_\x$ and $\g_\y$ now have to hold for each pair of elements of the correspondences $(\g_{\xx},\g_{\yy}) \in \H(\xx) \times \H(\yy)$.
Properties  characterizing monotonicity given in \Cref{prop:monotonemap} follow with essentially the same proof, having in mind that for any $\th$, $\partial_1 f_\th(\xx_\th, \yyag)=\{\cc(\yyag)+\g:\g\in \partial (-u_\th)(\xx_\th) \}$.

  \medskip
  
In presence of coupling constraints, a VWE (\Cref{def:ve-infinite}) is also defined by $\xx^* \in \FX(A)$ and the existence of an element $\g\in\H(\x^*)$ satisfying the infinite dimensional GVI  \eqref{cond:ind_opt_ve_inf}. 

The existence of a VWE (similar to \Cref{th:exist_ve_inf}) can also be obtained in the nonsmooth case, where the continuity of the gradient is replaced by upper-hemicontinuity of the correspondence $\H$ and applying results of \cite[Corollary 2.1]{ding1996monotoneGVI}.
 The uniqueness conditions associated to monotonicity detailed in \Cref{th:unique_vwe} follow as well with essentially the same proof as for the smooth case.%
 
\medskip

The main result, \Cref{thm:converge_with_u}, is obtained for the nonsmooth case with the same bounds on the convergence rate.

\newpage

\section{On the Relationship between Nash and Wardrop Equilibria}
\label{app:nash-wardrop}
The objective of this paper is to approximate the equilibrium of a nonatomic game with an infinity of players types, by considering approximating games with a finite number of players types.

A natural idea would also be to consider approximating games with a  finite number of \emph{players}, this number of players growing to infinity to approximate the nonatomic population game. 

Indeed, we briefly explain in this appendix how we can obtain similar convergence results by adopting this approach considering finite-player atomic games. 
The approach is fully developed in \cite{jacquot2018nonsmooth}.

\bigskip

The main difference and difficulty under this approach is that the equilibrium concept to consider for finite-player games is no longer  Wardrop Equilibrium, but Nash Equilibrium (NE). 
As the number of players is finite, an individual action $\xx_i
$ of a player $i$ does have an impact on the aggregate action $\xxag= \sum_i \xx_i$. 

As a result, the modified cost function $\hf_\i:(\xx_\i, \xxag_{-\i})\mapsto f_i(\xx_i,\xxag_{-i}+\xx_i)$ naturally appears, where the action of $\i$ is taken into account in the aggregate action.

This modified cost function, and the impact of individual actions in general, have to be considered  both in the assumptions and in the definitions of the different concepts used in this paper.

Nash equilibrium are naturally characterized by finite-dimensional variational inequalities under convexity hypotheses: in the atomic case, we need the additional following assumption:
\begin{assumption}\label{ass:convex_NE}
For an atomic game  $\GA=(\I, (f_i)_i,(\X_i)_i,A)$ with a finite set of players $\I$ and cost functions $(f_i)_i$, the associated functions $\big(\hf_i(.,\xxag)\big)_i$ are convex. 
\end{assumption}
Note that this convexity is not necessarily implied by the convexity of $f_i(.,\xxag)$. Under this additional assumption, we obtain a GVI (where G stands for \emph{generalized} in the nonsmooth case, see \ref{app:nonsmooth}) characterization of NE, similar to the one for SVWE given in \Cref{prop:SVWEfiniteChar}, and an existence result:

\begin{definition}[Variational Nash Equilibrium (VNE), \cite{harker1991gne}]\label{def:ve-finite-nonsmooth} A (variational) Nash equilibrium of  atomic game $\GA$  is a solution to the following GVI problem: 
 \begin{equation}\label{cond:ind_opt_ve-nonsmooth}
\text{Find } \hxx\in \FX(A) \text{  s.t. } \exists\, \g\in H(\hxx)  \text{ s.t. }   \txt \big\langle \g, \xx- \hxx\big\rangle\geq 0,\; \forall \xx \in \FX(A).
 \end{equation}
 where  the subgradients correspondence $H:\FX\rightrightarrows \rit^{IT}$ is given as:
\begin{equation*}
\forall \xx\in \FX, \; H(\xx) \eqd\{(\g_i)_{i\in \I}\in \rit^{IT}: \g_i \in \partial_1 \hf_\i(\xx_\i, \xxag_{-\i}) , \  \forall i\in \I\} = \prod_{i\in\I} \partial_1 \hf_\i(\xx_\i, \xxag_{-\i}) \ .
\end{equation*}
 In particular, if  $\Sxag\subset A$,  a VNE is a NE .
\end{definition}

\begin{proposition}[Existence of VNE]\label{prop:exist_ve-nonsmooth}
  Under \Cref{ass_X_nonat,ass_ut_nonat} (compacity and convexity) on $(\X_i)_i$ and \Cref{ass:convex_NE}, the atomic game $\GA=(\I, (f_i)_i,(\X_i)_i,A)$ admits a VNE. 
\end{proposition}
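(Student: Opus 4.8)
The plan is to recast the defining relation \eqref{cond:ind_opt_ve-nonsmooth} as a finite-dimensional Stampacchia generalized variational inequality (GVI) on the compact convex set $\FX(A)\subset\rit^{IT}$ and to invoke a standard set-valued GVI existence theorem, just as \Cref{prop:exist_svwe} reduced the SVWE problem to a finite-dimensional VI (here the finite number of players makes the measurability apparatus of \Cref{prop:exist_svwe} unnecessary). Concretely, I would rely on the classical fact that if $K\subset\rit^n$ is nonempty, convex and compact and $H\colon K\rightrightarrows\rit^n$ is upper hemicontinuous with nonempty, convex, compact values, then there exist $\hxx\in K$ and $\g\in H(\hxx)$ with $\langle\g,\xx-\hxx\rangle\geq 0$ for all $\xx\in K$ (provable via Kakutani's fixed-point theorem or a Ky~Fan/KKM argument). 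It then remains only to verify that $K=\FX(A)$ and $H=\prod_{i\in\I}\partial_1\hf_i(\cdot,\cdot)$ meet these hypotheses.

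First I would check that $\FX(A)=\{\xx\in\prod_{i\in\I}\X_i:\sum_{i\in\I}\xx_i\in A\}$ is nonempty, convex and compact. Convexity and compactness follow from \Cref{ass_X_nonat} (each $\X_i$ is convex and compact), the convexity and compactness of $A$, and the continuity of the linear aggregation map $\xx\mapsto\sum_{i\in\I}\xx_i$, which exhibits $\FX(A)$ as the intersection of the compact convex product $\prod_{i\in\I}\X_i$ with the closed convex preimage of $A$. Nonemptiness is exactly the standing requirement $A\cap\Sxag\neq\emptyset$, where $\Sxag=\{\sum_{i\in\I}\xx_i:\xx\in\prod_{i\in\I}\X_i\}$ is the (convex, compact) Minkowski sum of the action sets.

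Second I would establish the three properties of $H$. Since $H$ is a product over the finitely many players, it suffices to treat each factor $\xx\mapsto\partial_1\hf_i(\xx_i,\xxag_{-i})$. Under \Cref{ass:convex_NE} the modified cost $\hf_i(\cdot,\xxag_{-i})$ is convex, so its subdifferential is nonempty and convex; it is compact because the uniform Lipschitz bound of \Cref{ass_ut_nonat}.iii), carried over to $\hf_i$, bounds the subgradients, while closedness is built into the definition of the subdifferential. Upper hemicontinuity is the crux and the main obstacle: I would obtain it through the closed-graph characterization of the convex subdifferential. Writing $\hf_i(\xx_i,\xxag_{-i})=f_i(\xx_i,\xxag_{-i}+\xx_i)$, the map $\xx\mapsto(\xx_i,\xxag_{-i})$ is linear hence continuous, and the parametrized subdifferential of a jointly continuous, convex-in-its-first-argument function has closed graph; combined with the local boundedness supplied by the uniform Lipschitz bound, closed graph plus local boundedness yields upper hemicontinuity. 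Taking the finite product preserves all three properties.

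The delicate point, and precisely where \Cref{ass:convex_NE} is indispensable, is this subdifferential step: as already noted after the statement of \Cref{ass:convex_NE}, the convexity of $f_i(\cdot,\xxag)$ alone does not guarantee convexity of $\hf_i(\cdot,\xxag_{-i})$, so without \Cref{ass:convex_NE} the values $\partial_1\hf_i$ need not be convex and the GVI machinery would break down. Once upper hemicontinuity and the nonempty-convex-compact-valued property are secured, the cited GVI existence theorem furnishes a pair $(\hxx,\g)$ with $\g\in H(\hxx)$ solving \eqref{cond:ind_opt_ve-nonsmooth}, which is by definition a VNE, completing the proof.
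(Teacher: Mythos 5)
Your proof is correct and follows the same route the paper takes for its analogous existence results: reduce to a finite-dimensional (generalized) variational inequality on the nonempty, convex, compact set $\FX(A)$ and invoke a standard existence theorem for upper hemicontinuous maps with nonempty convex compact values — the set-valued counterpart of the Hartman--Stampacchia lemma used for \Cref{prop:exist_svwe}, with your closed-graph argument for $\xx\mapsto\partial_1\hf_i(\xx_i,\xxag_{-i})$ mirroring the continuity argument in the proof of \Cref{th:exist_ve_inf}. Note that the paper states \Cref{prop:exist_ve-nonsmooth} without proof (deferring to its companion reference), so the comparison is necessarily with these analogous proofs; your verification of the hypotheses, including the observation that \Cref{ass:convex_NE} is what makes the values of $H$ convex, is sound.
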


To obtain a convergence result of the (V)NEs in a sequence of atomic games, we  need some stronger properties than for the sequence of finite-type approximating games.  In addition to  \Cref{def:approx_seq}, we assume that:
\begin{enumerate}
\item the number of players tends to infinity:  $I\esnu \underset{\snu \rightarrow \infty}{\longrightarrow}  \infty$; \
\item each player becomes infinitesimal: $\mu_i= \mu(\Theta_i)\underset{\snu \rightarrow \infty}{\longrightarrow}  0 $ ;\
\item in the gradient (or subdifferential) of a player, the impact of her own action on the aggregate profile vanishes along the sequence, by considering the additional parameter (given in the subdifferential case):
\begin{equation}\label{eq:def_ld}
\ld\esnu_\i \eqd \sup_{(\xx,\yyag)\in \M^2} \sup_{\g\in \partial_1 \hf\esnu_\i(\mu\esnu_\i\xx, \yyag-\mu\esnu_\i\xx)}d \left(\g, \partial_1 f\esnu_\i(\mu\esnu_\i\xx, \yyag)\right)\ \underset{\snu \rightarrow \infty}{\longrightarrow} 0
\end{equation}
\end{enumerate}

\medskip
Then one obtains similar convergence result as \Cref{thm:converge_with_u}, with the only difference being that:
\begin{itemize}
\item in the upper bound, $\mduti\esnu$ is replaced by $(\mduti\esnu + \mld\esnu)$,  where $\mld\esnu \eqd \max_i \ld\esnu_i)$,
\item for the convergence in $\L_2([0,1],\M)$, one has to consider a projection of the (V)NE on $\L_2([0,1],\M)$, where each  $\theta \in \Theta_i$ is associated to the action of $i\in\I$.
\end{itemize}

\bigskip

Note that this approach has also another interest, as the convergence theorem that we obtain in this case can be interpreted in the reverse way: under the right assumptions,  in a sequence of atomic games converging (in the sense given by the definition of  finite-type approximating games) to a nonatomic aggregative game, the sequence of Nash equilibria converge to an equilibrium (the SVWE) of this limit nonatomic game.

In some particular cases where the continuous SVWE can be computed explicitly as a function $\xx:\Theta \rightarrow \M$ (for instance in the example derived in \Cref{sec:example_energy}), this would give an approximation of a Nash equilibrium in the atomic  (reality) aggregative game, as the finite number of atomic players is very large.
\end{appendix}

\bibliographystyle{ims}

\bibliography{../../../bib/longJournalNames,../../../bib/biblio1,../../../bib/biblio2,../../../bib/biblio3,../../../bib/biblioBooks}

\end{document}